\def\refer#1{~\ref{#1}}
\def\ccite#1{~\cite{#1}}
\def\inte#1{
\displaystyle\mathop{#1\kern0pt}^\circ }
\let\pa=\partial
\let\d=\delta
\let\e=\varepsilon
\let\r=\rho
\let\s=\sigma
\let\f=\frac
\let\p=\psi
\let\D=\Delta
\let\Om=\Omega
\def\cC{{\mathcal C}}
\def\cF{{\mathcal F}}
\def\cS{{\mathcal S}}
\def\pa{\partial}
\def\grad{\nabla}
\def\virgp{\raise 2pt\hbox{,}}
\def\cdotpv{\raise 2pt\hbox{;}}
\def\eqdefa{\buildrel\hbox{\footnotesize def}\over =}
\def\C{\mathop{\mathbb C\kern 0pt}\nolimits}
\def\DD{\mathop{\mathbb D\kern 0pt}\nolimits}
\def\EE{\mathop{{\mathbb E \kern 0pt}}\nolimits}
\def\K{\mathop{\mathbb K\kern 0pt}\nolimits}
\def\N{\mathop{\mathbb N\kern 0pt}\nolimits}
\def\Q{\mathop{\mathbb Q\kern 0pt}\nolimits}
\def\R{\mathop{\mathbb R\kern 0pt}\nolimits}
\def\SS{\mathop{\mathbb S\kern 0pt}\nolimits}
\def\ZZ{\mathop{\mathbb Z\kern 0pt}\nolimits}
\def\TT{\mathop{\mathbb T\kern 0pt}\nolimits}
\def\P{\mathop{\mathbb P\kern 0pt}\nolimits}
\newcommand{\Z}{{\ZZ}}
\def\dv{\mbox{div}}
\def\dive{\mathop{\rm div}\nolimits}
\def\curl{\mathop{\rm curl}\nolimits}
\def\Supp{\mathop{\rm Supp}\nolimits\ }
\def\no{\noindent}
\def\na{\nabla}
\def\p{\partial}
\newcommand{\beq}{\begin{equation}}
\newcommand{\eeq}{\end{equation}}
\newcommand{\ben}{\begin{eqnarray}}
\newcommand{\een}{\end{eqnarray}}
\newcommand{\beno}{\begin{eqnarray*}}
\newcommand{\eeno}{\end{eqnarray*}}
\newcommand{\andf}{\quad\hbox{and}\quad}
\newcommand{\with}{\quad\hbox{with}\quad}
\newtheorem{defi}{Definition}[section]
\newtheorem{thm}{Theorem}[section]
\newtheorem{lem}{Lemma}[section]
\newtheorem{rmk}{Remark}[section]
\newtheorem{col}{Corollary}[section]
\newtheorem{prop}{Proposition}[section]
\renewcommand{\theequation}{\thesection.\arabic{equation}}
\begin{document}
\title[Well-posedness of $3-$D MHD equations]
{Global well-posedness of $3-$D  density-dependent incompressible MHD equations with variable resistivity}

\bigbreak\medbreak
\author[H.  Abidi]{Hammadi Abidi}
\address[H.  Abidi]{D\'epartement de Math\'ematiques
Facult\'e des Sciences de Tunis
Universit\'e de Tunis EI Manar
2092
Tunis
Tunisia}\email{hammadi.abidi@fst.utm.tn}
\author[G. Gui]{Guilong Gui}
\address[G. Gui]{ School of Mathematics and Computational Science, Xiangtan University,  Xiangtan 411105,  China}\email{glgui@amss.ac.cn}
\author[P. Zhang]{Ping Zhang}
\address[P. Zhang]{Academy of Mathematics $\&$ Systems Science, The Chinese Academy of Sciences, Beijing 100190, China;\\
School of Mathematical Sciences, University of Chinese Academy of Sciences,
Beijing 100049, China} \email{zp@amss.ac.cn}

\setcounter{equation}{0}
\date{}

\maketitle
\begin{abstract}
In this paper, we investigate the global existence of weak solutions to 3-D inhomogeneous incompressible MHD equations with variable viscosity and resistivity, which is sufficiently close to $1$ in $L^\infty(\mathbb{R}^3),$  provided that the initial density is bounded from above and below by positive constants,  and both the initial velocity and  magnetic field are small enough in the critical space $\dot{H}^{\frac{1}{2}}(\mathbb{R}^3).$ Furthermore, if we assume in addition that the  kinematic viscosity equals $1,$ and both the initial velocity and  magnetic field belong to $\dot{B}^{\frac{1}{2}}_{2,1}(\mathbb{R}^3),$ we can also prove the uniqueness of such solution.
\end{abstract}

\noindent {\sl Keywords:} Inhomogeneous  MHD systems, Littlewood-Paley Theory, Critical spaces

\vskip 0.2cm

\noindent {\sl AMS Subject Classification (2000):} 35Q30, 76D03  \\

\renewcommand{\theequation}{\thesection.\arabic{equation}}
\setcounter{equation}{0}

\renewcommand{\theequation}{\thesection.\arabic{equation}}
\setcounter{equation}{0}

\section{Introduction}
In this paper, we investigate  the global well-posedness of the following $3$-D inhomogeneous incompressible magnetohydrodynamics (MHD) equations:
\begin{equation}\label{1.2}
\begin{cases}
\displaystyle\partial_t\rho+\dv(\rho u)=0 \quad \mbox{in}\, \ \mathbb{R}^+\times \mathbb{R}^3,\\
\displaystyle\partial_t (\rho u)
+{\mathop{\rm div}}(\rho u\otimes u)
-\,2\dv\big(\mu(\rho)d\big)
+\nabla\Pi=(B\cdot\na)B,\\
\displaystyle\partial_t B+u\cdot\nabla B
+\curl\bigl(\sigma(\rho)\curl\, B\bigr)
=(B\cdot\nabla) u,\\
\displaystyle\dv\, u=\dv\, B=0,\\
\displaystyle(\rho,u,B)|_{t=0}=(\rho_0,u_0,B_0),
\end{cases}
\end{equation}
where $\rho$ and $u$ denote the density and  velocity  of the fluid, $B$ the magnetic field, and $\Pi=\pi +\frac{|B|^2}{2}$ with $\pi$ standing for
 the scalar pressure function of the fluid,  $d=d(u)=(d_{ij})_{3\times3},$ with $d_{ij}\eqdefa \frac{1}{2}(\partial_i u^j+\partial_ju^i),$
 designates the stress tensor, $\mu(\rho)>0$ and $\sigma(\rho)>0$ are the kinematic viscosity   and  the resistivity (the reciprocal of conductivity) of the fluid.

When the magnetic field $B=0$ in \eqref{1.2}, the system is reduced to the classical inhomogeneous incompressible Navier-Stokes equations
with variable viscosity \cite{LP1996}, which we denote by $(INS)$ below. The system \eqref{1.2} is a coupled system of the incompressible inhomogeneous Navier-Stokes equations with Maxwell's equations of electromagnetism, where the displacement current can be neglected \cite{ku-ly, lau-li}. The dependence of $\mu$ and $\sigma $ on $\rho$ in \eqref{1.2} enables us to consider the density-dependent equations as a model of a multi-phase flow consisting of several immiscible fluids with various viscosities and conductivities and without surface tension in presence of a magnetic field \cite{ger-br}.

Just as  $(INS),$  the system \eqref{1.2}  has
the following scaling-invariant property: if $(\rho, u, B)$ solves \eqref{1.2} with initial data $(\rho_0, u_0, B_0)$, then for any $\ell>0$,
\begin{equation}\label{S1eq1}
(\rho, u, B)_{\ell}(t, x) \eqdefa (\rho(\ell^2\cdot, \ell\cdot), \ell
u(\ell^2 \cdot, \ell\cdot), \ell
B(\ell^2 \cdot, \ell\cdot))
\end{equation}
is also a solution of \eqref{1.2} with initial data $(\rho_0(\ell\cdot),\ell
u_0(\ell\cdot), \ell
B_0(\ell\cdot))$. We call such functional spaces as critical spaces if the norms of
which are invariant under the scaling transformation \eqref{S1eq1}.

For the system $(INS)$ with constant viscosity, Lady\v zenskaja and Solonnikov  \ccite{LS}
first   considered the system
in bounded domain $\Om$ with homogeneous Dirichlet boundary
condition for $u.$ Under the assumption that $u_0$ belongs to~$
W^{2-\frac2p,p}(\Om)$ with $p$ greater than~$d$,  is divergence free and vanishes on
$\p\Om$ and that $\r_0$ is~$C^1(\Om)$,  bounded and  away from zero, then
they  proved
\begin{itemize}
\item Global well-posedness in dimension $d=2;$
\item Local well-posedness in dimension $d=3.$ If in addition $u_0$ is small in $W^{2-\frac2p,p}(\Om),$
then global well-posedness holds true.
\end{itemize}
Based on the energy law, Kazhikov \cite{KA1974} proved  that this system has a global weak solution in the energy space provided that the initial density is bounded from above and away from vacuum. For the system $(INS)$ with variable viscosity, Lions \cite{LP1996} proved the global existence of weak solutions with finite energy. Yet the uniqueness and regularities of such weak solutions are big open questions even in two space dimensions, as was mentioned by Lions in \cite{LP1996} (see pages 31-32 of \cite{LP1996}).

 In the critical framework for the system $(INS)$ with constant viscosity, under the smallness assumptions of $\rho_0-1$ and $u_0$,
  after the works \cite{A,A-P,DAN-03}, Danchin and Mucha \cite{DM1} eventually proved the global well-posedness of the system with initial density being close enough to a positive constant in the multiplier space of $\dot B^{-1+\f{d}p}_{p,1}(\R^d)$  and initial velocity being small enough in
  $\dot B^{-1+\f{d}p}_{p,1}(\R^d)$  for $1<p<2d.$ The work of \cite{A-G-Z-2} is the first to investigate the global well-posedness of the system with initial data in the critical spaces and yet without the size restriction on the initial density. The third author of this paper \cite{Zhang2020} proved the global existence of strong solutions to the system $(INS)$ with initial density being bounded from above and below by positive constants, and with initial velocity being sufficiently small in the critical Besov space $\dot{B}^{\frac{1}{2}}_{2, 1}(\mathbb{R}^3)$. This solution corresponds to the Fujita-Kato solution of the classical Navier-Stokes equations. The uniqueness of such solution was proved lately by Danchin and Wang \cite{DW2023}. Based on the improved uniqueness theorem and motivated by \cite{Zhang2020}, Hao et al. \cite{HSWZ2024} proved the global  existence of unique solution to the system $(INS)$ with  bounded initial density and initial velocity being sufficiently small in $\dot{H}^{\frac{1}{2}}(\mathbb{R}^3)$. More recently, we \cite{AGZ6-2024} proved that  3-D inhomogeneous incompressible Navier-Stokes equations has a unique global Fujita-Kato solution  if the initial velocity is sufficiently small in $\dot{B}^{\frac{1}{2}}_{2, \infty}(\R^3)$. One may check \cite{AGZ6-2024,HSWZ2024} and references therein for the most recent  progresses in this direction.

For the system $(INS)$ with variable viscosity, the problem turns out to be very difficult, there are only a few well-posedness results.
Under the additional assumptions that $\|\mu(\rho_0)-1\|_{L^\infty(\mathbb{T}^2)}\leq \varepsilon$  and  $u_0\in H^1(\mathbb{T}^2)$ for small $\varepsilon>0$, Desjardins \cite{des1997} proved that the global weak solution $(\rho, u, \nabla\Pi)$ constructed  in \cite{LP1996} satisfies $u \in L^{\infty}([0, T]; H^1(\mathbb{T}^2))$ for any $T>0.$ The first and third authors of this paper \cite{AZ1} improved the regularities of the solutions in \cite{des1997} and proved the uniqueness of such solution under additional regularity assumption on the initial density. They \cite{AZ2015-1} also established the global well-posedness of the 3-D incompressible inhomogeneous Navier-Stokes system with variable viscosity provided that the initial data $(\rho_0, u_0)$ satisfies $0<c_0\leq \rho_0\leq C_0$, $\rho_0-1 \in L^2\cap \dot{W}^{1, r}$, $u_0 \in \dot{H}^{-2\delta}$, $\d\in ]{1}/{4}, {1}/{2}[$, $r \in \bigl]6, \frac{3}{1-2\delta}\big[$, and $\|\mu(\rho_0)-1\|_{L^\infty}+\|u_0\|_{L^2}\|\nabla{u}_0\|_{L^2}$ small enough.

On the other hand,
there have been a lot of studies on magnetohydrodynamics by physicists and mathematicians because of their prominent roles in modeling many phenomena in astrophysics, geophysics and plasma physics, see  for instance \cite{abidi-h, cao-wu, des-br, du-lions, ger-br, huang-w, lin-zh, ser-te} and the references therein.

For the inhomogeneous incompressible MHD equations \eqref{1.2},  Gerbeau and Le Bris \cite{ger-br} (see also   Desjardins and  Le Bris \cite{des-br}) established the global existence of weak solutions to this system with  finite energy  in the whole
space $\mathbb{R}^3$ or in the torus $\mathbb{T}^3.$ Under the assumptions that both conductivity and viscosity are constants, Huang and Wang \cite{huang-w} demonstrated  the global existence of strong solutions to the 2-D  inhomogeneous incompressible MHD equations \eqref{1.2} with smooth initial data. The second author of this paper \cite{gui2014} proved that  2-D incompressible inhomogeneous MHD system \eqref{1.2} with  constant viscosity is globally well-posed for a generic family of the variations of the initial data and an inhomogeneous electrical conductivity.

 Motivated by \cite{DW2023, des1997,Zhang2020}, here we shall focus on the different effect of variations of the viscosity and resistivity to the existence of global unique solution to  the 3-D incompressible inhomogeneous MHD system provided that the initial density is bounded from above and below by positive constants and the initial velocity is sufficiently small in critical space.

In what follows, we shall always assume that
\begin{equation}\label{t.1}
0< m\leq \rho_0(x)\leq M  \quad \forall\,\, x \in \mathbb{R}^3,
\end{equation}
and
\begin{equation}\label{viscosite-conductivite}
0<\underline{\sigma}\leq \sigma(\rho_0),\quad 0<\underline{\mu}\leq\mu(\rho_0), \quad \sigma(\cdot),\,\,\mu(\cdot) \in W^{2, \infty}(\mathbb{R}^+)
\end{equation}
for some positive constants $m, \, M,\,\underline{\sigma},\, \underline{\mu}$.

The main results of this paper state as follows:

\begin{thm}\label{thm-GWS-MHD}
{\sl  Let $\rho_0$ and $\s(\rho_0), \mu(\rho_0)$ satisfy \eqref{t.1}-\eqref{viscosite-conductivite}, let
$(u_0,B_0)\in \dot{H}^{\frac{1}{2}}\times \dot{H}^{\frac{1}{2}}$ with $\dive\,u_0=\dv\,B_0=0.$
Then there exist positive constants $\frak{c}$ and $\varepsilon_0$ depending only on $m,\,M$ such that if
\begin{equation}\label{small-data-1}
\|(u_0,B_0)\|_{\dot{H}^{\frac{1}{2}}}\leq \frak{c}, \quad\|\mu(\rho_0)-1\|_{L^\infty}+\|\sigma(\rho_0)-1\|_{L^\infty}
\le
\varepsilon_0,
\end{equation}
the system \eqref{1.2} has a global  solution $(\rho, \, u,\, B,\, \nabla\Pi)$ with $\rho\in C_{\rm w}([0,\infty); L^{\infty})$ and $(u, B) \in (C([0, +\infty); \dot{H}^{\frac{1}{2}})\cap L^4(\mathbb{R}^+; \dot{H}^{1}))^2,$ which satisfies
\begin{equation}\label{bdd-density-visc-1}
0< m\leq \rho(t, x)\leq M, \quad 0<\underline{\sigma}\leq \sigma(\rho),\quad 0<\underline{\mu}\leq\mu(\rho) \quad \forall\, (t, x) \in \mathbb{R}^+\times \mathbb{R}^3,
\end{equation}
and
\begin{equation}\label{est-variable-2}
\begin{split}
&\|(u,B)\|_{\widetilde L^\infty_T(\dot H^{\frac{1}{2}})}+\|(u,B)\|_{L^\infty_T(L^3)}
+\|(\nabla u,\nabla B)\|_{L^4_T(L^2)}
+\|{t}^{-\frac{1}{4}}(\nabla u,\nabla B)\|_{L^2_T(L^2)}
\\&+\|{t}^{\frac{1}{4}}(\nabla u,\nabla B)\|_{L^\infty_T(L^2)}
+\bigl\|{t}^{\frac{1}{4}}
\bigl(u_t, B_t, \nabla\Pi-2\dv(\mu(\rho)d),\curl\left(\sigma(\rho)\curl\, B)\right)\bigr\|_{L^2_T(L^2)}
\\&
+\|{t}^{\frac{1}{4}}(\nabla u,\nabla B)\|_{L^2_T(L^6)}+\|(\nabla u,\nabla B)\|_{L^2_T(L^3)} +\|(u,B)\|_{L^2_T(L^\infty)}
\lesssim
\|(u_0,B_0)\|_{\dot H^{\frac{1}{2}}}.
\end{split}
\end{equation}
}
\end{thm}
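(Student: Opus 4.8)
The plan is to construct the global solution by a standard-but-delicate three-step scheme: a priori estimates on an approximate system, passage to the limit, and (eventually, in the second theorem) uniqueness. The starting point is the basic energy identity: multiplying the momentum equation by $u$ and the magnetic equation by $B$, integrating over $\R^3$ and using $\dv u=\dv B=0$, one gets the conservation law
\beno
\f12\f{d}{dt}\bigl(\|\sqrt{\rho}\,u\|_{L^2}^2+\|B\|_{L^2}^2\bigr)+2\int\mu(\rho)|d(u)|^2\,dx+\int\sigma(\rho)|\curl B|^2\,dx=0,
\eeno
which together with \eqref{viscosite-conductivite} gives $(u,B)\in L^\infty(\R^+;L^2)\cap L^2(\R^+;\dot H^1)$. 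The heart of the argument is the propagation of the critical $\dot H^{1/2}$ regularity with smallness. First I would rewrite the system as a perturbation of the constant-coefficient Stokes/heat system: set $\mu(\rho)=1+(\mu(\rho)-1)$, $\sigma(\rho)=1+(\sigma(\rho)-1)$, so that
\beno
\rho\,\pa_t u-\Delta u+\nabla\Pi=-\rho\,u\cdot\nabla u+(B\cdot\nabla)B+2\dv\bigl((\mu(\rho)-1)d(u)\bigr),
\eeno
and similarly $\pa_t B-\Delta B=-u\cdot\nabla B+(B\cdot\nabla)u-\curl\bigl((\sigma(\rho)-1)\curl B\bigr)$. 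Applying the $\dot H^{1/2}$ energy estimate (equivalently, a Littlewood-Paley decomposition and summation in $\ell^2$) and exploiting the smallness of $\|\mu(\rho_0)-1\|_{L^\infty}+\|\sigma(\rho_0)-1\|_{L^\infty}\le\varepsilon_0$ — which persists for all time because $\mu(\rho),\sigma(\rho)$ are transported along the flow of the divergence-free field $u$ — one absorbs the variable-coefficient terms into the dissipation. The nonlinear terms $\rho u\cdot\nabla u$, $(B\cdot\nabla)B$, $u\cdot\nabla B$, $(B\cdot\nabla)u$ are all controlled in $\dot H^{-1/2}$ by the product law $\dot H^{1/2}\times\dot H^{1/2}\hookrightarrow \dot H^{0}\hookrightarrow\dot H^{-1/2}$-type estimates together with $L^4_T(\dot H^1)$, yielding a closed inequality of the form $Y(T)\lesssim \|(u_0,B_0)\|_{\dot H^{1/2}}+Y(T)^2+\varepsilon_0 Y(T)$ for the norm $Y(T)$ collecting $\|(u,B)\|_{\widetilde L^\infty_T(\dot H^{1/2})}+\|(\nabla u,\nabla B)\|_{L^4_T(L^2)}$; the smallness \eqref{small-data-1} then gives $Y(T)\lesssim\|(u_0,B_0)\|_{\dot H^{1/2}}$ uniformly in $T$, which is the first half of \eqref{est-variable-2}. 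The interpolation embedding $\dot H^{1/2}\cap\dot H^1$ into $L^3$ and $L^\infty$-type spaces, plus the usual maximal-regularity smoothing for the heat/Stokes operator, upgrades this to the weighted-in-time bounds ($t^{1/4}$, $t^{-1/4}$ weights) and the estimates on $u_t,B_t,\nabla\Pi$ etc.\ appearing in the remaining lines of \eqref{est-variable-2} — this is a time-weighted bootstrap exploiting the instantaneous smoothing, exactly as in \cite{Zhang2020,HSWZ2024}.

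For the construction itself I would regularize: smooth the initial data, replace $\rho_0$ by a mollified density, and solve the approximate system (say by a Friedrichs/Galerkin scheme or by the iteration scheme of \cite{Zhang2020}), obtaining for each approximation the same a priori bounds with constants independent of the regularization parameter. The transport equation for $\rho$ with the divergence-free velocity preserves \eqref{bdd-density-visc-1} and the $L^\infty$ bounds on $\mu(\rho),\sigma(\rho)$ by the maximum principle. Then one passes to the limit: the uniform bounds in \eqref{est-variable-2} give weak-$*$ compactness of $(u,B)$ in the dissipation spaces; the velocity equation gives a uniform bound on $(\pa_t u,\pa_t B)$ in a negative-index space, so by Aubin–Lions $(u,B)$ converges strongly in $L^2_{loc}$; this strong convergence is enough to pass to the limit in all the nonlinear terms $\rho u\otimes u$, $B\otimes B$, $u\otimes B$, and — using that $\rho$ converges weak-$*$ in $L^\infty$ and strongly (via the transport equation and DiPerna–Lions theory, or via the explicit formula along the flow) — in $\mu(\rho)d(u)$ and $\sigma(\rho)\curl B$. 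The time-continuity $(u,B)\in C([0,\infty);\dot H^{1/2})$ and $\rho\in C_w([0,\infty);L^\infty)$ follow from the equations and the uniform estimates by a standard Aubin–Lions/Arzelà–Ascoli argument. The pressure $\nabla\Pi$ is recovered from the momentum equation via the (variable-density) Leray projector, with the stated regularity read off from \eqref{est-variable-2}.

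The main obstacle I expect is the closing of the critical a priori estimate in the presence of the \emph{variable} viscosity and resistivity. Unlike the constant-coefficient case, the terms $2\dv((\mu(\rho)-1)d(u))$ and $\curl((\sigma(\rho)-1)\curl B)$ are second order and cannot simply be treated as lower-order perturbations; one must commute the Littlewood-Paley blocks through the variable coefficient and control the commutators, which typically costs one derivative on $\mu(\rho)$ — yet here we only assume $\mu(\rho_0)-1$ small in $L^\infty$, with no extra Besov regularity. The resolution has to exploit that these are divergence-form terms paired against $u$ (resp.\ $B$) in the energy estimate, so that after integration by parts the "bad" term becomes $\int(\mu(\rho)-1)d(u):\nabla(\dot\Delta_j u)$-type quantities bounded by $\varepsilon_0\|\nabla u\|^2$ in the appropriate time-Lebesgue norm and hence absorbed — the smallness $\varepsilon_0$ is used precisely here, and it must be chosen after the universal constants from the product estimates are fixed. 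A second, more technical obstacle is the simultaneous handling of the coupled terms $(B\cdot\nabla)B$ and $(B\cdot\nabla)u$: one has to estimate the velocity and magnetic fields together, noting the antisymmetric structure $\int(B\cdot\nabla)B\cdot u+\int(B\cdot\nabla)u\cdot B=0$ at the $L^2$ level, and carry an analogous near-cancellation through the $\dot H^{1/2}$ estimate so that the magnetic field does not destabilize the velocity bound. Once these are in place the rest is bookkeeping along the lines already developed in \cite{Zhang2020,HSWZ2024}.
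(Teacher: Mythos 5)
Your overall skeleton (energy law, smallness-driven a priori estimate at the critical level, time-weighted bootstrap, mollification and compactness) matches the paper's, and you correctly identify the central difficulty: the second-order terms $\dv\bigl((\mu(\rho)-1)d(u)\bigr)$ and $\curl\bigl((\sigma(\rho)-1)\curl B\bigr)$ with a coefficient that is merely small in $L^\infty$. But the mechanism you propose to overcome it does not work, and this is a genuine gap. If you apply $\dot\Delta_j$ to the equation and pair with $\dot\Delta_j u$, the term you must absorb is $\sum_j 2^{j}\int \dot\Delta_j\bigl((\mu(\rho)-1)d(u)\bigr):\nabla\dot\Delta_j u\,dx$, and since $\dot\Delta_j$ does not commute with multiplication by $\mu(\rho)-1$, bounding this by $\varepsilon_0\|\nabla u\|_{\dot H^{1/2}}^2$ would require $\mu(\rho)-1$ to be a pointwise multiplier of $\dot H^{1/2}$ — which an $L^\infty$ function with no extra regularity is not. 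The divergence-form/integration-by-parts structure saves you only at the flat $L^2$ level; at the $\dot H^{1/2}$ level the commutator $[\dot\Delta_j,\mu(\rho)-1]$ costs regularity on $\mu(\rho)$ that is not available. So the closed inequality $Y(T)\lesssim\|(u_0,B_0)\|_{\dot H^{1/2}}+Y(T)^2+\varepsilon_0Y(T)$ cannot be derived this way.

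The paper circumvents this by never frequency-localizing the equation: for each $j$ it solves the \emph{linear} system \eqref{model-3d-freq-1} with the full (non-localized) coefficients but with localized data $(\dot\Delta_ju_0,\dot\Delta_jB_0)$, so that $u=\sum_ju_j$ by linearity and uniqueness. Each $u_j$ then carries two tiers of estimates (Lemma \ref{S2lem1}): an $L^2$-energy tier of size $\|\dot\Delta_ju_0\|_{L^2}\sim c_j2^{-j/2}\|u_0\|_{\dot H^{1/2}}$ and an $H^1$-energy tier of size $2^j\|\dot\Delta_ju_0\|_{L^2}\sim c_j2^{j/2}\|u_0\|_{\dot H^{1/2}}$, plus time-weighted variants; the critical norms of the sum are recovered by the bilinear almost-orthogonality summation $\sum_{j,k}\lesssim\sum_k\|\cdot\|_k\sum_{j\le k}\|\cdot\|_j$ in Proposition \ref{prop-vaviable-1}. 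Crucially, the $L^\infty$ smallness of $\mu(\rho_0)-1$ and $\sigma(\rho_0)-1$ enters only in Step~4 of Lemma \ref{S2lem1}, in scale-invariant $L^p$ elliptic estimates of the form $\|\nabla u_j\|_{L^p}\le C\|\mu(\rho_0)-1\|_{L^\infty}\|\nabla u_j\|_{L^p}+\dots$, where multiplication by an $L^\infty$ function \emph{is} bounded and can be absorbed. You would need to replace your direct $\dot H^{1/2}$ paradifferential estimate with this superposition device (or an equivalent one) for the proof to close.
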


We remark that Theorem \ref{thm-GWS-MHD} in particular improves the existence result for 3-D inhomogeneous incompressible Navier-Stokes equations
with variable viscosity in \cite{AZ2015-1} to the critical framework. As far as we know, Theorem \ref{thm-GWS-MHD}  maybe the first existence result for \eqref{1.2} with variable viscosity and resistivity and with initial data being in the critical spaces.

For the case when the kinematic  viscosity  $\mu$ is a positive constant, we can also prove the  uniqueness of such solution
 constructed in Theorem \ref{thm-GWS-MHD} provided that $(u_0,B_0)\in \dot B^{\frac{1}{2}}_{2,1}\times \dot B^{\frac{1}{2}}_{2,1}$.

\begin{thm}\label{mainthm-GWP}
{\sl Under the assumptions of Theorem \ref{thm-GWS-MHD},
the system \eqref{1.2} with $\mu(\rho)=1$  has a global  solution $(\rho, \, u,\, B, \,\nabla\Pi)$ with $\rho\in C_{\rm w}([0,\infty); L^{\infty})$ and $(u, B) \in (C([0, +\infty); \dot{H}^{\frac{1}{2}})\cap L^4(\mathbb{R}^+; \dot{H}^{1}))^2,$ which satisfies \eqref{bdd-density-visc-1}   and
\begin{equation}\label{est-const-basic-0}
\begin{split}
&\|(u,B)\|_{\widetilde{L}^\infty_T(\dot{H}^{\frac{1}{2}})}
+\|{t}^{\frac{1}{4}}\nabla({u},{B})\|_{L^{\infty}_T(L^2)}
+\|(\nabla{u},\nabla{B})\|_{L^4_T(L^2)}+\|{t}^{-\frac{1}{4}}(\nabla{u},\nabla{B})\|_{L^2_T(L^2)}\\
&+\bigl\|{t}^{\frac{1}{4}}\left(u_t, B_t, \nabla^2u,
\nabla\Pi, \curl(\sigma(\rho)\curl{B})\right)\bigr\|_{L^2_T(L^2)}+\|{t}^{\frac{1}{4}}(\nabla u,\nabla B)\|_{L^2_T(L^6)}\\
&+\|(\nabla u,\nabla B)\|_{L^2_T(L^3)} +\|(u,B)\|_{L^2_T(L^\infty)}+\|t^{\frac{3}{4}}\,(u_t, B_t)\|_{L^\infty_t(L^2)}+\|{t}^{\frac{3}{4}}(\nabla u,\nabla B)\|_{L^{\infty}_T(L^6)}\\
&+\|t^{\frac{3}{4}}\,(\nabla u_t,\nabla D_tu,\nabla D_tB)\|_{L^2_T(L^2)}+\|t^{\frac{1}{2}}(u,B)\|_{L^\infty_T(L^\infty)}+\|t^{\frac{1}{2}}(\nabla u, \nabla B)\|_{L^\infty_T(L^3)}\\
&+\|t^{\frac{1}{2}}\,u_t\|_{L^{2}_T(L^3)} +\|t^{\frac{3}{4}}\,(\nabla^2 u, \nabla\Pi)\|_{L^2_T(L^6)}
+\|t^{\frac{1}{2}}\,\nabla u\|_{L^2_T(L^\infty)} \lesssim
\|(u_0,B_0)\|_{\dot H^{\frac{1}{2}}}.
\end{split}
\end{equation}
 Furthermore, if in addition, $(u_0,B_0)\in \dot B^{\frac{1}{2}}_{2,1}\times \dot B^{\frac{1}{2}}_{2,1}$, then the solution is unique  and there holds
\begin{equation}\label{est-const-B21-0}
\begin{split}
&\|(u,B)\|_{\widetilde L^\infty_T(\dot B^{\frac{1}{2}}_{2,1})}
+\|u\|_{\widetilde L^2_T(\dot B^{\frac{3}{2}}_{2,1})}
+\|B\|_{\widetilde L^2_T(\dot B^{\frac{1}{2}}_{6,1})}+\|t^{\frac{1}{2}}(u_t,\,\nabla^2\,u,\nabla\Pi)\|_{L^{4, 1}_T(L^2)}\\
&+\|t\bigl(\nabla^2u,\nabla\Pi\bigr)\|_{L^{4,1}_T(L^6)}+\|(\nabla^2u,\nabla\Pi)\|_{L^1_T(L^3)}
+\|\nabla u\|_{L^1_T(L^\infty)}
\lesssim
\|(u_0,B_0)\|_{\dot B^{\frac{1}{2}}_{2,1}},
\end{split}
\end{equation}
where $L^{4, 1}_T$ denotes Lorentz norm with respect to the time variable (see Definition \ref{espace_lorentz}).
}
\end{thm}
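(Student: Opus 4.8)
\emph{Proof plan.} The existence of a global solution satisfying \eqref{bdd-density-visc-1} is already furnished by Theorem~\ref{thm-GWS-MHD} applied with $\mu(\rho)\equiv1$ (so that $\|\mu(\rho_0)-1\|_{L^\infty}=0$); what remains is to show that this solution enjoys the additional regularity recorded in \eqref{est-const-basic-0}, that it propagates $\dot B^{\frac12}_{2,1}$ regularity in the form \eqref{est-const-B21-0}, and that it is unique. I would proceed in three steps.

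\textbf{Step 1: weighted a priori estimates, proof of \eqref{est-const-basic-0}.} With $\mu(\rho)\equiv1$ and $\dive B=0$, the identity $\curl(\sigma(\rho)\curl B)=-\Delta B+\curl\bigl((\sigma(\rho)-1)\curl B\bigr)$ recasts the momentum and magnetic equations as a Stokes system and a heat equation with right-hand sides,
\begin{equation*}
-\Delta u+\nabla\Pi=-\rho\,D_tu+(B\cdot\nabla)B,\qquad\partial_tB-\Delta B=(B\cdot\nabla)u-u\cdot\nabla B-\curl\bigl((\sigma(\rho)-1)\curl B\bigr),
\end{equation*}
where $D_t\eqdefa\partial_t+u\cdot\nabla$. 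Since $0<m\le\rho\le M$ and $\|\sigma(\rho)-1\|_{L^\infty}\le\varepsilon_0$, the perturbative and nonlinear terms carry a small factor (using also the smallness of $\|(u_0,B_0)\|_{\dot H^{\frac12}}$ and estimate \eqref{est-variable-2} of Theorem~\ref{thm-GWS-MHD}), so one can run a cascade of time-weighted energy estimates. Testing the $u$-equation against $t^{1/2}D_tu$ and the $B$-equation against $t^{1/2}\partial_tB$, integrating by parts, and invoking Stokes maximal regularity, $\|\nabla^2u\|_{L^2}+\|\nabla\Pi\|_{L^2}\lesssim\|\rho D_tu-(B\cdot\nabla)B\|_{L^2}$, yields the control of $t^{1/4}(u_t,B_t,\nabla^2u,\nabla\Pi,\curl(\sigma(\rho)\curl B))$ in $L^2_T(L^2)$ and of $t^{1/4}\nabla(u,B)$ in $L^\infty_T(L^2)$. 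Differentiating the system in time and repeating with the heavier weight $t^{3/4}$ yields the $t^{3/4}$-weighted bounds on $(u_t,B_t)$, on $\nabla(u_t,D_tu,D_tB)$, and, via $L^6$-elliptic regularity, on $t^{3/4}(\nabla^2u,\nabla\Pi)$ in $L^2_T(L^6)$ and $t^{3/4}\nabla(u,B)$ in $L^\infty_T(L^6)$. The remaining intermediate ($t^{1/2}$-weighted) quantities then follow by interpolating between the $t^{1/4}$- and $t^{3/4}$-level bounds and by the embeddings $\dot H^1\hookrightarrow L^6$, $\dot W^{1,3}\hookrightarrow L^\infty$ on $\R^3$.

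\textbf{Step 2: propagation of $\dot B^{\frac12}_{2,1}$ regularity, proof of \eqref{est-const-B21-0}.} Assume now $(u_0,B_0)\in\dot B^{\frac12}_{2,1}$. Applying the dyadic block $\dot\Delta_j$ to the Stokes/heat system of Step~1 and using the $L^1_t$-smoothing of $\partial_t-\Delta$ in Besov spaces, one estimates the nonlinearities $u\cdot\nabla u$, $(B\cdot\nabla)B$, $u\cdot\nabla B$, $(B\cdot\nabla)u$, as well as $(\rho^{-1}-1)(\cdots)$ and $\curl((\sigma(\rho)-1)\curl B)$, by Bony's paraproduct decomposition; each is bounded by a $\dot H^{\frac12}$-level quantity from \eqref{est-const-basic-0} times a small constant and is therefore absorbed. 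This gives $\|(u,B)\|_{\widetilde L^\infty_T(\dot B^{\frac12}_{2,1})}+\|u\|_{\widetilde L^2_T(\dot B^{\frac32}_{2,1})}+\|B\|_{\widetilde L^2_T(\dot B^{\frac12}_{6,1})}\lesssim\|(u_0,B_0)\|_{\dot B^{\frac12}_{2,1}}$, and, via heat smoothing from $\dot B^{\frac12}_{2,1}$ data, also the Lipschitz bound $\|\nabla u\|_{L^1_T(L^\infty)}$ and $\|(\nabla^2u,\nabla\Pi)\|_{L^1_T(L^3)}$. The Lorentz-in-time norms $\|t^{1/2}(u_t,\nabla^2u,\nabla\Pi)\|_{L^{4,1}_T(L^2)}$ and $\|t(\nabla^2u,\nabla\Pi)\|_{L^{4,1}_T(L^6)}$ are obtained by real interpolation between consecutive time-weighted estimates of Step~1 (for instance, between $t^{1/4}u_t\in L^2_T(L^2)$ and $t^{3/4}u_t\in L^\infty_T(L^2)$, noting $(L^2,L^\infty)_{\frac12,1}=L^{4,1}$ with midpoint weight $t^{1/2}$).

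\textbf{Step 3: uniqueness.} Let $(\rho_i,u_i,B_i,\nabla\Pi_i)$, $i=1,2$, be two solutions on $[0,T]$ with the same data, each satisfying \eqref{est-const-basic-0}--\eqref{est-const-B21-0}, and set $\delta\!\rho=\rho_1-\rho_2$, $\delta\!u=u_1-u_2$, $\delta\!B=B_1-B_2$, $\delta\!\Pi=\Pi_1-\Pi_2$. From $\partial_t\delta\!\rho+\dive(u_1\,\delta\!\rho)=-\dive(\rho_2\,\delta\!u)$ with $\delta\!\rho|_{t=0}=0$, a transport estimate in $\dot H^{-1}$ gives $\|\delta\!\rho(t)\|_{\dot H^{-1}}\lesssim\int_0^t\bigl(\|\nabla u_1\|_{L^\infty}\|\delta\!\rho\|_{\dot H^{-1}}+\|\delta\!u\|_{L^2}\bigr)\,ds$, where $\nabla u_1\in L^1_T(L^\infty)$ by Step~2. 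Writing the perturbed Stokes/heat systems satisfied by $\delta\!u$ and $\delta\!B$ and performing an energy estimate in $L^\infty_T(L^2)\cap L^2_T(\dot H^1)$ (after a harmless time-weighting near $t=0$ if needed), one absorbs the transport term through $\tfrac12\tfrac{d}{dt}\int\rho_1|\delta\!u|^2$. The genuinely delicate term is the density--acceleration coupling $\delta\!\rho\,D_tu_2$: since $\delta\!\rho$ is only in $\dot H^{-1}$, one pairs it as $\langle\delta\!\rho,\,D_tu_2\cdot\delta\!u\rangle$ and bounds $D_tu_2\cdot\delta\!u$ in $\dot H^1$ using the weighted and Lorentz-in-time norms of $D_tu_2$ from \eqref{est-const-B21-0} — these are exactly borderline time-integrable — with the factor $\delta\!u$ absorbed by the dissipation. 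The remaining terms obey product laws, and $\|\sigma(\rho_1)-\sigma(\rho_2)\|_{L^\infty}\lesssim\|\delta\!\rho\|_{L^\infty}$ by $\sigma\in W^{2,\infty}$. A Grönwall argument then forces $\delta\!\rho=\delta\!u=\delta\!B=0$ on a maximal subinterval, and a continuation argument gives uniqueness on $[0,T]$.

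The chief obstacle throughout is the low regularity of the density: $\rho\in L^\infty$ only, so its transport equation produces no smoothing and $\rho$ enters the momentum equation as a merely bounded coefficient. This is why the whole scheme must rest on the smallness of $\mu(\rho)-1$ (here $=0$), of $\sigma(\rho)-1$, and of $\|(u_0,B_0)\|_{\dot H^{\frac12}}$, and why the behaviour near $t=0$ must be tracked so carefully — the Lorentz-in-time norms in \eqref{est-const-B21-0} are precisely what renders the density--acceleration coupling term closable in the uniqueness proof.
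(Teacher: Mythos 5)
Your Step 1 is broadly consistent with the paper's Proposition \ref{prop-const-priori-1}, but Steps 2 and 3 contain gaps that would prevent the argument from closing.

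In Step 2 you propose to apply $\dot\Delta_j$ to the nonlinear system and absorb the terms involving $(\rho^{-1}-1)$ as small perturbations. But the hypotheses of Theorem \ref{thm-GWS-MHD} only make $\mu(\rho_0)-1$ and $\sigma(\rho_0)-1$ small in $L^\infty$; the density itself satisfies just $m\le\rho\le M$, so $\rho^{-1}-1$ is neither small nor regular, and a paraproduct treatment of $\dot\Delta_j(\rho\,\partial_tu)$ with an $L^\infty$ coefficient loses derivatives and cannot be absorbed. The paper avoids this entirely by the decomposition $u=\sum_j u_j$ of \eqref{identity-1}, where each $(u_j,B_j)$ solves the \emph{linear} system \eqref{model-3d-freq-1} with data $(\dot\Delta_ju_0,\dot\Delta_jB_0)$ — no commutator with the rough density ever appears. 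Relatedly, your derivation of the $L^{4,1}_T$ norms by real interpolation between $t^{1/4}u_t\in L^2_T(L^2)$ and $t^{3/4}u_t\in L^\infty_T(L^2)$ interpolates between \emph{different} time weights, which is not a valid real-interpolation identity (for $p_0\ne p_1$ and distinct weights the $K$-functional does not produce a weighted Lorentz space); it would also, if true, yield the bound in terms of $\|(u_0,B_0)\|_{\dot H^{1/2}}$ alone, whereas these norms genuinely require $\dot B^{1/2}_{2,1}$. The paper instead interpolates at \emph{fixed} weight between $\|t^{1/2}\mathcal Q_j\|_{L^2_T(L^2)}\lesssim\|(\dot\Delta_ju_0,\dot\Delta_jB_0)\|_{L^2}$ and $\|t^{1/2}\mathcal Q_j\|_{L^\infty_T(L^2)}\lesssim 2^j\|(\dot\Delta_ju_0,\dot\Delta_jB_0)\|_{L^2}$ (Lemma \ref{Prop-const-j-Lip-1}), getting $d_j\|(u_0,B_0)\|_{\dot B^{1/2}_{2,1}}$ per block, and then sums in $j$ — this is exactly where the $\ell^1$ summability of the Besov data enters.

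In Step 3 your Eulerian uniqueness scheme founders on the variable resistivity. The difference equation for $\delta B$ contains $\curl\bigl((\sigma(\rho_1)-\sigma(\rho_2))\curl B_2\bigr)$; since $\nabla\rho_2$ does not exist, $\delta\rho$ (hence $\sigma(\rho_1)-\sigma(\rho_2)$) can only be measured in a negative space such as $\dot H^{-1}$, and pairing this term against $\delta B$ would require placing $\curl B_2\cdot\curl\,\delta B$ in $\dot H^1$ — two derivatives on each of $B_2$ and $\delta B$, which is unavailable. Your suggestion to use $\|\sigma(\rho_1)-\sigma(\rho_2)\|_{L^\infty}\lesssim\|\delta\rho\|_{L^\infty}$ does not help, as $\|\delta\rho\|_{L^\infty}$ is never controlled in the Gr\"onwall loop. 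This is precisely the obstruction flagged in Remark \ref{rmk-to-thm2-1}, and it is why the paper proves uniqueness in Lagrangian coordinates (Section \ref{Sect4}): there the density of \emph{both} solutions equals the common initial datum $\rho_0$, so $\sigma(\rho_0)$ is identical for the two flows, the dangerous difference of resistivities never appears, and $\delta\rho$ is eliminated altogether; the price is controlling $\|\nabla u\|_{L^1_T(L^\infty)}$ (via \eqref{est-const-B21-4}) so that the flow maps are well defined and the matrices $\mathcal A$, $\delta\mathcal A$ can be estimated through Lemma \ref{lem-lagr-quant-est-1}.
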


\begin{rmk}\label{rmk-to-thm2-1}
We emphasize that different from \cite{HSWZ2024}, the estimate of $\|t^{\frac{1}{2}} \nabla{u}\|_{ L^2_T(L^{\infty})}$ in \eqref{est-const-basic-0} is not sufficient for us  to prove the uniqueness of the solution  to the system \eqref{1.2} constructed  in Theorem \ref{mainthm-GWP} because the resistivity $\sigma(\rho)$ varies with respect to the density function. In order to overcome this difficulty, here we derive the estimate of  $ \|\nabla{u}\|_{L^1_T(L^{\infty})}$ by using the maximal estimates $\|t^{\frac{1}{2}}\nabla^2{u}\|_{L^{4, 1}_T(L^2)}$ and
$\|t \nabla^2u\|_{ L^{4,1}_T(L^6)},$ which can derived under the additional assumption: $(u_0,B_0)\in \dot B^{\frac{1}{2}}_{2,1}\times \dot B^{\frac{1}{2}}_{2,1}$ (see \eqref{est-const-B21-4} in Sect. \ref{sect-apriori}). Moreover, the estimate of $ \|\nabla{u}\|_{L^1_T(L^{\infty})}$ makes it possible for us to solve the density patch problem of the 3-D inhomogeneous Navier-Stokes (or MHD) system in the future. 
\end{rmk}

The organization of this paper is as follows:

In the second section, we shall derive the {\it a priori} estimates of the system \eqref{1.2}, which are necessary to prove the existence part of
both Theorems \ref{thm-GWS-MHD} and \ref{mainthm-GWP}.

In the third section, we shall prove Theorem \ref{thm-GWS-MHD}.

In the fourth section, we shall prove Theorem\refer{mainthm-GWP}.

Finally in the appendix, we shall present a toolbox on basics of Littlewood-Paley theory
and Lorentz spaces.\\

Let us complete this section by the notations of the paper:

Let $A, B$ be two operators, we denote $[A;B]=AB-BA,$ the commutator
between $A$ and $B$. For $a\lesssim b$, we mean that there is a
uniform constant $C,$ which may be different on different lines,
such that $a\leq Cb$.  We denote by $(a|b)$ the $L^2(\R^3)$ inner
product of $a$ and $b,$ $(d_j)_{j\in\Z}$ (resp.
$(c_{j})_{j\in\Z}$) will be a generic element of $\ell^1(\Z)$
(resp. $\ell^2(\Z)$) so that $\sum_{j\in\Z}d_j=1$ (resp.
$\sum_{j\in\Z}c^2_{j}=1$).

For $X$ a Banach space and $I$ an interval of $\R,$ we denote by
${\cC}(I;\,X)$ the set of continuous functions on $I$ with values in
$X.$   For $q\in[1,+\infty],$ the
notation $L^q(I;\,X)$ stands for the set of measurable functions on
$I$ with values in $X,$ such that $t\longmapsto\|f(t)\|_{X}$ belongs
to $L^q(I).$ If in particular, $I=]0,T[,$ we denote the norm $\|f\|_{L^q(]0,T[;X)}$ by $L^q_T(X).$


\renewcommand{\theequation}{\thesection.\arabic{equation}}
\setcounter{equation}{0}

\setcounter{equation}{0}

\renewcommand{\theequation}{\thesection.\arabic{equation}}
\setcounter{equation}{0}

\section{{\it A priori} estimates}\label{sect-apriori}

In this section, we shall derive the {\it a priori} estimates for smooth enough solutions of \eqref{1.2}, which are necessary to prove the existence part of both Theorems \ref{thm-GWS-MHD} and  \ref{mainthm-GWP}. Motivated by \cite{Zhang2020}, we build the following scheme for the construction of the solutions to \eqref{1.2}.
 Let $(\rho, u, B, \nabla\Pi)$ be a smooth enough solution of \eqref{1.2} on $[0, T^{\ast}[$. Then it is easy to observe  from the continuity equation of \eqref{1.2} that for any $ T \in [0, T^{\ast}[$, there hold \eqref{bdd-density-visc-1}  and
\begin{equation}\label{visco-resis-1}
\begin{split}
\|\mu(\rho)-1\|_{L^{\infty}_T(L^\infty)}=\|\mu(\rho_0)-1\|_{L^\infty},\quad \|\sigma(\rho)-1\|_{L^{\infty}_T(L^\infty)}=\|\sigma(\rho_0)-1\|_{L^\infty}.
\end{split}
\end{equation}
 For any $j\in\Z,$ we define $(u_j, \nabla\Pi_j, B_j)$ through
 the following system:
\begin{equation}\label{model-3d-freq-1}
\begin{cases}
&\rho\bigl(\partial_t u_j
+(u\cdot\nabla)u_j\bigr)
-2\dv\big(\mu(\rho)d_j\big)
+\nabla\Pi_j=(B\cdot\na)B_j\quad \mbox{for} \ (t,x)\in [0,T^\ast[\times\R^3,\\
&\partial_t B_j+(u\cdot\nabla) B_j
+\curl\bigl(\sigma(\rho)\curl\, B_j\bigr)
=(B\cdot\nabla) u_j,\\
&\dv\, u_j=\dv\, B_j=0,\\
&(u_j,B_j)|_{t=0}=(\dot\Delta_ju_0,\dot\Delta_jB_0),
\end{cases}
\end{equation}
where $d_j\eqdefa \frac{1}{2}\left(\nabla{u}_j+(\nabla{u}_j)^T\right),$ the dyadic operator $\dot{\D}_j$ is recalled in Appendix A.

Then  we deduce from the uniqueness of local smooth solution to \eqref{1.2} that for any $t\in [0,T^\ast[$
\begin{equation}\label{identity-1}
\begin{split}
u(t)=\sum_{j \in \mathbb{Z}}u_j(t),\quad \nabla\,
\Pi(t)=\sum_{j \in \mathbb{Z}}\nabla\,\Pi_j(t) \quad
\mbox{and}\quad B(t)=\sum_{j\in\Z}B_j(t)\quad \mbox{in} \quad \mathcal{S}'_h.
\end{split}
\end{equation}

In what follows, we separate the {\it a priori} estimates of $(u,B,\na\Pi)$  into  $\mu(\rho)$ being variable  and constant cases.


\subsection{Variable viscosity case}

In view of \eqref{identity-1}, to establish the {\it a priori} estimates for $(u,B,\na\Pi),$ we need
first to derive the related estimates for $(u_j,B_j,\na\Pi_j),$  which we state as follows:

\begin{lem}\label{S2lem1} {\sl For asufficiently small positive constant $ \mathfrak{c}_2,$
we denote
\begin{equation}\label{small-assump-u-1}
T^\star\eqdefa \sup\bigl\{\ T\in ]0, T^{\ast}[: \ \|(u,B)\|_{L^\infty_{T}(L^3)}+\|\nabla u\|_{L^4_{T}(L^2)}
\leq \mathfrak{c}_2\ \bigr\}.
\end{equation}
Then under the assumptions of Theorem \ref{thm-GWS-MHD}, and for any $T\in [0, T^{\star}[$ and any $j \in \mathbb{Z}$,
one has
\begin{equation}\label{S2eq1}
\begin{split}
&\|(u_j,B_j)\|_{L^\infty_T(L^2)}+
\|(\nabla\,u_j,\nabla\,B_j)\|_{L^2_T(L^2)}
\lesssim
\|(\dot\Delta_ju_0,\dot\Delta_jB_0)\|_{L^2},\\
&\|(\nabla\,u_j,\nabla\,B_j)\|_{L^\infty_T(L^2)}+\|(\partial_tu_j,\partial_tB_j)\|_{L^2_T(L^2)}
\lesssim
2^j \|(\dot\Delta_ju_0,\dot\Delta_jB_0)\|_{L^2},
\end{split}
\end{equation}
and
\begin{equation}\label{est-variable-j-1}
\begin{split}
 &\|\sqrt{t}(\partial_tu_j,\partial_tB_j) \|_{L^2_T(L^2)}
+\|\sqrt{t}(\nabla\,u_j,\nabla\,B_j)\|_{L^\infty_T(L^2)}
\lesssim
\|(\dot\Delta_ju_0,\dot\Delta_jB_0\|_{L^2},\\
&\|{t}^{-\alpha}(\nabla u_j,\nabla B_j)\|_{L^2_T(L^2)}
\lesssim
2^{2j\alpha }\|(\dot\Delta_ju_0,\dot\Delta_jB_0)\|_{L^2} \quad \mbox{for any} \ \alpha\in[0,{1}/{2}[.
\end{split}
\end{equation}
}
\end{lem}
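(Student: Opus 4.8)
\textbf{Proof plan for Lemma \ref{S2lem1}.}

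The plan is to run a standard energy method on the linear (in $(u_j,B_j)$) system \eqref{model-3d-freq-1}, exploiting the smallness condition \eqref{small-assump-u-1} defining $T^\star$ together with the smallness of $\|\mu(\rho_0)-1\|_{L^\infty}+\|\sigma(\rho_0)-1\|_{L^\infty}$ from \eqref{small-data-1}, and the bounds $m\le\rho\le M$ from \eqref{bdd-density-visc-1}. First I would establish the basic $L^\infty_T(L^2)\cap L^2_T(\dot H^1)$ bound (the first line of \eqref{S2eq1}): take the $L^2$ inner product of the $u_j$-equation with $u_j$ and of the $B_j$-equation with $B_j$, add, and observe that the convection terms $(\rho u\cdot\nabla u_j\,|\,u_j)$ and $(u\cdot\nabla B_j\,|\,B_j)$ vanish by $\dive u=0$, while the coupling terms $((B\cdot\nabla)B_j\,|\,u_j)$ and $((B\cdot\nabla)u_j\,|\,B_j)$ cancel after integration by parts (again using $\dive B=0$). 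The viscous/resistive terms give $2\int\mu(\rho)|d_j|^2 + \int\sigma(\rho)|\curl B_j|^2$, which is bounded below by a positive multiple of $\|\nabla u_j\|_{L^2}^2+\|\nabla B_j\|_{L^2}^2$ thanks to Korn's inequality, $\dive u_j=\dive B_j=0$, and the lower bounds $\underline\mu,\underline\sigma>0$; here one also uses that $\|\mu(\rho)-1\|_{L^\infty},\|\sigma(\rho)-1\|_{L^\infty}$ are small so the variable coefficients are a perturbation of the constant-coefficient case. Since there is no forcing, Gr\"onwall (in fact a direct integration) yields the first line of \eqref{S2eq1} with constant $1$, uniformly in $j$.

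Next, for the second line of \eqref{S2eq1}, I would differentiate in time, or more robustly test the $u_j$-equation with $\partial_t u_j$ and the $B_j$-equation with $\partial_t B_j$: this produces $\rho|\partial_t u_j|^2$ and $|\partial_t B_j|^2$ on the left, the time derivative of the energy $\frac{d}{dt}\big(\int\mu(\rho)|d_j|^2+\int\sigma(\rho)|\curl B_j|^2\big)$ up to the harmless term coming from $\partial_t\mu(\rho),\partial_t\sigma(\rho)$ (controlled via $\partial_t\rho=-\dive(\rho u)$ and $\mu,\sigma\in W^{2,\infty}$), and the remaining terms $\rho u\cdot\nabla u_j$, $u\cdot\nabla B_j$, $(B\cdot\nabla)B_j$, $(B\cdot\nabla)u_j$ tested against $\partial_t u_j,\partial_t B_j$. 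These last terms are estimated by absorbing the time derivatives into the left side (Young's inequality) after bounding, e.g., $\|u\cdot\nabla u_j\|_{L^2}\lesssim\|u\|_{L^\infty}\|\nabla u_j\|_{L^2}$ or $\|u\|_{L^3}\|\nabla u_j\|_{L^6}\lesssim\|u\|_{L^3}\|\nabla^2 u_j\|_{L^2}$, and then invoking the Stokes/elliptic regularity for the stationary system to convert $\|\nabla^2 u_j\|_{L^2}+\|\nabla\Pi_j\|_{L^2}\lesssim\|\rho\partial_t u_j\|_{L^2}+\|\rho u\cdot\nabla u_j\|_{L^2}+\|(B\cdot\nabla)B_j\|_{L^2}$ (and the analogue for $B_j$). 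The smallness $\|(u,B)\|_{L^\infty_T(L^3)}\le\mathfrak c_2$ is exactly what lets these nonlinear-looking terms be absorbed. Integrating in time and using that the data contributes $\|d_j(0)\|_{L^2}^2+\|\curl B_j(0)\|_{L^2}^2\lesssim 2^{2j}\|(\dot\Delta_j u_0,\dot\Delta_j B_0)\|_{L^2}^2$ (Bernstein's inequality, since $u_j|_{t=0}=\dot\Delta_j u_0$ is frequency-localized near $2^j$) gives the second line of \eqref{S2eq1}.

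For the weighted estimates \eqref{est-variable-j-1}, I would repeat the second-line argument after multiplying the equations by the weight $t^{1/2}$ (equivalently, estimating $\frac{d}{dt}\big(t\int\mu(\rho)|d_j|^2+t\int\sigma(\rho)|\curl B_j|^2\big)$): the extra term $\int\mu(\rho)|d_j|^2+\int\sigma(\rho)|\curl B_j|^2$ produced by $\partial_t t=1$ is integrable in time by the first line of \eqref{S2eq1}, so one gets $\|\sqrt t(\partial_t u_j,\partial_t B_j)\|_{L^2_T(L^2)}+\|\sqrt t(\nabla u_j,\nabla B_j)\|_{L^\infty_T(L^2)}\lesssim\|(\dot\Delta_j u_0,\dot\Delta_j B_0)\|_{L^2}$, with the $2^j$ gone because the weight kills the singular contribution of the data. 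Finally, the bound $\|t^{-\alpha}(\nabla u_j,\nabla B_j)\|_{L^2_T(L^2)}\lesssim 2^{2j\alpha}\|(\dot\Delta_j u_0,\dot\Delta_j B_0)\|_{L^2}$ for $\alpha\in[0,1/2[$ is obtained by interpolating the first line of \eqref{S2eq1} (the $\alpha=0$ endpoint, with weight $2^{0}$) against the second line (which controls $\|\nabla u_j\|_{L^\infty_T(L^2)}\lesssim 2^j\|\cdots\|_{L^2}$): writing $\int_0^T t^{-2\alpha}\|\nabla u_j\|_{L^2}^2\,dt$ and splitting the time integral at $t\sim 2^{-2j}$, on $t\le 2^{-2j}$ use the $L^2_T(L^2)$-bound on $\nabla u_j$ together with $\int_0^{2^{-2j}}t^{-2\alpha}\,dt\sim 2^{-2j(1-2\alpha)}\cdot 2^{2j}$ wait — more precisely one splits so that the $L^\infty_T(L^2)$ bound handles the region near $t=0$ where the weight is singular and the $L^2_T(L^2)$ bound handles the rest; since $\alpha<1/2$ the weight is locally integrable, and a direct computation yields the factor $2^{2j\alpha}$. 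The main obstacle I expect is the bookkeeping in the $\partial_t$-energy estimate: correctly handling the variable coefficients $\mu(\rho),\sigma(\rho)$ (their time derivatives, and the fact that the resistive term is $\curl(\sigma(\rho)\curl B_j)$ rather than a plain Laplacian, so Korn-type and $\curl$-$\dive$ identities are needed to recover full $\|\nabla B_j\|_{L^2}$ control) and verifying that all the coupling and convection terms can indeed be absorbed using only the smallness of $\mathfrak c_2$ and of $\varepsilon_0$, uniformly in the dyadic index $j$.
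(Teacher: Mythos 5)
Your overall architecture — basic energy estimate, then testing with $(\partial_t u_j,\partial_t B_j)$, then multiplying by $t$ and using the first line of \eqref{S2eq1} to absorb the extra term, and finally splitting the time integral at $t\sim 2^{-2j}$ for the $t^{-\alpha}$ bound — coincides with the paper's Steps 1, 2--3, and the end of the proof. However, there are two genuine gaps in your treatment of the $\partial_t$-energy estimate, and you have correctly identified the location of the difficulty without resolving it.

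First, the term $\int_{\R^3}\partial_t\bigl(\mu(\rho)\bigr)|d_j|^2\,dx$ is \emph{not} harmless and is \emph{not} controlled by $\partial_t\rho=-\dive(\rho u)$ together with $\mu\in W^{2,\infty}$: since $\rho$ is only bounded, $\partial_t\mu(\rho)=-u\cdot\nabla\mu(\rho)$ involves $\nabla\rho$, for which no norm is available, and $|d_j|^2$ is not regular enough to pair against it distributionally. The paper's resolution (following Desjardins) is to write $-\int u\cdot\nabla\mu(\rho)|d_j|^2=-\int u\cdot\nabla([\mu(\rho)]^{-1})|\mu(\rho)d_j|^2$ and integrate by parts so that the derivative lands on the full flux $2\mu(\rho)d_j$; using the symmetry of $d_j$ and $\dive u=0$ this produces $\int (u\cdot\nabla)u_j\cdot\dive(2\mu(\rho)d_j)\,dx$ plus a lower-order term, after which $\dive(2\mu(\rho)d_j)$ is replaced via the momentum equation by $\rho\partial_t u_j+\rho u\cdot\nabla u_j-(B\cdot\nabla)B_j+\nabla\Pi_j$, and the resulting pressure contribution $\int(u\cdot\nabla)u_j\cdot\nabla\Pi_j$ is handled through the elliptic equation $\Pi_j=(-\Delta)^{-1}\dive\otimes\dive(2\mu(\rho)d_j)-(-\Delta)^{-1}\dive(\rho\partial_tu_j+\cdots)$. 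An entirely analogous device is needed for $-\int u\cdot\nabla\sigma(\rho)|\curl B_j|^2$ in the magnetic estimate. Without this manipulation the $\partial_t$-energy inequality cannot be closed.

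Second, your appeal to Stokes regularity in the form $\|\nabla^2 u_j\|_{L^2}+\|\nabla\Pi_j\|_{L^2}\lesssim\|\rho\partial_tu_j\|_{L^2}+\cdots$ is not available here: the elliptic operator is $\dive(2\mu(\rho)d_j)$ with $\mu(\rho)$ merely bounded and measurable, so there is no $H^2$ estimate. What the paper proves instead (its Step 4) is the substitute $\|\nabla u_j\|_{L^p}\lesssim\|\nabla u_j\|_{L^2}^{\frac3p-\frac12}\|\mathbb{P}\dive(2\mu(\rho)d_j)\|_{L^2}^{3(\frac12-\frac1p)}$ for $p\in[2,6]$, obtained by writing $\nabla u_j=\nabla(-\Delta)^{-1}\dive\mathbb{P}(2(\mu(\rho)-1)d_j)-\nabla(-\Delta)^{-1}\dive\mathbb{P}(2\mu(\rho)d_j)$ and absorbing the first term using the smallness of $\|\mu(\rho_0)-1\|_{L^\infty}$ in \eqref{small-data-1}; combined with the equation this yields $\|(\nabla u_j,\nabla B_j)\|_{L^6}\lesssim\|(\partial_tu_j,\partial_tB_j)\|_{L^2}$ and the corresponding $L^4$ interpolation, which is exactly what is needed to absorb the convection and coupling terms under \eqref{small-assump-u-1}. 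Your intended conclusion is the right one, but the route through $\nabla^2 u_j$ would fail as stated; this is also the one place where the smallness of $\varepsilon_0$ (and not only of $\mathfrak{c}_2$) enters in an essential way.
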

\begin{proof} As a convention in the proof of this lemma, we always assume that $t, T\leq T^\star.$ We divide the
proof of this lemma into the following steps:

\no{\bf Step 1.} The basic energy estimate of $(u_j,B_j).$

 We first get, by
taking the $L^2$ inner-product of the $u_j$ (resp. $B_j$) equations in \eqref{model-3d-freq-1} with $u_j$ (resp. $B_j$)
 and using integration by parts, that
\begin{equation*}\label{L2-ujBj-1}
\begin{split}
&\frac{1}{2}\frac{d}{dt}\|\sqrt{\rho}\,u_j(t)\|_{L^2}^2+2\int_{\mathbb{R}^3}  \mu(\rho)d_j : \nabla u_j\,dx=
\int_{\R^3} (B\cdot\nabla) B_j\cdot u_j\,dx,\\
&\frac{1}{2}\frac{d}{dt}\|B_j(t)\|_{L^2}^2+\int_{\mathbb{R}^3} \sigma(\rho)|\curl{B}_j|^2\,dx=
\int_{\R^3} (B\cdot\nabla) u_j\cdot B_j\,dx.
\end{split}
\end{equation*}
Due to $\dv\,B=0$, one has
\begin{equation*}\label{L2-uj-2}
\begin{split}
&\int_{\R^3} (B\cdot\nabla) B_j\cdot u_j\,dx+\int_{\R^3} (B\cdot\nabla) u_j\cdot B_j\,dx=0,\\
&2\int_{\mathbb{R}^3}  \mu(\rho)d_j : \nabla u_j\,dx=2\|\sqrt{\mu(\rho)}\,d_j\|_{L^2}^2,\end{split}
\end{equation*}
as a consequence, we obtain
\begin{equation*}\label{L2-uj-3}
\begin{split}
&\frac{1}{2}\frac{d}{dt}\|(\sqrt{\rho}\,u_j,\,B_j)(t)\|_{L^2}^2+2\|\sqrt{\mu(\rho)}\,d_j\|_{L^2}^2+\|\sqrt{\sigma(\rho)}\curl{B}_j\|_{L^2}^2=0,
\end{split}
\end{equation*}
which together with \eqref{bdd-density-visc-1} implies
\begin{equation*}\label{L2-uj-4}
\begin{split}
&\frac{1}{2}\frac{d}{dt}\|(\sqrt{\rho}\,u_j,\,B_j)(t)\|_{L^2}^2+2\underline{\mu}\|d_j\|_{L^2}^2+\underline{\sigma}\|\curl{B}_j\|_{L^2}^2\leq 0.
\end{split}
\end{equation*}
Due to $\dv\, u_j=\dv\,B_j=0$, we deduce that there exists a positive constant $c_0$ so that
\begin{equation*}\label{est-basic-1-1}
\begin{split}
\frac{d}{dt}\|(\sqrt{\rho} u_j,B_j)(t)\|_{L^2}^2
+c_0\|(\nabla\,u_j,\nabla\,B_j)\|_{L^2}^2
\leq
0.
\end{split}
\end{equation*}
By integrating the above equation over $[0,t]$ for $t\leq T$ and using \eqref{t.1} and \eqref{bdd-density-visc-1}, we obtain the first inequality of \eqref{S2eq1}.

\no{\bf Step 2.} The energy estimate of $\na u_j.$

Motivated by the derivation of (29) in \cite{des1997}, we get, by taking $L^2$ inner product of the momentum equation of \eqref{model-3d-freq-1} with $\partial_tu_j,$ that
\begin{equation*}\label{est-basic-2-1}
\begin{split}
\|\sqrt{\rho}\,\partial_tu_j\|_{L^2}^2&-\int_{\R^3}\dive\bigl(2\mu(\rho)
d_j\bigr)|\p_tu_j\,dx\\
& = -\int_{\R^3}\partial_tu_j|\bigl(\rho
u\cdot\nabla u_j\bigr)\,dx
+\int_{\R^3}(B\cdot\na)B_j| \partial_tu_j\,dx,
\end{split}
\end{equation*}
which follows that
\begin{equation}\label{est-basic-2-2}
\begin{split}
\|\sqrt{\rho}\,\partial_tu_j\|_{L^2}^2
-\int_{\R^3}\dive\bigl(2\mu(\rho)d_j\bigr)|\p_tu_j\,dx
\lesssim
\|(u,B)\|_{L^3}
\|(\nabla u_j, \nabla B_j)\|_{L^6}\|\sqrt{\rho}\,\partial_tu_j\|_{L^2}.
\end{split}
\end{equation}
By using integration by parts, we have
\begin{equation}\label{est-basic-2-3}
\begin{split}
-\int_{\R^3}\dive\bigl(2\mu(\rho) d_j\bigr)|\p_tu_j\,dx&=\int_{\R^3}2\mu(\rho) d_j:\p_td_j\,dx\\
&=\frac{d}{dt}\|\sqrt{\mu(\rho)}\, d_j(t)\|_{L^2}^2-\int_{\R^3}\p_t\bigl(\mu(\rho)\bigr) |d_j|^2\,dx.
\end{split}
\end{equation}
Yet by using  integration by parts, one has
\begin{equation*}
\begin{split}
&-\int_{\R^3}\p_t\bigl(\mu(\rho)\bigr) |d_j|^2\,dx=\int_{\R^3} u\cdot\na \mu(\rho) \,|d_j|^2\,dx\\
&=-\int_{\R^3} u\cdot\na ([\mu(\rho)]^{-1}) \,|\mu(\rho)\,d_j|^2\,dx=\sum_{i=1}^3\int_{\R^3}u^i
d_j:\p_i\bigl(2\mu(\r)d_j\bigr)\,dx.
\end{split}
\end{equation*}
Thanks to the symmetry of the matrix $d_j=(d^{k\ell}_{j})$, we obtain
\begin{equation*}
\begin{split}
&\sum_{i=1}^3\int_{\R^3}u^i
d_j:\p_i\bigl(2\mu(\r)d_j\bigr)\,dx
=\sum_{1\leq i,k,\ell\leq3}
\int_{\R^3}u^i\p_ku^\ell_j\p_i\bigl(2\mu(\r)d^{k\ell}_{j}\bigr)\,dx\\
&
=-\sum_{1\leq i,k,\ell\leq 3}\Bigl(\int_{\R^3}u^i
u^\ell_j\p_i\p_k\bigl(2\mu(\r)d_j^{k\ell}\bigr)\,dx+\int_{\R^3}\p_ku^i
u^\ell_j\p_i\bigl(2\mu(\r)d_j^{k\ell}\bigr)\,dx\Bigr),
\end{split}
\end{equation*}
which along with the fact $\dive u=0$ leads to
\begin{equation}\label{est-basic-2-5}
\begin{split}
&-\int_{\R^3}\p_t\bigl(\mu(\rho)\bigr) |d_j|^2\,dx\\
&
=\sum_{1\leq i,k,\ell\leq3}
\Bigl(\int_{\R^3}u^i\,\partial_iu^\ell_j\,\partial_k\bigl(2\mu(\rho)
d^{k\ell}_{j}\bigr)dx+ \int_{\R^3}2\mu(\rho)
\partial_ku^i\,\partial_iu^\ell_j\,d^{k\ell}_{j}\,dx\Bigr).
\end{split}
\end{equation}
By inserting \eqref{est-basic-2-5} into \eqref{est-basic-2-3}, we find
\begin{equation*}\label{est-basic-2-6}
\begin{split}
-\int_{\R^3}\dive\bigl(2\mu(\rho) d_j\bigr)|\p_tu_j\,dx&=\frac{d}{dt}\|\sqrt{\mu(\rho)}\, d_j(t)\|_{L^2}^2+\int_{\R^3} (u\cdot \nabla){u}_j\cdot\dv\,(2\mu(\rho)
d_{j})\,dx\\
&\quad
+\sum_{1\leq i,k,\ell\leq3} 2\int_{\R^3}\mu(\rho)\partial_ku^i\,\partial_iu^\ell_j\,d^{k\ell}_{j}\,dx,
\end{split}
\end{equation*}
from which and  the momentum equation of \eqref{model-3d-freq-1}, we infer
\begin{equation}\label{est-basic-2-7}
\begin{split}
-\int_{\R^3}\dive\bigl(2\mu(\rho) d_j\bigr)|\p_tu_j\,dx=&\frac{d}{dt}\|\sqrt{\mu(\rho)}\, d_j(t)\|_{L^2}^2+\int_{\R^3} (u\cdot \nabla){u}_j\cdot \nabla\Pi_j\,dx\\
&+\int_{\R^3} \bigl(u\cdot \nabla){u}_j\cdot(\rho\partial_tu_j+\rho
u\cdot\nabla u_j-(B\cdot\na)B_j\bigr)\,dx\\
&
+\sum_{1\leq i,k,\ell\leq3} 2\int_{\R^3}\mu(\rho)\partial_ku^i\,\partial_iu^\ell_j\,d^{k\ell}_{j}\,dx.
\end{split}
\end{equation}

By substituting  \eqref{est-basic-2-7} into \eqref{est-basic-2-2}, we achieve
\begin{equation}\label{est-basic-2-8}
\begin{split}
&\frac{d}{dt}\|\sqrt{\mu(\rho)}\, d_j(t)\|_{L^2}^2+\|\sqrt{\rho}\,\partial_tu_j\|_{L^2}^2
+\int_{\R^3} (u\cdot \nabla){u}_j\cdot \nabla\Pi_j\,dx\\
&\lesssim \|u\cdot \nabla {u}_j\|_{L^2}\|\sqrt{\rho}\partial_tu_j\|_{L^2}+\|u\cdot \nabla {u}_j\|_{L^2}^2+\|B\cdot \nabla {B}_j\|_{L^2}^2\\
&\quad+\|\nabla{u}\|_{L^2}\|\nabla{u}_j\|_{L^4}^2+
\|(u,B)\|_{L^3}
\|\nabla(u_j,B_j)\|_{L^6}\|\sqrt{\rho}\,\partial_tu_j\|_{L^2}.
\end{split}
\end{equation}
To deal with the term $\int_{\R^3} (u\cdot \nabla){u}_j\cdot \nabla\Pi_j\,dx$, we get, by taking
space divergence to the momentum equation of \eqref{model-3d-freq-1}, that
\begin{equation*}\label{b.4a}
\begin{split}
\Pi_j& =(-\Delta)^{-1}\dv\otimes\dv\bigl(2\mu(\rho)d_j\bigr)
-(-\Delta)^{-1}\dv\bigl(\rho\partial_tu_j+\rho u\cdot\nabla
u_j-B\cdot\nabla B_j\bigr),
\end{split}
\end{equation*}
from which, we infer
\begin{equation*}\label{est-basic-2-10}
\begin{split}
&\bigl|\int_{\R^3} (u\cdot \nabla){u}_j\cdot \nabla\Pi_j\,dx\bigr|=\bigl|\sum_{i,k=1}^3\int_{\R^3}
\Pi_j\partial_iu^k\partial_ku^i_j\,dx\bigr| \lesssim
\|\nabla u\|_{L^2}\|\nabla u_j\|_{L^4}^2\\
&\quad
+\bigl\|(-\Delta)^{-1}\dv\bigl(\rho\partial_tu_j
+\rho(u\cdot\nabla)u_j-B\cdot\nabla B_j\bigr)\bigr\|_{\dot H^1}
\bigl\|\sum_{i,k=1}^3\partial_i(u^k\partial_ku^i_j)\bigr\|_{\dot H^{-1}},
\end{split}
\end{equation*}
we thus obtain
\begin{equation*}\label{est-basic-2-11}
\begin{split}
|\int_{\R^3} (u\cdot \nabla){u}_j\cdot \nabla\Pi_j\,dx|
\lesssim
\|\nabla u\|_{L^2}\|\nabla u_j\|_{L^4}^2&+\|u\cdot \nabla {u}_j\|_{L^2}\|\sqrt{\rho}\partial_tu_j\|_{L^2}\\
&+\|u\cdot \nabla {u}_j\|_{L^2}^2+\|B\cdot \nabla {B}_j\|_{L^2}^2.
\end{split}
\end{equation*}
By substituting the above inequality into \eqref{est-basic-2-8} and using Young's inequality, we find
\begin{equation}\label{est-basic-2-13}
\begin{split}
&\frac{d}{dt}\|\sqrt{\mu(\rho)}\, d_j(t)\|_{L^2}^2+\frac{1}{2}\|\sqrt{\rho}\,\partial_tu_j\|_{L^2}^2
\lesssim
\|\nabla u\|_{L^2}\|\nabla u_j\|_{L^4}^2 +\|(u,B)\|_{L^3}^2
\|\nabla(u_j,B_j)\|_{L^6}^2.
\end{split}
\end{equation}

\no{\bf Step 3.} The energy estimate of $ \curl B_j.$

We  first get, by taking
$L^2$ inner product of the magnetic equation of \eqref{model-3d-freq-1} with
$\p_tB_j,$ that
\begin{equation*}\label{est-basic-2-14}
\begin{split}
\int_{\R^3}|\partial_tB_j|^2\,dx +\int_{\R^3}\curl\bigl(\sigma(\rho)
\curl\,B_j\bigr)| \p_tB_j\,dx =\int_{\R^3}\bigl(-
u\cdot\nabla{B}_j+B\cdot\nabla{u}_j\bigr)|\partial_tB_j\,dx.
\end{split}
\end{equation*}
By using integrating by parts and transport equation of $\s(\r),$ we  obtain
\begin{equation}\label{est-basic-2-15}
\begin{split}
&\f12\f{d}{dt}\|\sqrt{\sigma(\rho)}\,\curl{B}_j(t)\|_{L^2}^2+\|\partial_tB_j\|_{L^2}^2\\&
=-\f12\int_{\R^3}(u\cdot\nabla)\sigma(\rho)|\curl{B}_j|^2\,dx+\int_{\R^3}\bigl(-
u\cdot\nabla{B}_j+B\cdot\nabla{u}_j\bigr)|\partial_tB_j\,dx.
\end{split}
\end{equation}
Notice that
\begin{equation*}
\begin{split}
&-\int_{\R^3}(u\cdot\nabla)\sigma(\rho)|\curl{B}_j|^2\,dx
=\int_{\R^3} (u\cdot\nabla)[\sigma(\rho)^{-1}] |\sigma(\rho)\,\curl{B}_j|^2\,dx\\
&=-2\int_{\R^3}(u\cdot\nabla)\bigl(\sigma(\rho)\curl\,B_j\bigr)|\curl\,B_j\,dx\\
&
=-2\int_{\R^3}u^k\bigl(\partial_k(\sigma(\rho)\curl\,B_j)^{\ell}-\partial_{\ell}(\sigma(\rho)\curl\,B_j)^{k}\bigr)
 |(\curl\,B_j)^{\ell}dx\\
&\qquad
-2\int_{\R^3}\nabla u^k\wedge\bigl(\sigma(\rho)\curl\,B_j\bigr)| \partial_kB_j\,dx.
\end{split}
\end{equation*}
Due to the fact
\begin{equation*}
 |\bigl(\partial_k(\sigma(\rho)\curl\,B_j)^{\ell}-\partial_{\ell}(\sigma(\rho)\curl\,B_j)^{k}\bigr)| \lesssim |\curl\bigl(\sigma(\rho)\curl\,B_j\bigr)|,
 \end{equation*}
we get
\begin{equation}\label{est-basic-2-16}
\begin{split}
&|\int_{\R^3}(u\cdot\nabla)\sigma(\rho)|\curl{B}_j|^2\,dx|\\
&\lesssim \|u\|_{L^3} \|\curl\bigl(\sigma(\rho)\curl\,B_j\bigr)\|_{L^2} \|\curl\,B_j\|_{L^6} +\|\nabla u\|_{L^2} \|\sigma(\rho)\curl\,B_j\|_{L^4} \|\nabla{B}_j\|_{L^4}\\
&\lesssim \|u\|_{L^3}\|\nabla{B}_j\|_{L^6} \bigl(\|\partial_tB_j\|_{L^2}+\|u\cdot\nabla{B}_j\|_{L^2}+\|B\cdot\nabla{u}_j\|_{L^2}\bigr) +\|\nabla u\|_{L^2}\|\nabla{B}_j\|_{L^4}^2,
\end{split}
\end{equation}
where we used the equations $\curl\bigl(\sigma(\rho)\curl\,B_j\bigr)
=-\partial_tB_j-(u\cdot\nabla)B_j+(B\cdot\nabla)u_j$ in the last inequality.

 By substituting  \eqref{est-basic-2-16} into \eqref{est-basic-2-15}, we obtain
 \begin{equation*}\label{est-basic-2-17}
\begin{split}
&\f12\f{d}{dt}\|\sqrt{\sigma(\rho)}\,\curl{B}_j(t)\|_{L^2}^2+\|\partial_tB_j\|_{L^2}^2\\
&
\lesssim \|u\|_{L^3}\|\nabla{B}_j\|_{L^6} (\|\partial_tB_j\|_{L^2}+\|u\cdot\nabla{B}_j\|_{L^2}+\|B\cdot\nabla{u}_j\|_{L^2})\\
&\quad +\|\nabla u\|_{L^2}\|\nabla{B}_j\|_{L^4}^2+(\|u\cdot\nabla{B}_j\|_{L^2}+\|B\cdot\nabla{u}_j\|_{L^2})\|\partial_tB_j\|_{L^2}.
\end{split}
\end{equation*}
Applying Young's inequality gives
 \begin{equation}\label{est-basic-2-18}
\begin{split}
\f{d}{dt}\|\sqrt{\sigma(\rho)}\,\curl\,B_j(t)\|_{L^2}^2 &+\|\partial_tB_j\|_{L^2}^2\\
&\lesssim
\|\nabla u\|_{L^2}\|\nabla u_j\|_{L^4}^2 +\|(u,B)\|_{L^3}^2
\|(\nabla{u}_j, \nabla{B}_j)\|_{L^6}^2.
\end{split}
\end{equation}

\no{\bf Step 4.} The estimate of $\|(\nabla{u}_j, \nabla{B}_j)\|_{L^p}$ for $p=4,\,6$.

  We first write
  \begin{equation*}\label{est-basic-2-19}
\begin{split}
&\na u_j=\na(-\D)^{-1}\dv\mathbb{P} \bigl(2(\mu(\rho)-1)d_j\bigr)
-\na(-\D)^{-1}\dv\mathbb{P}\bigl(2\mu(\rho)d_j\bigr),\\
& \nabla B_j=-\na(-\D)^{-1}\curl([\sigma(\rho)-1]\curl B_j)+\na(-\D)^{-1}\curl(\sigma(\rho)\curl B_j)
\end{split}
\end{equation*}
where we used $\dv {B}_j=0$ so that  $\curl\curl B_j=-\Delta B_j.$

For $p\in [2, 6]$, we get, by  using \eqref{visco-resis-1} and the interpolation inequality: $\|f\|_{L^p(\mathbb{R}^3)}\lesssim \|f\|_{L^2(\mathbb{R}^3)}^{\f3p-\f12}\|\na f\|_{L^2(\mathbb{R}^3)}^{3\bigl(\f12-\f1p\bigr)},$ that
  \begin{equation*}\label{est-basic-2-20}
\begin{split}
&\|\nabla u_j\|_{L^p}
\le
C_1\Bigl( \|\mu(\rho_0)-1\|_{L^\infty}
\|\nabla u_j\|_{L^p} +\|\nabla u_j\|_{L^2}^{\frac{3}{p}-\frac{1}{2}}
\|\mathbb{P}\dv\bigl(2\mu(\rho)d_j\bigr)
\|_{L^2}^{3\bigl(\frac{1}{2}-\frac{1}{p}\bigr)}\Bigr),\\
&
\|\nabla B_j\|_{L^p}
\le C_1
\Bigl( \|\sigma(\rho_0)-1\|_{L^\infty}\|\nabla B_j\|_{L^p}
+\|\nabla B_j\|_{L^2}^{\frac{3}{p}-\frac{1}{2}}
\|\curl\bigl(\sigma(\rho)\curl B_j\bigr)
\|_{L^2}^{3\bigl(\frac{1}{2}-\frac{1}{p}\bigr)}\Bigr),
\end{split}
\end{equation*}
for some uniform constant $C_1$.

By taking $\e_0$
sufficiently small in \eqref{small-data-1}, we obtain for $2\leq p\leq 6$  that
  \begin{equation*}\label{est-basic-2-21}
\begin{split}
&\|\nabla u_j\|_{L^p}
\le
C \|\nabla u_j\|_{L^2}^{\frac{3}{p}-\frac{1}{2}}
\|\mathbb{P}\dv\bigl(2\mu(\rho)d_j\bigr)
\|_{L^2}^{3\bigl(\frac{1}{2}-\frac{1}{p}\bigr)},\\
&
\|\nabla B_j\|_{L^p}
\le  C\|\nabla B_j\|_{L^2}^{\frac{3}{p}-\frac{1}{2}}
\|\curl\bigl(\sigma(\rho)\curl B_j\bigr)
\|_{L^2}^{3\bigl(\frac{1}{2}-\frac{1}{p}\bigr)},
\end{split}
\end{equation*}
which along with the $u_j$ and $B_j$ equations in \eqref{model-3d-freq-1} implies
  \begin{equation*}\label{est-basic-2-22}
\begin{split}
\|\nabla u_j\|_{L^p}
 &\lesssim
\|\nabla u_j\|_{L^2}^{\frac{3}{p}-\frac{1}{2}}
\bigl\|(\rho\partial_tu_j+\rho(u\cdot\nabla)u_j-(B\cdot\nabla)B_j
)\bigr\|_{L^2}^{3(\frac{1}{2}-\frac{1}{p})} \\&
\lesssim \|\nabla u_j\|_{L^2}^{\frac{3}{p}-\frac{1}{2}}
\bigl(\|\sqrt{\rho}\partial_tu_j\|_{L^2}
+\|(u, B)\|_{L^3}
\|(\nabla u_j, \nabla B_j)\|_{L^6} \bigr)^{3(\frac{1}{2}-\frac{1}{p})}
\\
\|\nabla B_j\|_{L^p} &\lesssim
\|\nabla B_j\|_{L^2}^{\frac{3}{p}-\frac{1}{2}}
\bigl\|(\partial_tB_j+(u\cdot\nabla)B_j-(B\cdot\nabla)u_j)\bigr\|_{L^2}^{3(\frac{1}{2}-\frac{1}{p})}
\\&
\lesssim \|\nabla B_j\|_{L^2}^{\frac{3}{p}-\frac{1}{2}}
\bigl(\|\partial_tB_j\|_{L^2}
+\|(u, B)\|_{L^3}
\|(\nabla u_j, \nabla B_j\bigr)\|_{L^6} )^{3(\frac{1}{2}-\frac{1}{p})}.
\end{split}
\end{equation*}
In particular, we obtain
  \begin{equation*}\label{est-basic-2-22}
\begin{split}
&\|(\nabla u_j, \nabla B_j)\|_{L^4}
\lesssim
\|(\nabla u_j, \nabla B_j)\|_{L^2}^{\frac{1}{4}}\Bigl(
\|(\partial_tu_j, \partial_tB_j)\|_{L^2}^{\frac{3}{4}}
+\|(u, B)\|_{L^3}^{\frac{3}{4}} \|(\nabla u_j, \nabla B_j)\|_{L^6}^{\f34}\Bigr),\\
&\|(\nabla u_j, \nabla B_j)\|_{L^6}
\lesssim
\|(\partial_tu_j, \partial_tB_j)\|_{L^2}
+\|(u, B)\|_{L^3}
\|(\nabla u_j, \nabla B_j)\|_{L^6}.
\end{split}
\end{equation*}
Then under the assumption of \eqref{small-assump-u-1},
 we obtain, for any $t\in [0, T^{\star}[$
\begin{equation}\label{est-basic-2-23}
\begin{split}
&\|(\nabla u_j, \nabla B_j)(t)\|_{L^6}
\lesssim
\|(\partial_tu_j, \partial_tB_j)(t)\|_{L^2},\\
&\|(\nabla u_j, \nabla B_j)(t)\|_{L^4}
\lesssim
\|(\nabla u_j, \nabla B_j)(t)\|_{L^2}^{\frac{1}{4}}
\|(\partial_tu_j, \partial_tB_j)(t)\|_{L^2}^{\frac{3}{4}}.
\end{split}
\end{equation}

\no{\bf Step 5.} The closing estimate of $ (\na u_j, \na B_j).$

We first get,
by summing up the inequalities \eqref{est-basic-2-13} and \eqref{est-basic-2-18}, that
  \begin{equation}\label{est-basic-2-19a}
\begin{split}
&\f{d}{dt}\|(\sqrt{\mu(\rho)}\, d_j,\, \sqrt{\sigma(\rho)}\,\curl{B}_j)(t)\|_{L^2}^2 +2c_1\|(\partial_tu_j,\,\partial_tB_j)\|_{L^2}^2\\
&\lesssim
\|\nabla u\|_{L^2}\|\nabla u_j\|_{L^4}^2 +\|(u,B)\|_{L^3}^2
\|(\nabla{u}_j, \nabla{B}_j)\|_{L^6}^2.
\end{split}
\end{equation}
By substituting \eqref{est-basic-2-23} into \eqref{est-basic-2-19a}, we arrive at
  \begin{equation*}
\begin{split}
&\f{d}{dt}\|(\sqrt{\mu(\rho)}\, d_j,\, \sqrt{\sigma(\rho)}\,\curl{B}_j)(t)\|_{L^2}^2 +2c_1\|(\partial_tu_j,\,\partial_tB_j)\|_{L^2}^2\\
&\lesssim
\|\nabla u\|_{L^2}\|(\nabla u_j, \nabla B_j)\|_{L^2}^{\frac{1}{2}}
\|(\partial_tu_j, \partial_tB_j)\|_{L^2}^{\frac{3}{2}} +\|(u,B)\|_{L^3}^2
\|(\partial_tu_j, \partial_tB_j)\|_{L^2}^2.
\end{split}
\end{equation*}
Thanks to Young's inequality, one gets
  \begin{equation}\label{est-basic-2-24}
\begin{split}
&\f{d}{dt}\|(\sqrt{\mu(\rho)}\, d_j,\, \sqrt{\sigma(\rho)}\,\curl{B}_j)\|_{L^2}^2 +c_1\|(\partial_tu_j,\,\partial_tB_j)\|_{L^2}^2\\
&\leq C_3(
\|\nabla u\|_{L^2}^4\|(\sqrt{\mu(\rho)}\, d_j,\, \sqrt{\sigma(\rho)}\,\curl{B}_j)\|_{L^2}^2+\|(u,B)\|_{L^3}^2
\|(\partial_tu_j, \partial_tB_j)\|_{L^2}^2).
\end{split}
\end{equation}
By integrating the above inequality over $[0, T]$ for $T\in[0, T^\star[,$ we find
  \begin{equation*}\label{est-basic-2-24aa}
\begin{split}
&\|(\sqrt{\mu(\rho)}\, d_j,\, \sqrt{\sigma(\rho)}\,\curl{B}_j)\|_{L^{\infty}_T(L^2)}^2 +c_1\|(\partial_tu_j,\,\partial_tB_j)\|_{L^2_T(L^2)}^2
\\
&\leq C_4\|(\nabla \dot\Delta_ju_0, \nabla \dot\Delta_j B_0)\|_{L^2}^2+C_3
\|\nabla u\|_{L^4_T(L^2)}^4\|(\sqrt{\mu(\rho)}\, d_j,\, \sqrt{\sigma(\rho)}\,\curl{B}_j)\|_{L^{\infty}_T(L^2)}^2\\
&\qquad+C_3\|(u,B)\|_{L^{\infty}_T(L^3)}^2
\|(\partial_tu_j,\,\partial_tB_j)\|_{L^2_T(L^2)}^2.
\end{split}
\end{equation*}
Taking $\mathfrak{c}_2>0$ in \eqref{small-assump-u-1} to be so small that $\mathfrak{c}_2 \leq \min\{(\frac{c_1}{4C_3})^{\frac{1}{2}},\,(\frac{c_1}{4C_3})^{\frac{1}{4}}\}$, we obtain that
 \begin{equation*}\label{est-basic-2-26}
\|(\nabla{u}_j, \nabla{B}_j)\|_{L^{\infty}_T(L^2)}^2+\|(\partial_tu_j, \partial_tB_j)\|_{L^2_T(L^2)}^2
\lesssim
 \|(\nabla \dot\Delta_ju_0, \nabla \dot\Delta_j B_0)\|_{L^2}^2.
\end{equation*}
This together with Lemma \ref{lem2.1} leads to the second inequality of \eqref{S2eq1}.

Next let us turn to the proof of \eqref{est-variable-j-1}.
Indeed we get, by multiplying \eqref{est-basic-2-24} by $t,$ that
  \begin{equation*}\label{est-basic-2-28}
\begin{split}
&\f{d}{dt} \|t^{\frac{1}{2}}\,(\sqrt{\mu(\rho)}\, d_j,\, \sqrt{\sigma(\rho)}\,\curl{B}_j)(t)\|_{L^2}^2 +c_1\|t^{\frac{1}{2}}\,(\partial_tu_j,\,\partial_tB_j)\|_{L^2}^2\\
&\leq \|(\sqrt{\mu(\rho)}\, d_j,\, \sqrt{\sigma(\rho)}\,\curl{B}_j)\|_{L^2}^2 +C_3\|\nabla u\|_{L^2}^4\|t^{\frac{1}{2}}\,(\sqrt{\mu(\rho)}\, d_j,\, \sqrt{\sigma(\rho)}\,\curl{B}_j)\|_{L^2}^2\\
&\qquad+C_3\|(u,B)\|_{L^3}^2
\|t^{\frac{1}{2}}\,(\partial_tu_j, \partial_tB_j)\|_{L^2}^2.
\end{split}
\end{equation*}
Integrating the above inequality over $[0, T]$ for $T\in[0, T^\star[$ and using \eqref{S2eq1} and \eqref{small-assump-u-1}, we find
  \begin{equation*}\label{est-basic-2-29}
  \begin{split}
\|t^{\frac{1}{2}}\,(\nabla u_j, \nabla B_j)\|_{L^\infty_T(L^2)}^2+\|t^{\frac{1}{2}}\,(\partial_tu_j,\partial_tB_j)\|_{L^2_T(L^2)}^2
&\lesssim \|(\nabla u_j, \nabla B_j)\|_{L^2_T(L^2)}^2\\
&\lesssim
\|(\dot\Delta_ju_0,\dot\Delta_jB_0)\|_{L^2}^2,
\end{split}
\end{equation*}
which leads to the first inequality of \eqref{est-variable-j-1}.

Finally for $T\leq  T^{\star}$, $\alpha\in]0, {1}/{2}[,$ we observe from \eqref{S2eq1} that if $T \leq  2^{-2j}$
\begin{align*}
&\int_0^T{t}^{-2\alpha}\|(\nabla u_j,\nabla B_j)(t)\|_{L^2}^2\,dt\\
&\lesssim
\int_0^{T}{t}^{-2\alpha}\,dt \,2^{2j}\|(\dot\Delta_ju_0,\dot\Delta_jB_0)\|_{L^2}^2
\lesssim
{T}^{1-2\alpha}
2^{2j}\|(\dot\Delta_ju_0,\dot\Delta_jB_0)\|_{L^2}^2\\
&\lesssim 2^{4\alpha j}\|(\dot\Delta_ju_0,\dot\Delta_jB_0)\|_{L^2}^2 \quad (\mbox{since}\,\,1-2\alpha>0),
\end{align*}
and if $2^{-2j}\leq T\leq T^{\star}$
\begin{align*}
&\int_0^T{t}^{-2\alpha}\|(\nabla u_j,\nabla B_j)(t)\|_{L^2}^2dt\\
&\le
\int_0^{2^{-2j}}{t}^{-2\alpha}\|(\nabla u_j,\nabla B_j)(t)\|_{L^2}^2dt
+\int_{2^{-2j}}^T {t}^{-2\alpha}\|(\nabla u_j,\nabla B_j)(t)\|_{L^2}^2dt
\\&
\lesssim
(2^{-2j})^{1-2\alpha}
2^{2j}\|(\dot\Delta_ju_0,\dot\Delta_jB_0)\|_{L^2}^2
+(2^{-2j})^{-2\alpha}\|(\dot\Delta_ju_0,\dot\Delta_jB_0)\|_{L^2}^2\\
&\lesssim
2^{4\alpha j}\|(\dot\Delta_ju_0,\dot\Delta_jB_0)\|_{L^2}^2.
\end{align*}
As a consequence, we obtain the second inequality of \eqref{est-variable-j-1}.
 We thus complete
the proof of Lemma \ref{S2lem1}.
\end{proof}

\begin{prop}\label{prop-vaviable-1}{\sl Under the assumptions of Theorem \ref{thm-GWS-MHD}, for any $ T\in [0, T^{\ast}[$,
 there holds  \eqref{est-variable-2}.
}
\end{prop}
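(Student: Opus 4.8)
The plan is to sum up the frequency–block estimates of Lemma~\ref{S2lem1} along the decomposition \eqref{identity-1}, the key being that the right–hand sides in \eqref{S2eq1}--\eqref{est-variable-j-1}, once multiplied by $2^{j/2}$, are governed by the sequence $c_j\eqdef 2^{j/2}\|(\dot\Delta_ju_0,\dot\Delta_jB_0)\|_{L^2}$, which lies in $\ell^2(\Z)$ with $\|(c_j)\|_{\ell^2}\sim\|(u_0,B_0)\|_{\dot H^{\frac12}}$. I would work on $[0,T]$ with $T<T^\star$; once \eqref{est-variable-2} is established there, its first line gives in particular $\|(u,B)\|_{L^\infty_T(L^3)}+\|\nabla u\|_{L^4_T(L^2)}\lesssim\|(u_0,B_0)\|_{\dot H^{\frac12}}\le\mathfrak c$, which is $<\mathfrak c_2$ as soon as $\mathfrak c$ is chosen small in \eqref{small-data-1}; since $T\mapsto\|(u,B)\|_{L^\infty_T(L^3)}+\|\nabla u\|_{L^4_T(L^2)}$ is continuous, a standard continuation argument then forces $T^\star=T^\ast$, so \eqref{est-variable-2} holds on all of $[0,T^\ast[$. (The smallness of $\varepsilon_0$ has already been consumed in Step~4 of the proof of Lemma~\ref{S2lem1} to absorb the elliptic corrections from $\mu(\rho)-1$ and $\sigma(\rho)-1$.)

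The device that keeps the summation at the $\dot H^{\frac12}$ level — rather than at the strictly smaller $\dot B^{\frac12}_{2,1}$ level that a bare triangle inequality would produce — is the quasi–orthogonality coming from Bernstein's inequality: for all $j,k\in\Z$,
\begin{equation*}
\|\dot\Delta_j(u_k,B_k)(t)\|_{L^2}\lesssim\min\bigl\{\,\|(u_k,B_k)(t)\|_{L^2},\ 2^{-j}\|(\nabla u_k,\nabla B_k)(t)\|_{L^2}\,\bigr\},
\end{equation*}
so that \eqref{S2eq1} yields $2^{j/2}\|\dot\Delta_j(u_k,B_k)\|_{L^\infty_T(L^2)}\lesssim 2^{-\frac12|j-k|}c_k$ (the first bound serving for $k\ge j$, the second for $k<j$). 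Summing over $k$ via \eqref{identity-1} and invoking Young's inequality for the convolution against the $\ell^1$ kernel $(2^{-\frac12|\cdot|})$ gives $\|(u,B)\|_{\widetilde L^\infty_T(\dot H^{\frac12})}\lesssim\|(u_0,B_0)\|_{\dot H^{\frac12}}$, after which the Sobolev embedding $\dot H^{\frac12}(\R^3)\hookrightarrow L^3(\R^3)$ also controls $\|(u,B)\|_{L^\infty_T(L^3)}$.

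Each remaining norm in \eqref{est-variable-2} follows by the same scheme combined with three routine tools. First, interpolation in the time weight: interpolating $\|{t}^{\frac12}(\nabla u_j,\nabla B_j)\|_{L^\infty_T(L^2)}\lesssim\|(\dot\Delta_ju_0,\dot\Delta_jB_0)\|_{L^2}$ (from \eqref{est-variable-j-1}) against $\|(\nabla u_j,\nabla B_j)\|_{L^\infty_T(L^2)}\lesssim 2^{j}\|(\dot\Delta_ju_0,\dot\Delta_jB_0)\|_{L^2}$ (from \eqref{S2eq1}) produces the weight ${t}^{\frac14}$; interpolating the $L^2_T(L^2)$ and $L^\infty_T(L^2)$ bounds on $(\nabla u_j,\nabla B_j)$ in $L^p_T$ gives the $L^4_T(L^2)$ bound; \eqref{est-variable-j-1} with $\alpha=\frac14$ gives the ${t}^{-\frac14}L^2_T(L^2)$ bound; and a Cauchy--Schwarz in time upgrades the two bounds on $(\partial_tu_j,\partial_tB_j)$ to $\|{t}^{\frac14}(\partial_tu_j,\partial_tB_j)\|_{L^2_T(L^2)}\lesssim 2^{\frac j2}\|(\dot\Delta_ju_0,\dot\Delta_jB_0)\|_{L^2}$. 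Second, the Littlewood--Paley square–function identity $\|f(t)\|_{\dot H^s}^2=\sum_j2^{2js}\|\dot\Delta_jf(t)\|_{L^2}^2$ together with Minkowski's inequality (legitimate since all the time exponents appearing are $\ge2$), which reduces any $L^p_T(\dot H^s)$–type norm to an $\ell^2$–weighted sum of $\|\dot\Delta_j(\cdot)\|_{L^p_T(L^2)}$, upon which the Bernstein/convolution argument above runs again; the $L^6$– and $L^\infty$–in–space block bounds are supplied by $\|\nabla(u_j,B_j)\|_{L^6}\lesssim\|(\partial_tu_j,\partial_tB_j)\|_{L^2}$ and $\|(u_j,B_j)\|_{L^\infty}\lesssim\|\nabla(u_j,B_j)\|_{L^2}^{\frac12}\|\nabla(u_j,B_j)\|_{L^6}^{\frac12}$, both already available from Step~4 of the proof of Lemma~\ref{S2lem1}. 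Third, for the two ``pressure–type'' forcing terms I would substitute the equations of \eqref{1.2}, writing $\nabla\Pi-2\dv(\mu(\rho)d)=-\rho(\partial_tu+u\cdot\nabla u)+(B\cdot\nabla)B$ and $\curl(\sigma(\rho)\curl B)=-\partial_tB-u\cdot\nabla B+(B\cdot\nabla)u$, so that their ${t}^{\frac14}L^2_T(L^2)$ norms are bounded by $\|{t}^{\frac14}(\partial_tu,\partial_tB)\|_{L^2_T(L^2)}$ plus the bilinear pieces $\|(u,B)\|_{L^\infty_T(L^3)}\|{t}^{\frac14}(\nabla u,\nabla B)\|_{L^2_T(L^6)}$, which are absorbed using the smallness in \eqref{small-assump-u-1}. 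The three $L^p$–in–space norms $\|\nabla(u,B)\|_{L^2_T(L^3)}$, $\|(u,B)\|_{L^2_T(L^\infty)}$ and their companions then follow from $L^3=[L^2,L^6]_{1/2}$ and $\|g\|_{L^\infty(\R^3)}\lesssim\|\nabla g\|_{L^2}^{\frac12}\|\nabla g\|_{L^6}^{\frac12}$ together with Cauchy--Schwarz in time, e.g. $\|\nabla u\|_{L^2_T(L^3)}^2\lesssim\|{t}^{-\frac14}\nabla u\|_{L^2_T(L^2)}\|{t}^{\frac14}\nabla u\|_{L^2_T(L^6)}$ and $\|u\|_{L^2_T(L^\infty)}^2\lesssim\|{t}^{-\frac14}\nabla u\|_{L^2_T(L^2)}\|{t}^{\frac14}\nabla u\|_{L^2_T(L^6)}$.

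The step I expect to be the main obstacle is precisely the $\ell^2$ bookkeeping: at each occurrence one must pair the two Bernstein estimates so as to manufacture genuine off–diagonal decay in $|j-k|$ \emph{before} summing, while simultaneously tracking the correct power of $t$. The most delicate point is obtaining the ${t}^{\frac14}L^2_T(L^6)$ bound on $(\nabla u,\nabla B)$ — on which the $L^2_T(L^3)$ and $L^2_T(L^\infty)$ bounds rest — within this framework, since there the $L^6$ norm must pass through the square function and Minkowski's inequality and the frequency–localization gain inherited from the (merely almost–parabolic, variable–coefficient) block system is only borderline; all the other quantities, including $\|{t}^{\frac14}(\nabla u,\nabla B)\|_{L^\infty_T(L^2)}$, follow from the same three tools with only notational changes.
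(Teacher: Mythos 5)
Your overall architecture (sum the block estimates of Lemma~\ref{S2lem1}, close the continuation argument via the first line of \eqref{est-variable-2}, recover the pressure-type terms by substituting the equations, and interpolate for the $L^3$/$L^\infty$ norms) matches the paper, and your treatment of $\|(u,B)\|_{\widetilde L^\infty_T(\dot H^{1/2})}$ via the two Bernstein bounds is exactly what the paper does. But there is a genuine gap in your summation mechanism for the time-weighted gradient norms. The square-function-plus-Minkowski reduction only works when the target regularity sits \emph{strictly between} the derivative counts available on the blocks: for $\widetilde L^\infty_T(\dot H^{1/2})$ you get $2^{-|j-k|/2}c_k$ because $s=\tfrac12$ lies between the $L^2$ and $\dot H^1$ block bounds. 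For $\|t^{1/4}\nabla u\|_{L^\infty_T(L^2)}$ the target is $\dot H^1$, i.e.\ the top of the available range: for $k\ge j$ the best you can do is $2^k t^{1/4}\|\dot\Delta_k u_j\|_{L^2}\le t^{1/4}\|\nabla u_j\|_{L^2}\lesssim\|\nabla u_j\|_{L^2}^{1/2}\|t^{1/2}\nabla u_j\|_{L^2}^{1/2}\lesssim c_j$, with \emph{no} decay in $k-j$; producing decay there would require a bound on $\nabla^2 u_j$, which Lemma~\ref{S2lem1} does not provide in the variable-viscosity case. The same obstruction kills your route to $\|t^{-1/4}\nabla u\|_{L^2_T(L^2)}$ (the single block bound with $\alpha=\tfrac14$ gives only $\sum_j c_j$, which diverges for $(c_j)\in\ell^2$), and to $\|t^{1/4}(u_t,B_t)\|_{L^2_T(L^2)}$ if summed by the triangle inequality.

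The device the paper uses instead — and which your proposal is missing — is to expand the square of the sum directly, $\|t^{1/4}\nabla u(t)\|_{L^2}^2=\sum_{j,k}\int_{\R^3}t^{1/2}\nabla u_j\cdot\nabla u_k\,dx$, restrict to $j\le k$ by symmetry, and place the time weight asymmetrically: the factor $t^{1/2}$ goes entirely on the higher-frequency piece, so that $\|t^{1/2}\nabla u_k\|_{L^\infty_T(L^2)}\lesssim 2^{-k/2}c_k$ pairs with $\|\nabla u_j\|_{L^\infty_T(L^2)}\lesssim 2^{j/2}c_j$ and the product $2^{-(k-j)/2}c_jc_k$ is summable by Schur/Young. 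The off-diagonal decay thus comes from the \emph{ratio of the weighted to the unweighted block bounds}, not from extra derivatives. For $\|t^{-1/4}\nabla u\|_{L^2_T(L^2)}$ the analogous splitting is $t^{-1/2}=t^{-(1/2-\varepsilon)}\cdot t^{-\varepsilon}$ with the two exponents of \eqref{est-variable-j-1} distributed over the two factors. Once $\|t^{-1/4}(\nabla u,\nabla B)\|_{L^2_T(L^2)}$ is in hand, the paper obtains $\|t^{1/4}(u_t,B_t)\|_{L^2_T(L^2)}$ and the $t^{1/4}L^2_T(L^6)$ bound not through the blocks at all but through a Gronwall estimate on the \emph{full} system (the analogue of \eqref{est-basic-2-24} for $(u,B)$, multiplied by $t^{1/2}$) followed by the pointwise elliptic bound $\|(\nabla u,\nabla B)(t)\|_{L^6}\lesssim\|(u_t,B_t)(t)\|_{L^2}$ — which disposes of the point you flagged as "most delicate" without any frequency bookkeeping. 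You should replace the square-function step for the weighted norms by this bilinear pairing; the rest of your outline then goes through.
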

\begin{proof}
We first observe from Lemma \ref{S2lem1} that
  \begin{equation*}\label{est-basic-2-31}
  \begin{split}
&\|(\nabla u_j,\nabla B_j)\|_{L^\infty_T(L^2)}
\lesssim
2^j\|(\dot\Delta_ju_0,\dot\Delta_jB_0)\|_{L^2}
\lesssim c_{j}
2^{\frac{j}{2}}\|(u_0,B_0)\|_{\dot H^{\frac{1}{2}}},\\
&
\|\sqrt{t}(\nabla u_j,\nabla B_j)\|_{L^\infty_T(L^2)}
\lesssim
\|(\dot\Delta_ju_0,\dot\Delta_jB_0)\|_{L^2}
\lesssim c_{j}
2^{-\frac{j}{2}}\|(u_0,B_0)\|_{\dot{H}^{\frac{1}{2}}}.
\end{split}
\end{equation*}
 Here and below, we always denote $\left(c_j\right)_{j\in\Z}$ to be a generic element of $\ell^2(\Z)$ so that $\sum_{j\in\Z}c_j^2=1.$ As a consequence, we deduce that
 for any $t\leq T <  T^\star,$ which is determined by \eqref{small-assump-u-1},
  \begin{equation*}\label{est-basic-2-32}
\begin{split}
 \|t^{\frac{1}{4}} \nabla u(t)\|_{L^2}^2
&=\sum_{j,k\in\Z}\int_{\R^3} t^{\frac{1}{2}} \nabla u_j(t)\cdot\nabla u_k(t)dx
\le
2\sum_{k\in\Z}\|t^{\frac{1}{2}} \nabla u_k(t)\|_{L^2}\sum_{j\le k}\|\nabla u_j(t)\|_{L^2}\\
&\le
2\sum_{k\in\Z}\|t^{\frac{1}{2}} \nabla u_k\|_{L^{\infty}_T(L^2)}\sum_{j\le k}\|\nabla u_j\|_{L^{\infty}_T(L^2)} \\
&\lesssim \|(u_0,B_0)\|_{\dot H^{\frac{1}{2}}}^2 \sum_{k\in\Z}2^{-\frac{k}{2}}c_{k} \sum_{j\le k}2^{\frac{j}{2}}c_{j}
\lesssim
\|(u_0,B_0)\|_{\dot H^{\frac{1}{2}}}^2.
\end{split}
\end{equation*}
Along the same line, we obtain $\|t^{\frac{1}{2}} \nabla B(t)\|_{L^{\infty}_T(L^2)}^2
\lesssim
\|(u_0,B_0)\|_{\dot H^{\frac{1}{2}}}^2$ for any $T<T^\star.$ Hence we obtain
  \begin{equation}\label{est-basic-2-34}
\|t^{\frac{1}{2}} (\nabla u, \nabla B) \|_{L^{\infty}_T(L^2)}^2
\lesssim
\|(u_0,B_0)\|_{\dot H^{\frac{1}{2}}}^2\quad\mbox{for any}\ \ T <  T^\star.
\end{equation}

While for $\varepsilon\in(0,\frac{1}{4}),$  it follows from the second inequality of \eqref{est-variable-j-1} that for any $T < T^\star$
  \begin{align*}
&\|{t}^{-\left(\frac{1}{2}-\varepsilon\right)}(\nabla u_j,\nabla B_j)\|_{L^2_T(L^2)}
\lesssim
2^{j\left(\frac{1}{2}-2\varepsilon\right)}c_{j}\|(u_0,B_0)\|_{\dot H^{\frac{1}{2}}},\\
&
\|{t}^{-\varepsilon}(\nabla u_j,\nabla B_j)\|_{L^2_T(L^2)}
\lesssim
2^{j\left(2\varepsilon-\frac{1}{2}\right)}c_{j}\|(u_0,B_0)\|_{\dot H^{\frac{1}{2}}},
\end{align*}
which implies
  \begin{equation*}\label{est-basic-2-36}
\begin{split}
\|{t}^{-\frac{1}{4}}\nabla u\|_{L^2_T(L^2)}^2
&\le
2\sum_{k\in \mathbb{Z}}\sum_{j\le k}
\|{t}^{-(\frac{1}{2}-\varepsilon)}\nabla u_j\|_{L^2_T(L^2)}
\|{t}^{-\varepsilon}\nabla u_k\|_{L^2_T(L^2)}
\\&
\lesssim
\|(u_0,B_0)\|_{\dot H^{\frac{1}{2}}}^2
\sum_{k\in \mathbb{Z}}\sum_{j\le k}
2^{j\left(\frac{1}{2}-2\varepsilon\right)}c_{j}
2^{k\left(2\varepsilon-\frac{1}{2}\right)}c_{k}
\lesssim
\|(u_0,B_0)\|_{\dot H^{\frac{1}{2}}}^2.
\end{split}
\end{equation*}
The same procedure yields $\|{t}^{-\frac{1}{4}}\nabla B\|_{L^2_T(L^2)}\lesssim \|(u_0,B_0)\|_{\dot H^{\frac{1}{2}}}.$ As
a result, it comes out
  \begin{equation}\label{est-basic-2-37}
\begin{split}
\|{t}^{-\frac{1}{4}}(\nabla u, \nabla B)\|_{L^2_T(L^2)}
\lesssim
\|(u_0,B_0)\|_{\dot H^{\frac{1}{2}}}\quad\mbox{for any}\ \ T < T^\star.
\end{split}
\end{equation}

Thanks to \eqref{est-basic-2-34} and \eqref{est-basic-2-37}, we deduce that for any $T< T^\star$
  \begin{equation}\label{est-basic-2-38}
\begin{split}
\|(\nabla u,\nabla B)\|_{L^4_T(L^2)}
\le
\|{t}^{-\frac{1}{4}}(\nabla u,\nabla B)\|_{L^2_T(L^2)}^{\frac{1}{2}}
\|{t}^{\frac{1}{4}}(\nabla u,\nabla B)\|_{L^\infty_T(L^2)}^{\frac{1}{2}}
\lesssim
\|(u_0,B_0)\|_{\dot H^{\frac{1}{2}}}.
\end{split}
\end{equation}
Whereas it follows from Lemma \ref{S2lem1} and Lemma \ref{lem2.1} that for any $T< T^\star$
  \begin{equation*}\label{est-basic-2-39}
\begin{split}
&\|(u,B)\|_{\widetilde L^\infty_T(\dot H^{\frac{1}{2}})}^2
 \lesssim
\sum_{(j,k)\in\Z^2}2^k\|(\dot\Delta_ku_j,\dot\Delta_k B_j)\|_{L^\infty_T(L^2)}^2
\\&
\lesssim
\sum_{(j,k)\in\Z^2, j\le k}2^{-k}\|(\dot\Delta_k\nabla u_j,\dot\Delta_k\nabla B_j)\|_{L^\infty_T(L^2)}^2
+\sum_{(j,k)\in\Z^2, k\le j}2^k\|(\dot\Delta_ku_j,\dot\Delta_k B_j)\|_{L^\infty_T(L^2)}^2
\\&
\lesssim
\|(u_0,B_0)\|_{\dot H^{\frac{1}{2}}}^2
\sum_{(j,k)\in\Z^2}2^{-|k-j|}c_{k}^2c_{j}^2\lesssim
\|(u_0,B_0)\|_{\dot H^{\frac{1}{2}}}^2,
\end{split}
\end{equation*}
which together with  Sobolev  embedding: $\dot{H}^{\frac{1}{2}}(\mathbb{R}^3) \hookrightarrow L^3(\mathbb{R}^3),$  ensures that for any $T< T^\star$
  \begin{equation}\label{est-basic-2-40}
\begin{split}
&\|(u,B)\|_{L^\infty_T(L^3)}  \lesssim\|(u,B)\|_{\widetilde L^\infty_T(\dot H^{\frac{1}{2}})} \lesssim
\|(u_0,B_0)\|_{\dot H^{\frac{1}{2}}}^2.
\end{split}
\end{equation}
Thanks to \eqref{small-assump-u-1}, \eqref{est-basic-2-38} and \eqref{est-basic-2-40},  we get, by using the classical continuous  argument,  that $T^{\star}=T^{\ast}$ provided that $\frak{c}$ is sufficiently small in
\eqref{small-data-1}.

On the other hand, in view of \eqref{1.2}, we get, by a similar derivation of  \eqref{est-basic-2-23} and \eqref{est-basic-2-24}
that  for any $t \in ]0, T^{\ast}[$,
\begin{equation}\label{est-basic-2-41}
\begin{split}
&\|(\nabla u(t), \nabla B(t))\|_{L^6}
\lesssim
\|(u_t(t), B_t(t))\|_{L^2} \andf\\
&\|(\nabla\Pi-2\dv (\mu(\rho) d),\curl (\sigma(\rho)\curl{B}))(t)\|_{L^2}\\
&\lesssim \|(u_t, B_t)(t)\|_{L^2}+\|(u, B)\|_{L^\infty_t(L^3)}\|(\nabla u, \nabla B)(t)\|_{L^6} \lesssim \|(u_t, B_t)(t)\|_{L^2},
\end{split}
\end{equation}
and
  \begin{equation}\label{est-basic-2-42}
\begin{split}
\f{d}{dt}\|(\sqrt{\mu(\rho)}\, d,\, \sqrt{\sigma(\rho)}\,\curl{B})\|_{L^2}^2 +&c_2\|(u_t,\,B_t)\|_{L^2}^2\\
&\leq C_4
\|\nabla u\|_{L^2}^4\|(\sqrt{\mu(\rho)}\, d,\, \sqrt{\sigma(\rho)}\,\curl{B})\|_{L^2}^{2}.
\end{split}
\end{equation}
Multiplying \eqref{est-basic-2-42} by $t^{\f12}$ yields
  \begin{equation*}\label{est-basic-2-43}
\begin{split}
&\f{d}{dt}\|t^{\frac{1}{4}}\,(\sqrt{\mu(\rho)}\, d,\, \sqrt{\sigma(\rho)}\,\curl{B})\|_{L^2}^2 +c_2\|t^{\frac{1}{4}}\,(u_t,\,B_t)\|_{L^2}^2\\
&\leq C\|t^{-\frac{1}{4}}\,(\nabla u,\, \nabla{B})\|_{L^2}^2 + C_4
\|\nabla u\|_{L^2}^4\|t^{\frac{1}{4}}\,(\sqrt{\mu(\rho)}\, d,\, \sqrt{\sigma(\rho)}\,\curl{B})\|_{L^2}^{2}.
\end{split}
\end{equation*}
Applying Gronwall's inequality gives rise to
   \begin{equation*}\label{est-basic-2-44}
\begin{split}
 \|t^{\frac{1}{4}}\,(\sqrt{\mu(\rho)}\, d,\, \sqrt{\sigma(\rho)}\,&\curl{B})\|_{L^{\infty}_T(L^2)}^2 +\|t^{\frac{1}{4}}\,(u_t,\,B_t)\|_{L^2_T(L^2)}^2\\
&\lesssim C\|t^{-\frac{1}{4}}\,(\nabla u,\, \nabla{B})\|_{L^2_T(L^2)}^2 \exp\left(C_4\|\nabla u\|_{L^4_T(L^2)}^4 \right),
\end{split}
\end{equation*}
from which,  \eqref{est-basic-2-37} and \eqref{est-basic-2-38}, we infer
   \begin{equation}\label{est-basic-2-45}
\begin{split}
 \|t^{\frac{1}{4}}\,(\nabla u,\, \nabla{B})\|_{L^{\infty}_T(L^2)}^2 +\|t^{\frac{1}{4}}\,(u_t,\,B_t)\|_{L^2_T(L^2)}^2
 \lesssim \|(u_0,B_0)\|_{\dot H^{\frac{1}{2}}}^2.
\end{split}
\end{equation}

 Thanks to \eqref{est-basic-2-41} and \eqref{est-basic-2-45}, we obtain
\begin{equation}\label{est-basic-2-46}
\begin{split}
\|t^{\frac{1}{4}}\,(\nabla u, \nabla B)\|_{L^2_T(L^6)}+ \bigl\|t^{\frac{1}{4}}\,(\nabla\Pi-2\dv (\mu(\rho) d),&\curl (\sigma(\rho)\curl{B}))\bigr\|_{L^2_T(L^2)}\\
&\qquad\qquad\qquad\lesssim \|(u_0, B_0)\|_{\dot H^{\frac{1}{2}}},
\end{split}
\end{equation}
 from which,  \eqref{est-basic-2-37} and the interpolation inequality: $$\|\nabla f\|_{L^3(\mathbb{R}^3)}+\|f\|_{L^{\infty}(\mathbb{R}^3)} \lesssim \| \nabla f\|_{L^2(\mathbb{R}^3)}^{\frac{1}{2}}
\|\nabla f\|_{L^6(\mathbb{R}^3)}^{\frac{1}{2}},$$   we deduce that
  \begin{equation}\label{est-basic-2-48}
\begin{split}
&\|(\nabla u,\nabla B)\|_{L^2_T(L^3)}^2+\|(u,B)\|_{L^2_t(L^\infty)}^2 \\
&\lesssim \|t^{-\frac{1}{4}} (\nabla u,\nabla B)\|_{L^2_T(L^2)}^{\frac{1}{2}}
\|t^{\frac{1}{4}} (\nabla u,\nabla B)\|_{L^2_T(L^6)}^{\frac{1}{2}}
\lesssim
\|(u_0,B_0)\|_{\dot H^{\frac{1}{2}}}^2.
\end{split}
\end{equation}

By summarizing the estimates (\ref{est-basic-2-34}-\ref{est-basic-2-40}) and (\ref{est-basic-2-45}-\ref{est-basic-2-48}),
we conclude the proof of  \eqref{est-variable-2}.
This completes the proof of Proposition \ref{prop-vaviable-1}.
\end{proof}

\subsection{Constant viscosity case}

Let us first derive the equation satisfied by $D_tB.$ Indeed by applying $D_t=\partial_t+(u\cdot\nabla)$ to the  magnetic equation of \eqref{1.2}, we find
\begin{equation}\label{Dt-eqns-B-1}
D_t\bigl(D_t{B}+\curl(\sigma(\rho)\curl\,{B})\bigr)=D_t\bigl( B\cdot\nabla{u}\bigr).
\end{equation}
Notice that
\begin{equation*}\label{Dt-eqns-B-3}
\begin{split}
&[D_t; \nabla]f=\bigl(\partial_t\nabla{f}+u\cdot\nabla\,\nabla{f}\bigr)-\nabla \bigl(\partial_t{f}+u\cdot\nabla\,{f}\bigr)=-\nabla u^i\nabla_i\,{f},
\end{split}
\end{equation*}
from which, we infer
\begin{equation*}\label{Dt-eqns-B-4}
\begin{split}
D_t(B\cdot\nabla{u})&=D_t B\cdot\nabla{u} +B\cdot\,\nabla\,D_t{u}+B\cdot\,[D_t;\nabla]{u}\\
&=(D_t{B}\cdot\nabla) u
+(B\cdot\nabla) D_t{u}
-\bigl[\left((B\cdot\nabla)u\right)\cdot\nabla\bigr]u.
\end{split}
\end{equation*}
Next let's compute $D_t \curl(\sigma(\rho)\curl\,{B})$. We first observe that
\begin{equation*}\label{Dt-eqns-B-6}
\begin{split}
[D_t; \curl]\,f&= D_t (\curl \,f)-\curl \,D_tf=u\cdot \nabla (\curl \,f)-\curl \,(u\cdot \nabla\,f)\\
&=-\nabla\,u^{i} \wedge \partial_{i} f,
\end{split}
\end{equation*}
so that we get, by using the transport equation of \eqref{1.2}, that
\begin{align*}
D_t &\left(\curl(\sigma(\rho)\curl\,{B})\right)\\
=&\curl \left(\sigma(\rho)D_t \curl\,{B})\right)+[D_t; \curl]\left(\sigma(\rho)\curl\,{B}\right)\\
=&\curl \left(\sigma(\rho) \curl \,D_t\,{B})\right)+\curl \left(\sigma(\rho)[D_t; \curl]\,{B}\right)+[D_t; \curl]\left(\sigma(\rho)\curl\,{B}\right)\\
=&\curl \left(\sigma(\rho) \curl \,D_t\,{B}\right)-\curl \left(\sigma(\rho)\nabla\,u^{i} \wedge \partial_{i}{B}\right)-\nabla\,u^{i} \wedge \partial_{i} (\sigma(\rho)\curl\,{B}).
\end{align*}

By substituting the above equalities into \eqref{Dt-eqns-B-1}, we obtain
\begin{equation}\label{Dt-eqns-B-8}
\begin{split}
\partial_tD_t{B}+&u\cdot\nabla D_t{B}
+\curl\bigl(\sigma(\rho)\curl\, D_t{B}\bigr)
=g
\with\\
g=&(D_t{B}\cdot\nabla) u
+(B\cdot\nabla) D_t{u}
-\bigl[\left((B\cdot\nabla)u\right)\cdot\nabla\bigr]u\\
&+\curl \left(\sigma(\rho)\nabla\,u^i \wedge \partial_i \,{B}\right)+\nabla\,u^i \wedge \partial_i \left(\sigma(\rho)\curl\,{B}\right).
\end{split}
\end{equation}

\begin{prop}\label{prop-const-priori-1}
{\sl Under the assumptions of Proposition \ref{prop-vaviable-1}, if in addition the viscosity coefficient $\mu \equiv 1$, then \eqref{est-const-basic-0}  holds for $T<T^\ast.$
 }
\end{prop}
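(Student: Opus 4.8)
The plan is to upgrade the a priori bounds of Proposition~\ref{prop-vaviable-1} under the extra hypothesis $\mu\equiv 1$, by exploiting two features that are available only in the constant‑viscosity case: first, the momentum equation becomes $\rho\partial_t u+\rho u\cdot\nabla u-\Delta u+\nabla\Pi=(B\cdot\nabla)B$, so that $-\Delta u+\nabla\Pi$ is controlled by $u_t$ and the lower‑order terms \emph{without} any loss coming from $\nabla\mu(\rho)$; and second, one may differentiate the system along the material derivative $D_t=\partial_t+u\cdot\nabla$ and use the commutator identities culminating in \eqref{Dt-eqns-B-8} to propagate weighted estimates of $(D_tu,D_tB)$. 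I would organize the proof into the same dyadic‑then‑sum strategy used for Lemma~\ref{S2lem1} and Proposition~\ref{prop-vaviable-1}: prove weighted bounds for the frequency‑localized pieces $(u_j,B_j,\nabla\Pi_j)$ and then reassemble them using the Bernstein/interpolation bookkeeping with the $\ell^2$ sequence $(c_j)$, exactly as in the computation of $\|t^{1/4}\nabla u\|_{L^\infty_T(L^2)}$ there.

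The first block of new estimates (the terms in \eqref{est-const-basic-0} with weight $t^{1/4}$: $\|t^{1/4}(u_t,B_t,\nabla^2u,\nabla\Pi,\curl(\sigma(\rho)\curl B))\|_{L^2_T(L^2)}$, $\|t^{1/4}(\nabla u,\nabla B)\|_{L^2_T(L^6)}$, $\|(\nabla u,\nabla B)\|_{L^2_T(L^3)}$, $\|(u,B)\|_{L^2_T(L^\infty)}$) essentially coincides with what was obtained in Proposition~\ref{prop-vaviable-1}; the only improvement is that $\|t^{1/4}\nabla^2u\|_{L^2_T(L^2)}$ and $\|t^{1/4}\nabla\Pi\|_{L^2_T(L^2)}$ now follow directly, since in the constant‑viscosity case $\nabla^2u$ and $\nabla\Pi$ are recovered from $\rho D_tu-(B\cdot\nabla)B$ by the Stokes estimate $\|\nabla^2u\|_{L^2}+\|\nabla\Pi\|_{L^2}\lesssim\|\rho D_tu-(B\cdot\nabla)B\|_{L^2}\lesssim\|u_t\|_{L^2}+\|u\|_{L^3}\|\nabla u\|_{L^6}+\|B\|_{L^3}\|\nabla B\|_{L^6}$, and the right‑hand side is already controlled by \eqref{est-basic-2-45}. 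The magnetic part $\curl(\sigma(\rho)\curl B)$ was already estimated in \eqref{est-basic-2-41} and \eqref{est-basic-2-46}, so nothing changes there.

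The substantial new work is the second block: the higher‑weight quantities $\|t^{3/4}(u_t,B_t)\|_{L^\infty_T(L^2)}$, $\|t^{3/4}(\nabla u,\nabla B)\|_{L^\infty_T(L^6)}$, $\|t^{3/4}(\nabla u_t,\nabla D_tu,\nabla D_tB)\|_{L^2_T(L^2)}$, $\|t^{1/2}(u,B)\|_{L^\infty_T(L^\infty)}$, $\|t^{1/2}(\nabla u,\nabla B)\|_{L^\infty_T(L^3)}$, $\|t^{1/2}u_t\|_{L^2_T(L^3)}$, $\|t^{3/4}(\nabla^2u,\nabla\Pi)\|_{L^2_T(L^6)}$, and $\|t^{1/2}\nabla u\|_{L^2_T(L^\infty)}$. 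For these I would first close a weighted $D_tu$ energy estimate: take the $L^2$ inner product of the momentum equation (in the form $\rho D_tu=\Delta u-\nabla\Pi+(B\cdot\nabla)B$) differentiated by $D_t$ — or, more robustly, multiply the $u_j$‑momentum equation by $\partial_t(t^{3/2}\partial_tu_j)$ and handle the $\rho$‑transport and pressure commutators as in Step~2 of Lemma~\ref{S2lem1} — to get $\frac{d}{dt}\|t^{3/4}\nabla u_j\|_{L^2}^2+\|t^{3/4}\partial_tu_j\|_{L^2}^2\lesssim(\text{lower‑order, already‑controlled})$, and symmetrically for $B_j$ using \eqref{Dt-eqns-B-8}; the coupling term $(D_tB\cdot\nabla)u+(B\cdot\nabla)D_tu$ is antisymmetric after integration by parts just as $(B\cdot\nabla)B_j$ and $(B\cdot\nabla)u_j$ were in Step~1, so it does not produce uncontrolled growth. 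Summing in $j$ against $(c_j)$ as in \eqref{est-basic-2-34} yields $\|t^{3/4}(u_t,B_t)\|_{L^\infty_T(L^2)}+\|t^{3/4}(\nabla u_t,\nabla D_tu,\nabla D_tB)\|_{L^2_T(L^2)}\lesssim\|(u_0,B_0)\|_{\dot H^{1/2}}$. From this, the Stokes estimate again gives $\|t^{3/4}(\nabla^2u,\nabla\Pi)\|$ in both $L^2_T(L^2)$ and, via the $L^6$ maximal regularity, $L^2_T(L^6)$; the interpolation $\|\nabla u\|_{L^\infty}\lesssim\|\nabla u\|_{L^2}^{1/2}\|\nabla^2u\|_{L^6}^{1/2}$ (together with $\|\nabla u\|_{L^3}\lesssim\|\nabla u\|_{L^2}^{1/2}\|\nabla^2u\|_{L^6}^{1/2}$ and $\|u\|_{L^\infty}\lesssim\|u\|_{L^6}^{1/2}\|\nabla u\|_{L^6}^{1/2}$, plus $\dot H^{1/2}\hookrightarrow L^3$) then produces all the remaining $t^{1/2}$‑weighted norms and $\|t^{1/2}u_t\|_{L^2_T(L^3)}$ by the same $L^2\!-\!L^6$ interpolation of $u_t$ balanced by the $t^{3/4}$ and $t^{1/4}$ bounds.

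The main obstacle I anticipate is the weighted $D_tB$ estimate: the source term $g$ in \eqref{Dt-eqns-B-8} contains $\curl(\sigma(\rho)\nabla u^i\wedge\partial_i B)$ and $\nabla u^i\wedge\partial_i(\sigma(\rho)\curl B)$, which are genuinely second order in $B$ and carry the derivative of the \emph{variable} resistivity; one cannot integrate these by parts freely against $D_tB$ without generating $\nabla\sigma(\rho)=\sigma'(\rho)\nabla\rho$, and $\nabla\rho$ is not controlled in this scaling. The resolution — mirroring the trick already used in \eqref{est-basic-2-16} and in the identity $-\int u\cdot\nabla\sigma(\rho)|\curl B_j|^2=\int u\cdot\nabla(\sigma(\rho)^{-1})|\sigma(\rho)\curl B_j|^2$ — is to rewrite every occurrence of $\nabla\sigma(\rho)$ using the transport equation $\partial_t\sigma(\rho)+u\cdot\nabla\sigma(\rho)=0$, so that $\nabla\sigma(\rho)$ only ever appears paired with a factor of $u$ (hence is controlled by $\|u\|_{L^3}\|\nabla(\cdot)\|_{L^6}$ with the smallness of $\mathfrak c_2$), and otherwise to keep $\sigma(\rho)\curl B$ grouped as a single object whose $L^2$ norm equals, up to the constant $\underline\sigma\le\sigma(\rho)\le$ const, the already‑controlled $\curl(\sigma(\rho)\curl B)$ after one integration by parts. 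Because $\|\sigma(\rho)-1\|_{L^\infty}\le\varepsilon_0$, the "bad" self‑interaction $\curl((\sigma(\rho)-1)\curl D_tB)$ is absorbed into the left‑hand dissipation exactly as in Step~4 of Lemma~\ref{S2lem1}. With these reductions the weighted Gronwall argument closes, and summing the dyadic pieces delivers \eqref{est-const-basic-0}.
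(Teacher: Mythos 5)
Your overall strategy is the right one and, for most of the proposition, coincides with the paper's: the first block of \eqref{est-const-basic-0} is inherited from Proposition \ref{prop-vaviable-1} together with the Stokes estimate $\|(\nabla^2u,\nabla\Pi)\|_{L^2}\lesssim\|\sqrt\rho u_t\|_{L^2}+\|(u,B)\|_{L^3}\|(\nabla u,\nabla B)\|_{L^6}$ (this is exactly \eqref{est-const-basic-2}); the second block comes from a $t^{3/2}$-weighted energy estimate at the level of one time derivative, closed by Gronwall against the quantities of \eqref{est-variable-2}, followed by $L^2$--$L^6$ interpolation for all the $t^{1/2}$- and $t^{3/4}$-weighted norms; and your handling of the resistivity terms in \eqref{Dt-eqns-B-8} --- never differentiating $\sigma(\rho)$, but integrating the outer $\curl$ or $\partial_i$ onto $D_tB$ while keeping $\sigma(\rho)\curl B$ grouped --- is precisely what the paper does with $II_2$ and $II_3$.

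There are, however, two concrete problems with the way you propose to execute the key step. First, the differential inequality you display, $\frac{d}{dt}\|t^{3/4}\nabla u_j\|_{L^2}^2+\|t^{3/4}\partial_tu_j\|_{L^2}^2\lesssim(\cdots)$, is one derivative too low: it controls $\nabla u_j$ in $L^\infty_T(L^2)$ and $\partial_tu_j$ in $L^2_T(L^2)$, and no amount of summing in $j$ will convert that into the claimed $\|t^{3/4}(u_t,B_t)\|_{L^\infty_T(L^2)}+\|t^{3/4}(\nabla u_t,\nabla D_tu,\nabla D_tB)\|_{L^2_T(L^2)}$. What is needed is the energy functional $\|t^{3/4}(\sqrt\rho u_t,D_tB)\|_{L^2}^2$ with dissipation $\|t^{3/4}(\nabla u_t,\nabla D_tu,\nabla D_tB)\|_{L^2}^2$, obtained by applying $\partial_t$ to the momentum equation (as in \eqref{est-const-basic-3}) and testing with $u_t$, and testing \eqref{Dt-eqns-B-8} with $D_tB$, then passing between $B_t$ and $D_tB$ via the equivalence \eqref{est-const-basic-21}; this is \eqref{est-const-basic-24}. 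Second, the dyadic route you lean on is circular for this proposition: the Gronwall coefficients and source terms in any frequency-localized version (e.g.\ $\|u_t\|_{L^3}\|\nabla u_j\|_{L^6}$, $\|t^{1/2}u\|_{L^\infty}$, $\|t^{1/2}\nabla u\|_{L^3}$) are exactly the global norms being asserted in \eqref{est-const-basic-0}. The paper therefore proves this proposition directly on the full solution $(u,B)$, where every coefficient entering Gronwall is already supplied by \eqref{est-variable-2} and \eqref{est-basic-2-45}, and defers the dyadic weighted estimates to Lemma \ref{Prop-const-j-Lip-1}, whose proof explicitly takes \eqref{est-const-basic-0} as input. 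You should restructure accordingly: global weighted estimate first, dyadic pieces only afterwards.
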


\begin{proof} Below we always assume that for $T<T^\ast.$ We first deduce from Proposition \ref{prop-vaviable-1} that
\begin{equation}\label{est-const-basic-1}
\begin{split}
&\|(u,B)\|_{\widetilde L^\infty_T(\dot H^{\frac{1}{2}})}
+\|{t}^{\frac{1}{4}}(\nabla u,\nabla B)\|_{L^\infty_T(L^2)}
+\|(\nabla u,\nabla B)\|_{L^4_T(L^2)}
\\&
+\bigl\|{t}^{\frac{1}{4}}\left(u_t,B_t, \nabla^2u,
\nabla\Pi, \curl(\sigma(\rho)\curl\, B)\right)\bigr\|_{L^2_T(L^2)}+\|{t}^{-\frac{1}{4}}(\nabla u,\nabla B)\|_{L^2_T(L^2)}
\\&
+\|{t}^{\frac{1}{4}}(\nabla u,\nabla B)\|_{L^2_T(L^6)}+\|(\nabla u,\nabla B)\|_{L^2_T(L^3)}+\|(u,B)\|_{L^2_T(L^\infty)}
\lesssim
\|(u_0,B_0)\|_{\dot H^{\frac{1}{2}}}.
\end{split}
\end{equation}

Let's turn to the  remaining  terms in \eqref{est-const-basic-0}.
By applying the operator $\partial_t$  to the momentum equation of \eqref{1.2}
and using the transport equation of \eqref{1.2}, we find
\begin{equation}\label{est-const-basic-3}
\begin{split}
\rho\partial_t u_t
&+\rho(u\cdot\nabla)u_t
-\Delta u_t
+\nabla\Pi_t=f,\with
\\
f&= (B_t\cdot\na)B+(B\cdot\na)B_t-\rho_tD_tu-\rho(u_t\cdot\nabla)u
\\&=(B_t\cdot\na)B+(B\cdot\na)B_t+(u\cdot\na)\rho\,D_tu-\rho(u_t\cdot\nabla)u.
\end{split}
\end{equation}

By taking $L^2$ inner product of \eqref{est-const-basic-3} with $u_t$, we obtain
\begin{equation}\label{est-const-basic-4}
\begin{split}
\frac{1}{2}\frac{d}{dt}\|\sqrt{\rho}u_t(t)\|_{L^2}^2
+\|\nabla u_t\|_{L^2}^2&=I_1+I_2\with\\
&I_1\eqdefa \int_{\mathbb{R}^3} \bigl((B_t\cdot\na)B-\rho(u_t\cdot\nabla)u\bigr)\cdot u_t\,dx,\\
&I_2 \eqdefa \int_{\mathbb{R}^3} \bigl((B\cdot\na)B_t+(u\cdot\na)\rho\,D_tu\bigr) \cdot u_t\,dx.
\end{split}
\end{equation}
Notice that
\begin{equation*}\label{est-const-basic-6}
\begin{split}
&|I_1| \lesssim \|(u_t, B_t)\|_{L^2}\|(\nabla u, \nabla B)\|_{L^3}\|u_t\|_{L^6}\lesssim \|(u_t, B_t)\|_{L^2}\|(\nabla u, \nabla B)\|_{L^3}\|\nabla u_t\|_{L^2},\\
& \bigl|\int_{\mathbb{R}^3} ((B\cdot\na)B_t) \cdot u_t\,dx\bigr|=\bigl|\int_{\mathbb{R}^3} (B\otimes B_t) :\nabla u_t\,dx\bigr|\lesssim \|B\|_{L^{\infty}}\|B_t\|_{L^2} \|\nabla u_t\|_{L^2},
\end{split}
\end{equation*}
and
\begin{align*}
\bigl|\int_{\R^3}(u\cdot\na)\rho\,D_tu\cdot u_t\,dx\bigr|
=&\bigl|\int_{\R^3}\rho[(u\cdot\na)(u_t+u\cdot\nabla u)]\cdot u_t\,dx
+\int_{\R^3}\rho[(u\cdot\na)u_t]\cdot D_tu\,dx\bigr|
\\
\lesssim &
\|u\|_{L^\infty}\|\nabla u_t\|_{L^2}\|u_t\|_{L^2}
+\|u\|_{L^3}\|\nabla u\|_{L^3}\|\nabla u\|_{L^6}\|u_t\|_{L^6}
\\&
+\|u\|_{L^\infty}\| u\|_{L^3}\|\nabla^2 u\|_{L^2}\|u_t\|_{L^6}
+\|u\|_{L^\infty}\|D_tu\|_{L^2}\|\nabla u_t\|_{L^2}
\\
\lesssim &\bigl(
\|u\|_{L^\infty}\bigl(\|u_t\|_{L^2}+\|D_tu\|_{L^2}\bigr)\\
&+\bigl(\|u\|_{L^\infty}+\|\na u\|_{L^3}\bigr)\|u\|_{L^3}\|\nabla^2 u\|_{L^2}
\bigr)\|\nabla u_t\|_{L^2}.
\end{align*}
Yet it follows from
the $u$ equation in \eqref{1.2} and
\eqref{est-basic-2-41} that
$$
\begin{aligned}
\|\nabla^2u\|_{L^2}
&\lesssim
\|\sqrt{\rho}u_t\|_{L^2}+\|(u\cdot\nabla)u\|_{L^2}
+\|(B\cdot\nabla)B\|_{L^2}
\\&
\lesssim
\|\sqrt{\rho}u_t\|_{L^2}+\|u\|_{L^3}\|\nabla u\|_{L^6}
+\|B\|_{L^3}\|\nabla B\|_{L^6}
\lesssim
\|(u_t, B_t)\|_{L^2},
\end{aligned}
$$
so that one has
\beno
|I_2|\lesssim \bigl(\|\nabla u\|_{L^3}+\|(u,B)\|_{L^\infty}\bigr)\|(u_t,B_t,D_tu)\|_{L^2} \|\nabla u_t\|_{L^2}.
\eeno

As a consequence,
we get, by substituting the above inequalities into \eqref{est-const-basic-4}, that
\begin{equation*}\label{est-const-basic-7}
\begin{split}
 \frac{d}{dt}\|\sqrt{\rho}u_t(t)\|_{L^2}^2
+2\|\nabla u_t\|_{L^2}^2 \lesssim \bigl(\|(u, B)\|_{L^{\infty}}+\|(\nabla u, \nabla B)\|_{L^3}\bigr)\|(u_t, D_tu, B_t)\|_{L^2} \|\nabla u_t\|_{L^2}.
\end{split}
\end{equation*}
Applying Young's inequality gives
\begin{equation}\label{est-const-basic-8}
\begin{split}
 \frac{d}{dt}\|\sqrt{\rho}u_t(t)\|_{L^2}^2
+&\|\nabla u_t\|_{L^2}^2 \\
\lesssim &\bigl(\|(u, B)\|_{L^{\infty}}^2+\|(\nabla u, \nabla B)\|_{L^3}^2\bigr)\bigl(\|(u_t, B_t)\|_{L^2}^2+
\|u\cdot \nabla u\|_{L^2}^2\bigr).
\end{split}
\end{equation}

On the other hand, taking $L^2$ inner product of  \eqref{Dt-eqns-B-8} with $D_tB,$ we find
\begin{equation*}\label{est-const-basic-9}
\begin{split}
\frac{1}{2}&\frac{d}{dt}\|D_t{B}\|_{L^2}^2
+\|\sqrt{\sigma(\rho)}\curl{D_t{B}}\|_{L^2}^2=II_1+II_2+II_3 \with\\
&II_1\eqdefa \int_{\mathbb{R}^3}\Bigl((D_t{B}\cdot\nabla) u
+(B\cdot\nabla) D_t{u}
-\bigl[\left((B\cdot\nabla)u\right)\cdot\nabla\bigr]u\Bigr)\cdot D_tB\,dx,\\
&II_2 \eqdefa \int_{\mathbb{R}^3} \curl (\sigma(\rho)\nabla\,u^{i} \wedge \partial_{i} \,{B})\cdot D_tB\,dx,\\
&II_3 \eqdefa \int_{\mathbb{R}^3}(\nabla\,u^{i} \wedge \partial_{i} (\sigma(\rho)\curl\,{B}))\cdot D_tB\,dx.
\end{split}
\end{equation*}
Notice that
\begin{equation*}\label{est-const-basic-11}
\begin{split}
|II_1| &\lesssim \bigl(\|D_t{B}\|_{L^2}\|\nabla  u\|_{L^3}+
\|B\|_{L^3}\|\nabla u\|_{L^4}^2\bigr)\|D_tB\|_{L^6}
+\|B\|_{L^{\infty}}\|\nabla D_t{u}\|_{L^2}\|D_tB\|_{L^2},\\
|II_2|&= \bigl|\int_{\mathbb{R}^3} \bigl[\sigma(\rho)\nabla\,u^{i} \wedge \partial_{i} \,{B}] : \curl D_tB\,dx\bigr|\lesssim \|(\nabla\,B, \nabla u)\|_{L^4}^2 \|\nabla D_tB\|_{L^2},\\
|II_3|&= |\int_{\mathbb{R}^3}(\nabla\,u^{i} \wedge  (\sigma(\rho)\curl\,{B}))\cdot  \partial_{i}D_tB\,dx|\\
& \leq  \|\nabla\,u\|_{L^3} \|\sigma(\rho)\curl\,{B}\|_{L^6}\|\nabla{D_tB}\|_{L^2}\lesssim \|\nabla\,u\|_{L^3}\|(u_t, B_t)\|_{L^2}\|\nabla{D_tB}\|_{L^2},
\end{split}
\end{equation*}
where we used \eqref{est-basic-2-41} in the last inequality, we thus obtain
\begin{equation}\label{est-const-basic-12}
\begin{split}
&\frac{1}{2}\frac{d}{dt}\|D_t{B}(t)\|_{L^2}^2
+\|\sqrt{\sigma(\rho)}\curl{D_t{B}}\|_{L^2}^2\lesssim \|B\|_{L^{\infty}}\|\nabla D_t{u}\|_{L^2}\|D_tB\|_{L^2}\\
&\qquad +\bigl(\|(D_t{B}, u_t, B_t)\|_{L^2}\|\nabla  u\|_{L^3}+
\|B\|_{L^3}\|\nabla u\|_{L^4}^2+\|(\nabla\,B, \nabla u)\|_{L^4}^2\bigr)\|\nabla D_tB\|_{L^2}.
\end{split}
\end{equation}

By summing up  \eqref{est-const-basic-8} and \eqref{est-const-basic-12}, we arrive at
\begin{equation}\label{est-const-basic-13}
\begin{split}
\frac{d}{dt}&\|(\sqrt{\rho}u_t, D_t{B})\|_{L^2}^2+c_3\|(\nabla u_t, \curl{D_t{B}})\|_{L^2}^2
\lesssim \|B\|_{L^{\infty}}\|\nabla D_t{u}\|_{L^2}\|D_tB\|_{L^2}\\
&+\bigl(\|(u, B)\|_{L^{\infty}}^2+\|(\nabla u, \nabla B)\|_{L^3}^2\bigr)\bigl(\|(u_t, B_t)\|_{L^2}^2+
 \|u\|_{L^3}^2\|\nabla^2 u\|_{L^2}^2\bigr)\\
& +\bigl(\|(D_t{B}, u_t, B_t)\|_{L^2}\|\nabla  u\|_{L^3}+
\|B\|_{L^3}\|\nabla u\|_{L^4}^2+\|(\nabla\,B, \nabla u)\|_{L^4}^2\bigr)\|\nabla D_tB\|_{L^2}.
\end{split}
\end{equation}
Due to $\dv\, u=\dv\, B=0$, we have
\begin{equation*}\label{est-const-basic-14}
\begin{split}
\|\nabla{D_t{B}}\|_{L^2}& \lesssim \|\curl{D_t{B}}\|_{L^2}+\|\dv {D_t{B}}\|_{L^2}\\
&\lesssim \|\curl{D_t{B}}\|_{L^2}+\|\nabla u\otimes \nabla B\|_{L^2} \lesssim \|\curl{D_t{B}}\|_{L^2}+\|(\nabla u, \nabla B)\|_{L^4}^2, \\
 \|\nabla{D_t{u}}\|_{L^2} &\lesssim \|\nabla u_t\|_{L^2}+\|u\|_{L^{\infty}}\|\nabla^2u\|_{L^2}+\|\nabla u\|_{L^4}^2,
\end{split}
\end{equation*}
which implies
\begin{equation*}\label{est-const-basic-15}
\begin{split}
&\|(\nabla{D_t{u}}, \nabla{D_t{B}})\|_{L^2} \lesssim \|(\nabla u_t, \curl{D_t{B}})\|_{L^2}+\|(\nabla u, \nabla B)\|_{L^4}^2+\|u\|_{L^{\infty}}\|\nabla^2u\|_{L^2},
\end{split}
\end{equation*}
from which and \eqref{est-const-basic-13}, we infer
\begin{equation*}\label{est-const-basic-16}
\begin{split}
& \frac{d}{dt}\|(\sqrt{\rho}u_t, D_t{B})(t)\|_{L^2}^2+2c_4\|(\nabla u_t, \nabla{D_t{u}}, \nabla{D_t{B}})\|_{L^2}^2 \\
 &\lesssim \|(\nabla u, \nabla B)\|_{L^4}^4+\|u\|_{L^{\infty}}^2\|\nabla^2u\|_{L^2}^2+\|B\|_{L^{\infty}}\|D_tB\|_{L^2}\|\nabla D_t{u}\|_{L^2}\\
&\quad+\bigl(\|(u, B)\|_{L^{\infty}}^2+\|(\nabla u, \nabla B)\|_{L^3}^2\bigr)\bigl(\|(u_t, B_t)\|_{L^2}^2+
 \|u\|_{L^3}^2\|\nabla^2 u\|_{L^2}^2\bigr)\\
&\quad +\bigl(\|(D_t{B}, u_t, B_t)\|_{L^2}\|\nabla  u\|_{L^3}+
\|B\|_{L^3}\|\nabla u\|_{L^4}^2+\|(\nabla\,B, \nabla u)\|_{L^4}^2\bigr)\|\nabla D_tB\|_{L^2}.
\end{split}
\end{equation*}
Applying Young's inequality yields
\begin{equation}\label{est-const-basic-17}
\begin{split}
& \frac{d}{dt}\|(\sqrt{\rho}u_t, D_t{B})(t)\|_{L^2}^2+c_4\|(\nabla u_t, \nabla{D_t{u}}, \nabla{D_t{B}})\|_{L^2}^2 \\
 &\lesssim (1+\|B\|_{L^3}^2)\|(\nabla u, \nabla B)\|_{L^4}^4 +\bigl(\|B\|_{L^{\infty}}^2+\|\nabla  u\|_{L^3}^2 \bigr)\|D_tB\|_{L^2}^2\\
 &\quad+\bigl(1+\|u\|_{L^3}^2\bigr)\bigl(\|(u, B)\|_{L^{\infty}}^2+\|(\nabla u, \nabla B)\|_{L^3}^2\bigr)\|(u_t, B_t, \nabla^2u)\|_{L^2}^2.
\end{split}
\end{equation}
Yet it follows from a similar derivation of \eqref{est-basic-2-23} that
\beno
\|(\nabla u, \nabla B)\|_{L^6}
\lesssim
\|(u_t, B_t)\|_{L^2},\, \|(\nabla u, \nabla B)\|_{L^4}
\lesssim
\|(\nabla u, \nabla B)\|_{L^2}^{\frac{1}{4}}
\|(u_t, B_t)\|_{L^2}^{\frac{3}{4}},\eeno
from which, \eqref{small-data-1} and Proposition \ref{prop-vaviable-1}, we infer
\begin{equation}\label{est-const-basic-2}
\begin{split}
\|(\nabla^2u(t), \nabla\Pi(t)\|_{L^2}
&\lesssim
\|\sqrt{\rho}u_t\|_{L^2}+\|(u\cdot\nabla)u\|_{L^2}
+\|(B\cdot\nabla)B\|_{L^2}
\\&
\lesssim
\|\sqrt{\rho}u_t\|_{L^2}+\|(u, B)\|_{L^3}\|(\nabla u, \nabla B)\|_{L^6}
\lesssim
\|(\sqrt{\rho}u_t,B_t)\|_{L^2},
\end{split}
\end{equation}
and
\begin{equation*}\label{est-const-basic-20}
\begin{split}
\|(u_t, B_t)\|_{L^2}
&\lesssim
\|(u_t,D_tB)\|_{L^2}+\|(u\cdot\nabla)B\|_{L^2}
\lesssim
\|(u_t,D_tB)\|_{L^2}+\|u\|_{L^3}\|\nabla B\|_{L^6}\\
&
\lesssim \|(u_t,D_tB)\|_{L^2}+\|u\|_{L^3}\|(u_t, B_t)\|_{L^2},\\
\|(u_t,D_tB)\|_{L^2}
&\lesssim
\|(u_t, B_t)\|_{L^2}+\|(u\cdot\nabla)B\|_{L^2}
\lesssim
\|(u_t, B_t)\|_{L^2}+\|u\|_{L^3}\|\nabla B\|_{L^6}.
\end{split}
\end{equation*}
As a result, it comes out
\begin{equation}\label{est-const-basic-21}
\begin{split}
\|(u_t, B_t)\|_{L^2} \lesssim \|(u_t,D_tB)\|_{L^2}, \quad \|(u_t,D_tB)\|_{L^2} \lesssim \|(u_t, B_t)\|_{L^2}.
\end{split}
\end{equation}

By inserting \eqref{est-const-basic-21} into
 \eqref{est-const-basic-17} and using \eqref{est-variable-2}, we obtain
\begin{equation}\label{est-const-basic-22}
\begin{split}
& \frac{d}{dt}\|(\sqrt{\rho}u_t, D_t{B})(t)\|_{L^2}^2+c_4\|(\nabla u_t, \nabla{D_t{u}}, \nabla{D_t{B}})\|_{L^2}^2 \\
 &\lesssim \bigl(\|(\nabla u, \nabla B)\|_{L^2}
\|(u_t, B_t)\|_{L^2}+\|(u, B)\|_{L^{\infty}}^2+\|(\nabla u, \nabla B)\|_{L^3}^2\bigr) \|(\sqrt{\rho}u_t,D_tB)\|_{L^2}^2.
\end{split}
\end{equation}
By multiplying  \eqref{est-const-basic-22} by $t^{\frac{3}{2}}$, we achieve
\begin{equation*}\label{est-const-basic-23}
\begin{split}
& \frac{d}{dt}\bigl\|t^{\frac{3}{4}}\,(\sqrt{\rho}u_t, D_t{B})(t)\bigr\|_{L^2}^2+c_4\|t^{\frac{3}{4}}\,(\nabla u_t, \nabla{D_t{u}}, \nabla{D_t{B}})\|_{L^2}^2 \\
 &\leq C_2 \|t^{\frac{1}{4}}(u_t, B_t)\|_{L^2}^2+C_2\bigl(\|(\nabla u, \nabla B)\|_{L^2}
\|(u_t, B_t)\|_{L^2}\\
&\qquad\qquad\qquad\qquad\qquad\qquad+\|(u, B)\|_{L^{\infty}}^2+\|(\nabla u, \nabla B)\|_{L^3}^2\bigr) \|t^{\frac{3}{4}}\,(\sqrt{\rho}u_t,D_tB)\|_{L^2}^2.
\end{split}
\end{equation*}
Applying Gronwall's inequality and using \eqref{est-const-basic-1} and \eqref{est-const-basic-21} gives rise to
\begin{equation}\label{est-const-basic-24}
\begin{split}
& \bigl\|t^{\frac{3}{4}}\,(\sqrt{\rho}u_t, B_t, D_t{B})\bigr\|_{L^{\infty}_T(L^2)}^2+c_4\bigl\|t^{\frac{3}{4}}\,(\nabla u_t, \nabla{D_t{u}}, \nabla{D_t{B}})\bigr\|_{L^2_T(L^2)}^2 \\
 &\leq C_2 \|t^{\frac{1}{4}}(u_t, B_t)\|_{L^2_T(L^2)}^2\,\exp\Bigl(C_2\bigl(\|t^{-\frac{1}{4}}(\nabla u, \nabla B)\|_{L^2_T(L^2)}\|t^{\frac{1}{4}}(u_t, \partial_t{B})\|_{L^2_T(L^2)}\\
&\qquad\qquad\qquad\qquad\qquad\qquad\qquad+\|(u, B)\|_{L^2_T(L^{\infty})}^2+\|(\nabla u, \nabla B)\|_{L^2_T(L^3)}^2\bigr)\Bigr)\\
 &\lesssim \|(u_0, B_0)\|_{\dot{H}^{\frac{1}{2}}}^2 \exp\Bigl(C \|(u_0, B_0)\|_{\dot{H}^{\frac{1}{2}}}^2\Bigr),
\end{split}
\end{equation}
and
\begin{equation*}\label{est-const-basic-24aab}
\begin{split}
\|{t}^{\frac{3}{4}}(\nabla u,\nabla B)\|_{L^{\infty}_T(L^6)}\lesssim \|t^{\frac{3}{4}}(u_t,B_t)\|_{L^\infty_T(L^2)}\lesssim \|(u_0, B_0)\|_{\dot{H}^{\frac{1}{2}}}.
\end{split}
\end{equation*}

While it follows from  \eqref{est-const-basic-21} that
\begin{equation}\label{est-const-basic-25}
\begin{split}
\|(u, B)\|_{L^{\infty}}^2+\|(\nabla u, \nabla B)\|_{L^3}^2  &\lesssim \|(\nabla u, \nabla B)\|_{L^2} \|(\nabla u, \nabla B)\|_{L^6}\\
& \lesssim \|(\nabla u, \nabla B)\|_{L^2}
\|(u_t, B_t)\|_{L^2},
\end{split}
\end{equation}
which together with \eqref{est-variable-2} and \eqref{est-const-basic-24} ensures that
\begin{equation*}\label{est-const-basic-26}
\begin{split}
&\|t^{\frac{1}{2}}(u,B)\|_{L^\infty_T(L^\infty)}+\|t^{\frac{1}{2}}(\nabla u, \nabla B)\|_{L^\infty_T(L^3)}\\
&\lesssim
\|t^{\frac{1}{4}}(\nabla u,\nabla B)\|_{L^\infty_t(L^2)}^{\frac{1}{2}}
\|t^{\frac{3}{4}}(u_t, B_t)\|_{L^\infty_t(L^2)}^{\frac{1}{2}}
\lesssim \|(u_0, B_0)\|_{\dot{H}^{\frac{1}{2}}} \exp\Bigl(C \|(u_0, B_0)\|_{\dot{H}^{\frac{1}{2}}}^2\Bigr),
\end{split}
\end{equation*}
and
\begin{equation*}\label{est-const-basic-27}
\begin{split}
&\|t^{\frac{1}{2}}\,u_t\|_{L^2_T(L^3)}^2\lesssim \|t^{\frac{1}{4}}\,u_t\|_{L^2_T(L^2)}\|t^{\frac{3}{4}}\,\nabla u_t\|_{L^2_T(L^2)} \lesssim \|(u_0, B_0)\|_{\dot{H}^{\frac{1}{2}}}^2.
\end{split}
\end{equation*}

Whereas we observe from the momentum equations of \eqref{1.2}, \eqref{est-const-basic-2} and \eqref{est-const-basic-25} that
\begin{equation*}\label{est-const-basic-28}
\begin{split}
\|(\nabla^2 u, \nabla\Pi)\|_{L^6}
&\lesssim
\|u_t\|_{L^6}+\|(u,B)\|_{L^\infty}\|(\nabla u,\nabla B)\|_{L^6}\\
&\lesssim
\|\nabla u_t\|_{L^2}
+\|(\nabla u, \nabla B)\|_{L^2}^{\frac{1}{2}}\|(u_t, {B}_t)\|_{L^2}^{\frac{3}{2}},
\end{split}
\end{equation*}
so that we have
\begin{equation*}\label{est-const-basic-29}
\begin{split}
&\|t^{\frac{3}{4}}\,(\nabla^2 u, \nabla\Pi)\|_{L^2_T(L^6)}^2\lesssim  \|t^{\frac{3}{4}}\,\nabla u_t\|_{L^2_T(L^2)}^2\\
&+\|t^{\frac{1}{4}}\,(\nabla u, \nabla B)\|_{L^{\infty}_T(L^2)}\|t^{\frac{1}{4}}\,({u}_t,{B}_t)\|_{L^2_T(L^2)}^2\|t^{\frac{3}{4}}\,({u}_t,{B}_t)\|_{L^{\infty}_T(L^2)}\lesssim \|(u_0, B_0)\|_{\dot{H}^{\frac{1}{2}}}^2,
\end{split}
\end{equation*}
and
\begin{equation}\label{est-const-basic-29}
\begin{split}
 \int_0^T \|t^{\frac{1}{2}}\,\nabla u(t)\|_{L^\infty}^2\,dt
&\lesssim \int_0^T\|t^{\frac{1}{4}}\,\nabla^2 u(t)\|_{L^2}\|t^{\frac{3}{4}}\,\nabla^2 u(t)\|_{L^6}\,dt\\
&
\lesssim\|t^{\frac{1}{4}}\,\nabla^2 u\|_{L^2_T(L^2)}\|t^{\frac{3}{4}}\,\nabla^2 u\|_{L^2_T(L^6)}
\lesssim
\|(u_0,B_0)\|_{\dot H^{\frac{1}{2}}}^2.
\end{split}
\end{equation}

By summarizing the estimates \eqref{est-const-basic-1} and (\ref{est-const-basic-24}-\ref{est-const-basic-29}),
We finish the proof of Proposition \ref{prop-const-priori-1}.
\end{proof}

\begin{lem}\label{Prop-const-j-Lip-1}{\sl  Let $(\rho, u, B, \nabla\Pi)$ be a smooth enough solution of \eqref{1.2} on $[0, T^{\ast}[$, and for $j\in\Z,$ let  $(u_j, \nabla\Pi_j, B_j)$ be determined by the system \eqref{model-3d-freq-1} with $\mu(\rho)=1.$
Then   under the assumptions of Proposition \ref{prop-const-priori-1}, for any  $T\in [0, T^{\ast}[$ and $ j \in \mathbb{Z}$, there holds
\begin{equation}\label{est-const-basic-j-1}
\begin{split}
&\|(\nabla u_j, \nabla B_j)\|_{L^\infty_T(L^2)} +\|\mathcal{Q}_j\|_{L^2_T(L^2)} +\|\nabla B_j\|_{L^2_T(L^6)}+\|{t}^{\frac{1}{2}}\,\mathcal{Q}_j\|_{L^\infty_T(L^2)}\\
&+\|{t}^{\frac{1}{2}}\,\nabla B_j\|_{L^\infty_T(L^6)}+\|{t}^{\frac{1}{2}}\,(\nabla D_t{u}_j,\nabla \partial_t{u}_j, \nabla{D_t{B}}_j)\|_{L^2_T(L^2)}\lesssim 2^j\|(\dot\Delta_ju_0,\dot\Delta_jB_0)\|_{L^2},
\end{split}
\end{equation}
and
\begin{equation}\label{est-const-basic-j-1a}
\begin{split}
\|{t}^{\frac{1}{2}}&\,(\nabla u_j, \nabla B_j)\|_{L^\infty_T(L^2)}+\|{t}^{\frac{1}{2}}\,\mathcal{Q}_j\|_{L^2_T(L^2)}+\|{t}^{\frac{1}{2}}\,\nabla B_j\|_{L^2_T(L^6)}\\
&+\|t\,\nabla B_j\|_{L^\infty_T(L^6)}
+\|t\,(\nabla D_t{u}_j,\nabla \partial_t{u}_j, \nabla{D_t{B}}_j)\|_{L^2_T(L^2)}\\
&+\|t\,\mathcal{Q}_j\|_{L^\infty_T(L^2)}+\|t\,\mathcal{Q}_j\|_{L^2_T(L^6)}
\lesssim
\|(\dot\Delta_ju_0,\dot\Delta_jB_0)\|_{L^2},
\end{split}
\end{equation} where
\begin{equation*}\label{notation-Q-1}
\mathcal{Q}_j\eqdefa \left(\p_tu_j,\partial_tB_j, D_t{u}_j, D_t{B}_j, \nabla^2u_j,\nabla\Pi_j,\curl(\sigma(\rho)\curl B_j)\right).
\end{equation*}
}
\end{lem}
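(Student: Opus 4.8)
The plan is to repeat, at the dyadic-block level, the scheme of the proof of Proposition~\ref{prop-const-priori-1}, keeping track of the factor $2^j$ supplied by Lemma~\ref{S2lem1}. First I would record the two evolution equations playing the role of \eqref{est-const-basic-3} and \eqref{Dt-eqns-B-8} for the localized unknowns. Applying $\partial_t$ to the momentum equation of \eqref{model-3d-freq-1} with $\mu\equiv1$ and using the transport equation of \eqref{1.2} yields
\begin{equation*}
\rho\,\partial_t^2u_j+\rho(u\cdot\nabla)\partial_tu_j-\Delta\partial_tu_j+\nabla\partial_t\Pi_j=f_j,
\end{equation*}
with $f_j=(\partial_tB\cdot\nabla)B_j+(B\cdot\nabla)\partial_tB_j+(u\cdot\nabla)\rho\,D_tu_j-\rho(\partial_tu\cdot\nabla)u_j$; and applying $D_t=\partial_t+u\cdot\nabla$ to the magnetic equation of \eqref{model-3d-freq-1}, exactly as in the derivation of \eqref{Dt-eqns-B-8}, yields
\begin{equation*}
\partial_tD_tB_j+u\cdot\nabla D_tB_j+\curl\bigl(\sigma(\rho)\curl D_tB_j\bigr)=g_j,
\end{equation*}
with $g_j=(D_tB\cdot\nabla)u_j+(B\cdot\nabla)D_tu_j-\bigl[((B\cdot\nabla)u)\cdot\nabla\bigr]u_j+\curl\bigl(\sigma(\rho)\nabla u^i\wedge\partial_iB_j\bigr)+\nabla u^i\wedge\partial_i\bigl(\sigma(\rho)\curl B_j\bigr)$.

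Next I would set up the elliptic and interpolation bookkeeping that reduces all quantities in $\mathcal{Q}_j$ to the $L^2$-norms of $(\partial_tu_j,\partial_tB_j,D_tu_j,D_tB_j)$ and their gradients: from the momentum equation of \eqref{model-3d-freq-1} with $\mu\equiv1$ one gets, for $p\in\{2,6\}$, $\|(\nabla^2u_j,\nabla\Pi_j)\|_{L^p}\lesssim\|D_tu_j\|_{L^p}+\|B\|_{L^\infty}\|\nabla B_j\|_{L^p}$; from the magnetic equation, $\|\curl(\sigma(\rho)\curl B_j)\|_{L^2}\lesssim\|D_tB_j\|_{L^2}+\|B\|_{L^\infty}\|\nabla u_j\|_{L^2}$; by a derivation like \eqref{est-basic-2-23}, $\|(\nabla u_j,\nabla B_j)\|_{L^6}\lesssim\|(\partial_tu_j,\partial_tB_j)\|_{L^2}$ and $\|(\nabla u_j,\nabla B_j)\|_{L^4}\lesssim\|(\nabla u_j,\nabla B_j)\|_{L^2}^{1/4}\|(\partial_tu_j,\partial_tB_j)\|_{L^2}^{3/4}$; and finally $D_tu_j=\partial_tu_j+(u\cdot\nabla)u_j$, $D_tB_j=\partial_tB_j+(u\cdot\nabla)B_j$, with the transport pieces bounded by $\|u\|_{L^\infty}\|(\nabla u_j,\nabla B_j)\|_{L^2}$. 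Combining these with the bounds $\|(\nabla u_j,\nabla B_j)\|_{L^\infty_T(L^2)}+\|(\partial_tu_j,\partial_tB_j)\|_{L^2_T(L^2)}\lesssim2^j\|(\dot\Delta_ju_0,\dot\Delta_jB_0)\|_{L^2}$ of Lemma~\ref{S2lem1} and with Propositions~\ref{prop-vaviable-1}--\ref{prop-const-priori-1} for the full-field norms of $(u,B)$ (all of which have small, hence absorbable, time integrals), I obtain the unweighted part $\|(\nabla u_j,\nabla B_j)\|_{L^\infty_T(L^2)}+\|\mathcal{Q}_j\|_{L^2_T(L^2)}+\|\nabla B_j\|_{L^2_T(L^6)}\lesssim2^j\|(\dot\Delta_ju_0,\dot\Delta_jB_0)\|_{L^2}$ of \eqref{est-const-basic-j-1} directly, without yet invoking the two new equations.

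The heart of the argument is the weighted energy estimate. Taking the $L^2$ inner product of the $\partial_tu_j$-equation with $\partial_tu_j$ and of the $D_tB_j$-equation with $D_tB_j$, summing, and treating the source terms exactly as in the passage from \eqref{est-const-basic-4} and \eqref{est-const-basic-12} to \eqref{est-const-basic-22} --- but now using Lemma~\ref{S2lem1} for the factors linear in $(u_j,B_j)$ and Proposition~\ref{prop-const-priori-1} for the time integrals of the full-field coefficients --- I expect a differential inequality of the form
\begin{equation*}
\frac{d}{dt}\|(\sqrt{\rho}\,\partial_tu_j,D_tB_j)\|_{L^2}^2+c\,\|(\nabla\partial_tu_j,\nabla D_tu_j,\nabla D_tB_j)\|_{L^2}^2\lesssim\Lambda(t)\,\|(\sqrt{\rho}\,\partial_tu_j,D_tB_j)\|_{L^2}^2+R_j(t),
\end{equation*}
with $\int_0^T\Lambda\,dt\lesssim\|(u_0,B_0)\|_{\dot H^{\frac12}}^2$ small and the remainder $R_j$ controlled, via the Step~2 bookkeeping, by $\|(\nabla u_j,\nabla B_j)\|_{L^\infty_T(L^2)}$, $\|(\partial_tu_j,\partial_tB_j)\|_{L^2_T(L^2)}$ and full-field norms already estimated. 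Then I would run a three-stage bootstrap: Gronwall on the inequality itself; then multiply by $t$ and integrate, controlling the new lower-order term by $\|(\partial_tu_j,\partial_tB_j)\|_{L^2_T(L^2)}\lesssim2^j\|(\dot\Delta_ju_0,\dot\Delta_jB_0)\|_{L^2}$ and thus obtaining $\|t^{1/2}(\sqrt{\rho}\,\partial_tu_j,D_tB_j)\|_{L^\infty_T(L^2)}+\|t^{1/2}(\nabla\partial_tu_j,\nabla D_tu_j,\nabla D_tB_j)\|_{L^2_T(L^2)}\lesssim2^j\|(\dot\Delta_ju_0,\dot\Delta_jB_0)\|_{L^2}$; and finally multiply by $t^2$ and integrate once more, controlling the lower-order term by the $t^{1/2}$-weighted bound just obtained, which yields the $t$-weighted estimates of \eqref{est-const-basic-j-1a}. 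At each stage the remaining quantities in $\mathcal{Q}_j$ ($\nabla^2u_j$, $\nabla\Pi_j$, $\curl(\sigma(\rho)\curl B_j)$) and the $L^6$-norms of $\nabla B_j$ and $\nabla^2u_j$ follow from the Step~2 elliptic and interpolation inequalities together with the bounds just produced.

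I expect the main obstacle to be keeping the powers of $2^j$ and of $t$ consistent while absorbing the remainder $R_j$ and the resistivity-commutator contributions $\curl(\sigma(\rho)\nabla u^i\wedge\partial_iB_j)$ and $\nabla u^i\wedge\partial_i(\sigma(\rho)\curl B_j)$ in $g_j$: each such product must be split into a full-field factor with an integrable-in-time norm (supplied by Proposition~\ref{prop-const-priori-1}) and a localized factor carrying exactly one power of $2^j$ (supplied by Lemma~\ref{S2lem1}), and the borderline terms --- the analogues of $\|(\nabla u,\nabla B)\|_{L^4}^4$ and $\|u\|_{L^\infty}^2\|\nabla^2u\|_{L^2}^2$ appearing in \eqref{est-const-basic-17}, now written with $u_j,B_j$ --- have to be reshaped by \eqref{est-basic-2-23}-type interpolation so that the right-hand side of the weighted inequality involves only norms for which the appropriate weighted estimates are already available from Lemma~\ref{S2lem1} or the previous bootstrap stage. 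Once this bookkeeping is arranged, each of the three stages is a direct Gronwall argument; no summation over $j$ is needed here.
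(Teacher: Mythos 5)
Your proposal is correct and follows the same overall strategy as the paper: reduce every component of $\mathcal{Q}_j$ to $\|(\partial_tu_j,\partial_tB_j)\|_{L^2}$ and $\|\nabla D_tu_j\|_{L^2}$ via the Stokes/elliptic estimates and the \eqref{est-basic-2-23}-type interpolations, derive a second-order energy inequality for the localized unknowns, and run the three-stage (unweighted, $t$, $t^2$) Gronwall bootstrap in which each product is split into a time-integrable full-field factor from \eqref{est-const-basic-0} and a localized factor carrying the single power of $2^j$ from Lemma~\ref{S2lem1}. The one genuine structural difference is your choice of momentum unknown: you apply $\partial_t$ to the $u_j$-equation (mirroring the full-field argument of Proposition~\ref{prop-const-priori-1}), whereas the paper applies $D_t$ to both the momentum and magnetic equations, obtaining the coupled system \eqref{eqns-Dtub-j-1} for $(D_tu_j,D_tB_j)$. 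Your route makes the pressure term vanish outright, since $\dv\,\partial_tu_j=0$ gives $\int\nabla\partial_t\Pi_j\cdot\partial_tu_j\,dx=0$, so you avoid the paper's modified energies $E_j^{(1)},E_j^{(2)}$, which carry the correction $2\int\nabla\Pi_j\,|(u\cdot\nabla)u_j\,dx$ precisely because $\dv\,D_tu_j\neq0$; in exchange you must integrate by parts on the term $(u\cdot\nabla)\rho\,D_tu_j$ coming from $\rho_t$ (the density being only bounded), exactly as is done for the full fields in \eqref{est-const-basic-4}--\eqref{est-const-basic-8}. Since the paper already establishes the equivalences $\|D_tu_j\|_{L^2}\approx\|\partial_tu_j\|_{L^2}$ and $\|\nabla D_tu_j\|_{L^2}\lesssim\|\nabla\partial_tu_j\|_{L^2}+(\|\nabla u\|_{L^3}+\|u\|_{L^\infty})\|(\partial_tu_j,\partial_tB_j)\|_{L^2}$ in \eqref{est-const-basic-j-17} and \eqref{est-const-basic-j-20}, the two formulations yield the same conclusions, and your variant arguably trades the pressure bookkeeping for a density integration by parts of the kind already carried out elsewhere in the paper.
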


\begin{proof} Due to $\mu(\rho)=1,$
 we deduce from the classical estimate on Stokes system and \eqref{model-3d-freq-1} that for $p \in [2, 6]$,
\begin{equation*}
\begin{split}
\|\mathcal{Q}_j\|_{L^p}
&\lesssim
\|(\partial_tu_j,D_tB_j)\|_{L^p}+\|(u\cdot\nabla u_j, B\cdot\nabla B_j, u\cdot\nabla B_j, B\cdot\nabla u_j)\|_{L^p},
\end{split}
\end{equation*}
which in particular implies
\begin{align*}
\|\mathcal{Q}_j\|_{L^2}
&\lesssim
\|(\partial_tu_j, \partial_tB_j)\|_{L^2}+\|(u,B)\|_{L^3}\|(\nabla u_j,\nabla B_j)\|_{L^6},\\
\|\mathcal{Q}_j\|_{L^6}
&\lesssim
\|(\partial_tu_j,D_tB_j)\|_{L^6}
+\|(u,B)\|_{L^{\infty}}\|(\nabla u_j,\nabla B_j)\|_{L^6}
\end{align*}
Hence it follows from \eqref{est-const-basic-0} and \eqref{est-basic-2-23} that
\begin{equation}\label{est-const-basic-j-2}
\begin{split}
\|\mathcal{Q}_j\|_{L^2}+\|\nabla{B}_j\|_{L^6} \lesssim
\|(\partial_tu_j,\partial_tB_j)\|_{L^2}
\end{split}
\end{equation}
and
\begin{equation}\label{est-const-basic-jL6-2}
\begin{split}
\|\mathcal{Q}_j\|_{L^6}
&\lesssim
\|(\partial_tu_j, D_tB_j)\|_{L^6}
+\|(u,B)\|_{L^{\infty}}\|(\partial_tu_j,\partial_tB_j)\|_{L^2},
\end{split}
\end{equation}

On the other hand, similar to the derivation of the equations \eqref{Dt-eqns-B-8} and \eqref{est-const-basic-3}, we get,
by applying  $D_t=\partial_t+(u\cdot\nabla)$ to the momentum equation of\eqref{model-3d-freq-1} (resp. magnetic equation), that
\begin{equation}\label{eqns-Dtub-j-1}
\begin{cases}
&\rho D_t(D_t{u}_j) -\Delta D_t{u}_j+D_t \nabla\Pi_j=f_j,\\
&D_t(D_t{B}_j)
+\curl\bigl(\sigma(\rho)\curl\, \dot{B}_j\bigr)
=g_j
\end{cases}
\end{equation}
with
\begin{equation*}\label{est-const-basic-j-3}
\begin{split}
f_j=&(B\cdot\nabla)D_t{B}_j+(D_t{B}\cdot\nabla)B_j-(B\cdot\nabla u \cdot\nabla) B_j
 -\partial_k(\partial_ku\cdot\nabla u_j)-(\partial_ku\cdot\nabla)\partial_ku_j,\\
g_j=&(B\cdot\nabla)D_t{u}_j+(D_t{B}\cdot\nabla) u_j
- [(B\cdot\nabla)u]\cdot\nabla u_j\\
&
+\curl (\sigma(\rho)\nabla\,u^{i} \wedge \partial_{i} \,{B}_j)+\nabla\,u^{i} \wedge \partial_{i} (\sigma(\rho)\curl\,{B}_j).
\end{split}
\end{equation*}
By taking $L^2$ inner product of \eqref{eqns-Dtub-j-1} with $(D_t{u}_j,D_t{B}_j),$ we find
\begin{equation}\label{est-const-basic-j-5}
\begin{split}
&\frac{1}{2}\frac{d}{dt}\|\sqrt{\rho}\,D_t{u}_j(t)\|_{L^2}^2
+\|\nabla D_t{u}_j\|_{L^2}^2=\sum_{i=1}^6K_i,
\end{split}
\end{equation}
and
\begin{equation}\label{est-const-basic-j-6}
\begin{split}
\frac{1}{2}\frac{d}{dt}\| D_t{B}_j(t)\|_{L^2}^2
&+\|\sqrt{\sigma(\rho)}\curl{ D_t{B}}_j\|_{L^2}^2=\sum_{i=1}^5J_i,
\end{split}
\end{equation}
where
\begin{equation*}\label{est-const-basic-j-7}
\begin{split}
\sum_{i=1}^6K_i \eqdefa &
\int_{\R^3}(B\cdot\nabla) D_t{B}_j | D_t{u}_j\,dx
+\int_{\R^3} D_t{B}\cdot\nabla B_j | D_t{u}_j\,dx
\\&
-\int_{\R^3}[(B\cdot\nabla u)\cdot\nabla]B_j  | D_t{u}_j\,dx
-\int_{\R^3}\partial_k[(\partial_ku\cdot\nabla)u_j] |  D_t{u}_j\,dx
\\&
-\int_{\R^3}(\partial_ku\cdot\nabla)\partial_ku_j |  D_t{u}_j\,dx-\int_{\R^3}D_t\nabla\Pi_j\cdot D_t{u}_j\,dx
\end{split}
\end{equation*}
and
\begin{equation*}\label{est-const-basic-j-8}
\begin{split}
\sum_{i=1}^5J_i\eqdefa &
\int_{\R^3}(B\cdot\nabla)  D_t{u}_j | D_t{B}_j\,dx
+\int_{\R^3}( D_t{B}\cdot\nabla) u_j | D_t{B}_j\,dx
\\&
-\int_{\R^3}\bigl[\left((B\cdot\nabla)u\right)\cdot\nabla\bigr]u_j | D_t{B}_j\,dx
+\int_{\R^3}
\curl (\sigma(\rho)\nabla\,u^{\ell} \wedge \partial_{\ell} \,{B}_j) |
 D_t{B}_j\,dx\\
 &+\int_{\R^3}\nabla\,u^{i} \wedge \partial_{i} (\sigma(\rho)\curl\,{B}_j) |
 D_t{B}_j\,dx.
\end{split}
\end{equation*}
Due to $\dv\, B=0,$ one has
\begin{equation*}\label{est-const-basic-j-9}
K_1+J_1=0.
\end{equation*}
By H\"older's inequality, we obtain
\begin{equation*}\label{est-const-basic-j-10}
\begin{split}
|K_2|+|J_2|
&\leq
\|D_t{B}\|_{L^2} \|(\nabla B_j, \nabla u_j)\|_{L^3}
 \|(D_t{u}_j, D_t{B}_j)\|_{L^6}\\
 &
\lesssim
\|D_t{B}\|_{L^2}\|(\nabla B_j, \nabla u_j)\|_{L^3}
 \|(\nabla D_t{u}_j, \nabla D_t{B}_j)\|_{L^2}
\end{split}
\end{equation*}
and
\begin{equation*}\label{est-const-basic-j-11}
\begin{split}
|K_3|+|J_3|
&\le
\|B\|_{L^3}\|\nabla u\|_{L^6}\|(\nabla B_j, \nabla u_j)\|_{L^3}
 \|(D_t{u}_j, D_t{B}_j)\|_{L^6}
\\&
\lesssim
\|B\|_{L^3}\|\nabla u\|_{L^6}\|(\nabla B_j, \nabla u_j)\|_{L^3}
 \|(\nabla D_t{u}_j, \nabla D_t{B}_j)\|_{L^2}.
\end{split}
\end{equation*}
While due to the fact that $\dv\,u=0,$ we get, by using integration by parts, that
\begin{equation*}\label{est-const-basic-j-12}
|K_4|+|K_5|
\le
\|\nabla u\|_{L^6}\|\nabla u_j\|_{L^3}\|\nabla D_t{u}_j\|_{L^2}
\end{equation*}
and
\begin{equation*}\label{est-const-basic-j-13}
\begin{split}
K_6&=-\frac{d}{dt}\int_{\R^3}\nabla\Pi_j |(u\cdot\nabla)u_j(t)\,dx+K_6^{R}\with\\
K_6^{R}&\eqdefa \int_{\R^3}\nabla\Pi_j |(u_t\cdot\nabla)u_j\,dx+\int_{\R^3}\nabla\Pi_j| (u\cdot\nabla)\partial_tu_j\,dx-\int_{\R^3}(u\cdot\nabla) D_t{u}_j\nabla\Pi_j\,dx.
\end{split}
\end{equation*}
It is easy to observe that
\begin{equation*}\label{est-const-basic-j-15}
\begin{split}
|K_6^{R}|
\lesssim &\|\nabla\Pi_j\|_{L^2} \bigl(\|u_t\|_{L^3}\|\nabla u_j\|_{L^6}
+\|u\|_{L^\infty}\|\nabla\partial_tu_j\|_{L^2}
+\|u\|_{L^\infty}\|\nabla D_t{u}_j\|_{L^2}\bigr),
\end{split}
\end{equation*}
and
\begin{equation*}\label{est-const-basic-j-14}
\begin{split}
&|J_4|
\lesssim
\|\nabla u\|_{L^6}\|\curl{B_j}\|_{L^3}\|\curl D_t{B}_j\|_{L^2},\\
&|J_5|=|\int_{\R^3}\nabla\,u^{i} \wedge  (\sigma(\rho)\curl\,{B}_j) |
 \partial_{i}D_t{B}_j\,dx| \lesssim
\|\nabla u\|_{L^6}\|\curl{B_j}\|_{L^3}\|\nabla{D_t{B}_j}\|_{L^2}.
\end{split}
\end{equation*}

By substituting the above estimates into \eqref{est-const-basic-j-5} and \eqref{est-const-basic-j-6}, we obtain
\begin{equation}\label{est-const-basic-j-15}
\begin{split}
&\frac{d}{dt}\Bigl(\|\bigl(\sqrt{\rho} D_t{u}_j, D_t{B}_j\bigr)(t)\|_{L^2}^2+2\int_{\R^3}\nabla\Pi_j |(u\cdot\nabla)u_j(t)\,dx\Bigr)
+\|\bigl(\nabla D_t{u}_j,\curl{D_t{B}}_j\bigr)\|_{L^2}^2
\\&
\lesssim
\|D_t{B}\|_{L^2} \|(\nabla B_j, \nabla u_j)\|_{L^3}
 \|(\nabla D_t{u}_j, \nabla D_t{B}_j)\|_{L^2}
\\&\quad
+(1+\|B\|_{L^3})\|\nabla u\|_{L^6}\|(\nabla B_j, \nabla u_j)\|_{L^3}
 \|(\nabla D_t{u}_j, \nabla D_t{B}_j)\|_{L^2}
\\&\quad
+\|\nabla\Pi_j\|_{L^2}\bigl(\|u_t\|_{L^3}\|\nabla^2 u_j\|_{L^2}
+\|u\|_{L^\infty}\|(\nabla\partial_tu_j, \nabla D_t{u}_j)\|_{L^2}\bigr).
\end{split}
\end{equation}
Due to $\dv\,B_j=0,$  one has
\begin{equation*}\label{est-const-basic-j-16}
\begin{split}
\|\nabla D_t{B}_j\|_{L^2}
&\lesssim \|\curl D_t{B}_j\|_{L^2}+ \|\dv D_t{B}_j\|_{L^2}
\lesssim
\|\curl D_t{B}_j\|_{L^2}+\|\nabla u\nabla B_j\|_{L^2}\\
&\lesssim
\|\curl D_t{B}_j\|_{L^2}+\|\nabla u\|_{L^3}\|\nabla B_j\|_{L^6},\\
 \|\nabla D_t{u}_j\|_{L^2}&\lesssim \|\nabla \partial_t{u}_j\|_{L^2}+\|\nabla u\nabla u_j\|_{L^2}+\|u\cdot\nabla \nabla u_j\|_{L^2}\\
 &\lesssim \|\nabla \partial_t{u}_j\|_{L^2}+(\|\nabla u\|_{L^3}+\|u\|_{L^{\infty}})\|\nabla^2u_j\|_{L^2},\\
\end{split}
\end{equation*}
from which and \eqref{est-const-basic-j-2}, we infer
 \begin{equation}\label{est-const-basic-j-17}
\begin{split}
\|(\nabla D_t{u}_j, \nabla D_t{B}_j)\|_{L^2}\lesssim &\|(\nabla \partial_t{u}_j, \curl D_t{B}_j)\|_{L^2}\\
&+\bigl(\|\nabla u\|_{L^3}+\|u\|_{L^{\infty}}\bigr)
\|(\partial_tu_j,\partial_tB_j)\|_{L^2}.
\end{split}
\end{equation}

Thanks to  \eqref{est-const-basic-j-2}, \eqref{est-const-basic-j-15} and \eqref{est-const-basic-j-17}, we obtain
\begin{equation*}\label{est-const-basic-j-18}
\begin{split}
\frac{d}{dt}\Bigl(&\|\bigl(\sqrt{\rho} D_t{u}_j, D_t{B}_j(t)\bigr)\|_{L^2}^2+2\int_{\R^3}\nabla\Pi_j |(u\cdot\nabla)u_j(t)\,dx\Bigr) +2c_3\|(\nabla D_t{u}_j,\nabla \partial_t{u}_j, \nabla{D_t{B}}_j)\|_{L^2}^2\\
\lesssim &
\|D_t{B}\|_{L^2} \|(\nabla B_j, \nabla u_j)\|_{L^3}
 \|(\nabla D_t{u}_j, \nabla D_t{B}_j)\|_{L^2}
\\&
+(1+\|B\|_{L^3})\|\nabla u\|_{L^6}\|(\nabla B_j, \nabla u_j)\|_{L^3}
 \|(\nabla D_t{u}_j, \nabla D_t{B}_j)\|_{L^2}
\\&
+\|(\partial_tu_j,\partial_tB_j)\|_{L^2}\|u\|_{L^\infty}\|(\nabla\partial_tu_j, \nabla D_t{u}_j)\|_{L^2}\\
&+\bigl(\|u_t\|_{L^3}+\|\nabla u\|_{L^3}^2+\|u\|_{L^{\infty}}^2)
\|(\partial_tu_j,\partial_tB_j\bigr)\|_{L^2}^2.
\end{split}
\end{equation*}
Applying Young's inequality gives rise to
\begin{equation}\label{est-const-basic-j-22a}
\begin{split}
\frac{d}{dt}\Bigl(&\|\bigl(\sqrt{\rho} D_t{u}_j, D_t{B}_j\bigr)(t)\|_{L^2}^2+2\int_{\R^3}\nabla\Pi_j |(u\cdot\nabla)u_j(t)\,dx\Bigr)\\
 &+c_3\|(\nabla D_t{u}_j,\nabla \partial_t{u}_j, \nabla{D_t{B}}_j)\|_{L^2}^2\\
\lesssim &
\bigl(\|D_t{B}\|_{L^2}^2
+(1+\|B\|_{L^3}^2)\|\nabla^2 u\|_{L^2}^2\bigr)\|(\nabla B_j, \nabla u_j)\|_{L^3}^2
\\&
+ \bigl(\|u_t\|_{L^3}+\|\nabla u\|_{L^3}^2+\|u\|_{L^{\infty}}^2\bigr)
\|(\partial_tu_j,\partial_tB_j)\|_{L^2}^2.
\end{split}
\end{equation}
Notice that
\begin{equation*}\label{est-const-basic-j-23}
\begin{split}
\|(\nabla B_j, \nabla u_j)\|_{L^3}^2&\lesssim  \|(\nabla B_j, \nabla u_j)\|_{L^2}\|(\nabla B_j, \nabla u_j)\|_{L^6}\\
&\lesssim  \|(\nabla B_j, \nabla u_j)\|_{L^2}\|(\partial_tu_j,\partial_tB_j)\|_{L^2},
\end{split}
\end{equation*}
and
\begin{equation*}\label{est-const-basic-j-25}
|\int_{\R^3}\nabla\Pi_j |(u\cdot\nabla)u_j\,dx| \lesssim \|\nabla\Pi_j\|_{L^2} \|u\|_{L^{3}}\|\nabla u_j\|_{L^6} \lesssim  \|u\|_{L^{3}}\|(\partial_tu_j,\partial_tB_j)\|_{L^2}^2,
\end{equation*}
we get, by multiplying $t$ to \eqref{est-const-basic-j-22a}, that
\begin{equation}\label{est-const-basic-j-26}
\begin{split}
\frac{d}{dt}\Bigl(&\|t^{\frac{1}{2}}\,(\sqrt{\rho} D_t{u}_j, D_t{B}_j)(t)\|_{L^2}^2+2 t\,\int_{\R^3}\nabla\Pi_j |(u\cdot\nabla)u_j(t)\,dx\Bigr)\\
& +c_3\|t^{\frac{1}{2}}\,(\nabla D_t{u}_j,\nabla \partial_t{u}_j, \nabla{D_t{B}}_j)\|_{L^2}^2\\
\lesssim &\|\bigl(\sqrt{\rho} D_t{u}_j, D_t{B}_j\bigr)\|_{L^2}^2+\|u\|_{L^{3}}\|(\partial_tu_j,\partial_tB_j)\|_{L^2}^2\\
&+
\bigl(\|t^{\frac{1}{4}}\,D_t{B}\|_{L^2}^2
+(1+\|B\|_{L^3}^2)\|t^{\frac{1}{4}}\,\nabla^2 u\|_{L^2}^2\bigr)\|(\nabla B_j, \nabla u_j)\|_{L^2}\|t^{\frac{1}{2}}\,(\partial_tu_j,\partial_tB_j)\|_{L^2}
\\&
+ \|t^{\frac{1}{2}}\,u_t\|_{L^3}
\|t^{\frac{1}{2}}\,(\partial_tu_j,\partial_tB_j)\|_{L^2} \|(\partial_tu_j,\partial_tB_j)\|_{L^2}\\
& + (\|t^{\frac{1}{2}}\,\nabla u\|_{L^3}^2+\|t^{\frac{1}{2}}\,u\|_{L^{\infty}}^2)
\|(\partial_tu_j,\partial_tB_j)\|_{L^2}^2.
\end{split}
\end{equation}
Observing that
 \begin{equation*}\label{est-const-basic-j-19}
\begin{split}
&\| D_t{u}_j\|_{L^2}-\|u\|_{L^3}\|\nabla u_j\|_{L^6}
\le
\|\partial_tu_j\|_{L^2}
\le
\| D_t{u}_j\|_{L^2}+\|u\|_{L^3}\|\nabla u_j\|_{L^6},\\
&\| D_t{B}_j\|_{L^2}-\|u\|_{L^3}\|\nabla B_j\|_{L^6}
\le
\|\partial_tB_j\|_{L^2}
\le
\| D_t{B}_j\|_{L^2}+\|u\|_{L^3}\|\nabla B_j\|_{L^6},
\end{split}
\end{equation*}
 we deduce from the fact: $\|u\|_{L^\infty_T(L^3)} \lesssim \|(u_0, B_0)\|_{\dot{H}^{\frac{1}{2}}},$ which is sufficiently small, and \eqref{est-const-basic-j-2} that
 \begin{equation}\label{est-const-basic-j-20}
\begin{split}
&\| (D_t{u}_j, D_t{B}_j)\|_{L^2}\lesssim
\|(\partial_tu_j, \partial_tB_j)\|_{L^2} \lesssim\| (D_t{u}_j, D_t{B}_j)\|_{L^2}.
\end{split}
\end{equation}
Let
$$E_j^{(1)}(T)\eqdefa \|t^{\frac{1}{2}}\,(\sqrt{\rho} D_t{u}_j, D_t{B}_j)\|_{L^2_T(L^2)}^2+2 \|t\,\int_{\R^3}\nabla\Pi_j |(u\cdot\nabla)u_j\,dx\|_{L^{\infty}_T},$$
then it follows  from  \eqref{est-const-basic-j-20} that
 \begin{equation}\label{est-const-basic-j-28}
\begin{split}
&E_j^{(1)}(T)\lesssim
\|t^{\frac{1}{2}}\,(\partial_tu_j, \partial_tB_j)\|_{L^{\infty}_T(L^2)}^2 \lesssim E_j^{(1)}(T).
\end{split}
\end{equation}
By integrating \eqref{est-const-basic-j-26} over $[0, T]$ for $T<T^\ast,$ we find
\begin{equation*}\label{est-const-basic-j-29}
\begin{split}
E_j^{(1)}(T)&+\|t^{\frac{1}{2}}\,(\nabla D_t{u}_j,\nabla \partial_t{u}_j, \nabla{D_t{B}}_j)\|_{L^2_T(L^2)}^2\\
\leq  C_4\Bigl(&\|(D_t{u}_j, D_t{B}_j)\|_{L^2_T(L^2)}^2+ \|u\|_{L^{\infty}_T(L^{3})}\|(\partial_tu_j,\partial_tB_j)\|_{L^2_T(L^2)}^2\\
&+ \bigl((\|t^{\frac{1}{4}}\,D_t{B}\|_{L^2_T(L^2)}^2
+(1+\|B\|_{L^{\infty}_T(L^3)}^2)\|t^{\frac{1}{4}}\,\nabla^2 u\|_{L^2_T(L^2)}^2\bigr)\\
&\qquad\times\|(\nabla B_j, \nabla u_j)\|_{L^{\infty}_T(L^2)}\|t^{\frac{1}{2}}\,(\partial_tu_j,\partial_tB_j)\|_{L^{\infty}_T(L^2)}
\\&
+ \|t^{\frac{1}{2}}\,u_t\|_{L^{2}_T(L^3)}
\|t^{\frac{1}{2}}\,(\partial_tu_j,\partial_tB_j)\|_{L^{\infty}_T(L^2)}\|(\partial_tu_j,\partial_tB_j)\|_{L^2_T(L^2)}\\
& + \bigl(\|t^{\frac{1}{2}}\,\nabla u\|_{L^{\infty}_T(L^3)}^2+\|t^{\frac{1}{2}}\,u\|_{L^{\infty}_T(L^{\infty})}^2\bigr)
\|(\partial_tu_j,\partial_tB_j)\|_{L^2_T(L^2)}^2\Bigr),
\end{split}
\end{equation*}
which together with\eqref{est-const-basic-0}, \eqref{S2eq1}  and  \eqref{est-variable-j-1}  ensures that
\begin{equation*}\label{est-const-basic-j-30}
\begin{split}
E_j^{(1)}&(T)+\|t^{\frac{1}{2}}\,(\nabla D_t{u}_j,\nabla \partial_t{u}_j, \nabla{D_t{B}}_j)\|_{L^2_T(L^2)}^2
\leq C_5\Bigl(\bigl(1+ \|(u_0, B_0)\|_{\dot{H}^{\frac{1}{2}}}\bigr)\\
&\quad\times\bigl(2^{2j}\|(\dot\Delta_ju_0,\dot\Delta_jB_0)\|_{L^2}^2+\|(u_0, B_0)\|_{\dot{H}^{\frac{1}{2}}}
2^{j}\|(\dot\Delta_ju_0,\dot\Delta_jB_0)\|_{L^2}(E_j^{(1)}(T))^{\frac{1}{2}}\bigr) \\
&\qquad\qquad\qquad\qquad\qquad\qquad\qquad\qquad\quad+ \|(u_0, B_0)\|_{\dot{H}^{\frac{1}{2}}}^2
2^{2j}\|(\dot\Delta_ju_0,\dot\Delta_jB_0)\|_{L^2}^2\Bigr),
\end{split}
\end{equation*}
which implies
\begin{equation*}\label{est-const-basic-j-31}
\begin{split}
&E_j^{(1)}(T)+\|t^{\frac{1}{2}}\,(\nabla D_t{u}_j,\nabla \partial_t{u}_j, \nabla{D_t{B}}_j)\|_{L^2_T(L^2)}^2 \lesssim  2^{2j}\|(\dot\Delta_ju_0,\dot\Delta_jB_0)\|_{L^2}^2.
\end{split}
\end{equation*}

Similarly, we get, by multiplying $t^2$ to \eqref{est-const-basic-j-22a}, that
\begin{equation}\label{est-const-basic-j-32}
\begin{split}
\frac{d}{dt}\Bigl(&\|t\,(\sqrt{\rho} D_t{u}_j, D_t{B}_j)(t)\|_{L^2}^2+2\int_{\R^3}t^2\nabla\Pi_j |(u\cdot\nabla)u_j(t)\,dx\Bigr)\\
& +c_3\|t\,(\nabla D_t{u}_j,\nabla \partial_t{u}_j, \nabla{D_t{B}}_j)\|_{L^2}^2\\
\lesssim &\|t^{\frac{1}{2}}\,(\sqrt{\rho} D_t{u}_j, D_t{B}_j)\|_{L^2}^2+\|u\|_{L^{3}}\|t^{\frac{1}{2}}\,(\partial_tu_j,\partial_tB_j)\|_{L^2}^2\\
&+
\bigl(\|t^{\frac{1}{4}}\,D_t{B}\|_{L^2}^2
+(1+\|B\|_{L^3}^2)\|t^{\frac{1}{4}}\,\nabla^2 u\|_{L^2}^2\bigr)\|t^{\frac{1}{2}}\,(\nabla B_j, \nabla u_j)\|_{L^2}\|t\,(\partial_tu_j,\partial_tB_j)\|_{L^2}
\\&
+ \|t^{\frac{1}{2}}\,u_t\|_{L^3}
\|t^{\frac{1}{2}}\,(\partial_tu_j,\partial_tB_j)\|_{L^2} \|t\,(\partial_tu_j,\partial_tB_j)\|_{L^2}\\
&+ \bigl(\|t^{\frac{1}{2}}\,\nabla u\|_{L^3}^2+\|t^{\frac{1}{2}}\,u\|_{L^{\infty}}^2\bigr)
\|t^{\frac{1}{2}}\,(\partial_tu_j,\partial_tB_j)\|_{L^2}^2.
\end{split}
\end{equation}
Let
$$E_j^{(2)}(T)\eqdefa \|t\,(\sqrt{\rho} D_t{u}_j, D_t{B}_j)\|_{L^2_T(L^2)}^2+2 \|t^2\,\int_{\R^3}\nabla\Pi_j |(u\cdot\nabla)u_j(t)\,dx\|_{L^{\infty}_T},$$
we get, by a similar derivation of  \eqref{est-const-basic-j-28}, that
 \begin{equation*}\label{est-const-basic-j-33}
\begin{split}
&E_j^{(2)}(T)\lesssim
\|t \,(\partial_tu_j, \partial_tB_j)\|_{L^{\infty}_T(L^2)}^2 \lesssim E_j^{(2)}(T).
\end{split}
\end{equation*}
Then by integrating \eqref{est-const-basic-j-32} over $[0, T]$ for $T<T^\ast,$ we find
\begin{equation*}\label{est-const-basic-j-34}
\begin{split}
E_j^{(2)}(T)&+\|t^{\frac{1}{2}}\,(\nabla D_t{u}_j,\nabla \partial_t{u}_j, \nabla{D_t{B}}_j)\|_{L^2_T(L^2)}^2\\
\leq C_4\Bigl(&\|t^{\frac{1}{2}}\,(D_t{u}_j, D_t{B}_j)\|_{L^2_T(L^2)}^2+ \|u\|_{L^{\infty}_T(L^{3})}\|t^{\frac{1}{2}}\,(\partial_tu_j,\partial_tB_j)\|_{L^2_T(L^2)}^2\\
&+ \bigl(\|t^{\frac{1}{4}}\,D_t{B}\|_{L^2_T(L^2)}^2
+(1+\|B\|_{L^{\infty}_T(L^3)}^2)\|t^{\frac{1}{4}}\,\nabla^2 u\|_{L^2_T(L^2)}^2\bigr)\\
&\qquad\times\|t^{\frac{1}{2}}\,(\nabla B_j, \nabla u_j)\|_{L^{\infty}_T(L^2)}\|t\,(\partial_tu_j,\partial_tB_j)\|_{L^{\infty}_T(L^2)}
\\&
+ \|t^{\frac{1}{2}}\,u_t\|_{L^{2}_T(L^3)}
\|t^{\frac{1}{2}}\,(\partial_tu_j,\partial_tB_j)\|_{L^{\infty}_T(L^2)}\|t\,(\partial_tu_j,\partial_tB_j)\|_{L^2_T(L^2)}\\
&+ \bigl(\|t^{\frac{1}{2}}\,\nabla u\|_{L^{\infty}_T(L^3)}^2+\|t^{\frac{1}{2}}\,u\|_{L^{\infty}_T(L^{\infty})}^2\bigr)
\|t^{\frac{1}{2}}\,(\partial_tu_j,\partial_tB_j)\|_{L^2_T(L^2)}^2\Bigr),
\end{split}
\end{equation*}
which together with  \eqref{est-const-basic-0} and \eqref{est-variable-j-1} ensures that
\begin{equation*}\label{est-const-basic-j-35}
\begin{split}
E_j^{(2)}(T)+&\|t^{\frac{1}{2}}\,(\nabla D_t{u}_j,\nabla \partial_t{u}_j, \nabla{D_t{B}}_j)\|_{L^2_T(L^2)}^2\\
\leq C_5 \Bigl( \bigl(&1+\|(u_0, B_0)\|_{\dot{H}^{\frac{1}{2}}}^2\bigr) \|(\dot\Delta_ju_0,\dot\Delta_jB_0)\|_{L^2}^2\\
&
+  \|(u_0, B_0)\|_{\dot{H}^{\frac{1}{2}}}(1+\|(u_0, B_0)\|_{\dot{H}^{\frac{1}{2}}})
 \|(\dot\Delta_ju_0,\dot\Delta_jB_0)\|_{L^2}(E_j^{(2)}(T))^{\frac{1}{2}}\Bigr),
\end{split}
\end{equation*}
from which, we infer
\begin{equation*}\label{est-const-basic-j-36}
\begin{split}
&E_j^{(2)}(T)+\|t\,(\nabla D_t{u}_j,\nabla \partial_t{u}_j, \nabla{D_t{B}}_j)\|_{L^2_T(L^2)}^2 \lesssim  \|(\dot\Delta_ju_0,\dot\Delta_jB_0)\|_{L^2}^2.
\end{split}
\end{equation*}
Then it follows from  \eqref{est-const-basic-jL6-2} that
\begin{equation*}\label{est-const-basic-j-37}
\begin{split}
\|t\,\mathcal{Q}_j\|_{L^2_T(L^6)}&\lesssim
\|t\,(\partial_tu_j, D_tB_j)\|_{L^2_T(L^6)} +\|(u,B)\|_{L^2_T(L^{\infty})}\|t\,(\partial_tu_j,\partial_tB_j)\|_{L^{\infty}_T(L^2)}\\
&\lesssim
\|(\dot\Delta_ju_0,\dot\Delta_jB_0)\|_{L^2}.
\end{split}
\end{equation*}
We thus complete the proof of Lemma \ref{Prop-const-j-Lip-1}.
\end{proof}

\begin{lem}\label{prop-Lip1}{\sl Under the assumptions of Lemma \ref{Prop-const-j-Lip-1}, for any $T<T^\ast,$ there holds
\begin{equation}\label{est-const-hL6-j-0}
\begin{split}
\|t\,\nabla D_t{u}_j\|_{L^{\infty}_T(L^2)}&+\|t\,(\partial_t u_j, D_t u_j, \nabla^2 u_j, \nabla\Pi_j)\|_{L^\infty_T(L^6)}\\
&+\|t\,(\partial_t D_t{u}_j, \nabla D_t\Pi_j, \nabla^2 D_t{u}_j)\|_{L^2_T(L^2)}\lesssim 2^{j}\|(\dot\Delta_ju_0,\dot\Delta_jB_0)\|_{L^2}.
\end{split}
\end{equation}
}
\end{lem}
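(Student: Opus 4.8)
The plan is to derive all of \eqref{est-const-hL6-j-0} from a single second-order energy estimate on $u_j$. Since $\mu\equiv1$, the quantity $w_j\eqdefa D_t{u}_j$ solves the Stokes-type system \eqref{eqns-Dtub-j-1},
$$
\rho\,D_tw_j-\Delta w_j+D_t\nabla\Pi_j=f_j,\qquad \dv\,w_j=\partial_iu^k\,\partial_ku_j^i\eqdefa r_j ,
$$
the divergence being computed from $\dv\,u_j=0$ and $[\dv;D_t]u_j=\partial_iu^k\partial_ku_j^i$. The crux is to prove
$$
\|t\,\nabla D_t{u}_j\|_{L^\infty_T(L^2)}+\|t\,D_tD_t{u}_j\|_{L^2_T(L^2)}\lesssim 2^j\|(\dot\Delta_ju_0,\dot\Delta_jB_0)\|_{L^2};
$$
every other quantity in \eqref{est-const-hL6-j-0} will then follow by elliptic regularity for the Stokes operator, applied either to the $u_j$-equation or to the $w_j$-equation rewritten with $\nabla D_t\Pi_j$ as the pressure.

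For the core estimate I would take the $L^2$ inner product of the $w_j$-equation with $D_tw_j$. The inertial term gives $\|\sqrt\rho\,D_tw_j\|_{L^2}^2$; the dissipative term gives $\frac12\frac{d}{dt}\|\nabla w_j\|_{L^2}^2$ up to the commutator $\int_{\R^3}\nabla w_j:(\nabla u^\ell\,\partial_\ell w_j)\,dx$, which is $\lesssim\|\nabla u\|_{L^\infty}\|\nabla w_j\|_{L^2}^2$ and hence a Gronwall term because $\|t^{1/2}\nabla u\|_{L^2_T(L^\infty)}\lesssim\|(u_0,B_0)\|_{\dot H^{1/2}}$ by Proposition \ref{prop-const-priori-1}. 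The pressure term does not vanish since $\dv\,w_j=r_j\neq0$: one writes $D_t\nabla\Pi_j=\nabla D_t\Pi_j-\nabla u^\ell\,\partial_\ell\Pi_j$, integrates by parts, and uses $\dv(D_tw_j)=D_tr_j+\partial_iu^k\partial_k w_j^i$ together with the elliptic bound $\|\nabla D_t\Pi_j\|_{L^2}+\|\nabla^2w_j\|_{L^2}\lesssim\|f_j\|_{L^2}+\|\nabla u\|_{L^3}\|\nabla\Pi_j\|_{L^6}+\|\rho\,D_tw_j\|_{L^2}+\|\nabla r_j\|_{L^2}$ coming from the system itself; this contribution is then absorbed by the dissipation via Young's inequality, the remaining coefficients being controlled by the time-weighted norms of \eqref{est-const-basic-0}.

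The heart of the work is to bound $\int_{\R^3}f_j\cdot D_tw_j\,dx$ with the right weights and the correct $2^j$ scaling. Using the explicit form of $f_j$ in \eqref{eqns-Dtub-j-1}, the magnetic terms $(B\cdot\nabla)D_t{B}_j$, $(D_t{B}\cdot\nabla)B_j$, $((B\cdot\nabla)u\cdot\nabla)B_j$ are estimated by H\"older and Sobolev embedding, always pairing the frequency-localized factors — for which the $2^j$-scaled bounds of Lemmas \ref{S2lem1} and \ref{Prop-const-j-Lip-1} are used — against the full $u,B$ factors, for which the small or time-integrable bounds of \eqref{est-const-basic-0} are used; the terms $\partial_k(\partial_ku\cdot\nabla u_j)$ and $(\partial_ku\cdot\nabla)\partial_ku_j$ are recombined as $\Delta u\cdot\nabla u_j+2\,\partial_ku\cdot\nabla\partial_ku_j$ and controlled with $\|\nabla^2u\|_{L^2}\lesssim\|(u_t,B_t)\|_{L^2}$, $\|\nabla^2u_j\|_{L^2}\lesssim\|(\partial_tu_j,\partial_tB_j)\|_{L^2}$, $\|t^{1/2}\nabla^2u\|_{L^2_T(L^3)}\lesssim\|t^{1/4}\nabla^2u\|_{L^2_T(L^2)}^{1/2}\|t^{3/4}\nabla^2u\|_{L^2_T(L^6)}^{1/2}$ and interpolation, the weights being distributed so that only $2^j$-scaled norms of $u_j$ appear. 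Thus one arrives at a differential inequality $\frac{d}{dt}\|\nabla w_j\|_{L^2}^2+c\|D_tw_j\|_{L^2}^2\le a(t)\|\nabla w_j\|_{L^2}^2+b_j(t)$ with $\int_0^Ta(t)\,dt$ bounded by \eqref{est-const-basic-0}. Multiplying by $t^2$ and integrating, the extra term $2\int_0^Tt\,\|\nabla w_j\|_{L^2}^2\,dt=2\|t^{1/2}\nabla D_t{u}_j\|_{L^2_T(L^2)}^2$ is exactly $\lesssim 2^{2j}\|(\dot\Delta_ju_0,\dot\Delta_jB_0)\|_{L^2}^2$ by \eqref{est-const-basic-j-1}, while $\int_0^Tt^2b_j(t)\,dt\lesssim 2^{2j}\|(\dot\Delta_ju_0,\dot\Delta_jB_0)\|_{L^2}^2$ by the bounds just assembled (together with \eqref{S2eq1} and Proposition \ref{prop-vaviable-1}), so Gronwall's inequality yields the core estimate; then $\|t\,\partial_tD_t{u}_j\|_{L^2_T(L^2)}\le\|t\,D_tD_t{u}_j\|_{L^2_T(L^2)}+\|t^{1/2}u\|_{L^\infty_T(L^\infty)}\|t^{1/2}\nabla D_t{u}_j\|_{L^2_T(L^2)}\lesssim 2^j\|(\dot\Delta_ju_0,\dot\Delta_jB_0)\|_{L^2}$.

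Finally I would read off the remaining bounds from elliptic regularity. From $-\Delta u_j+\nabla\Pi_j=(B\cdot\nabla)B_j-\rho\,D_t{u}_j$, $\dv\,u_j=0$, Stokes estimates give $\|\nabla^2u_j\|_{L^6}+\|\nabla\Pi_j\|_{L^6}\lesssim\|\nabla D_t{u}_j\|_{L^2}+\|B\|_{L^\infty}\|\nabla B_j\|_{L^6}$; multiplying by $t$, taking $L^\infty_T$ and invoking the core estimate, $\|t\nabla B_j\|_{L^\infty_T(L^6)}\lesssim\|(\dot\Delta_ju_0,\dot\Delta_jB_0)\|_{L^2}$ from \eqref{est-const-basic-j-1a} and $\|t^{1/2}B\|_{L^\infty_T(L^\infty)}\lesssim\|(u_0,B_0)\|_{\dot H^{1/2}}$ from \eqref{est-const-basic-0}, one gets the $L^\infty_T(L^6)$-bound of $t\,(\nabla^2u_j,\nabla\Pi_j)$, and then of $t\,(\partial_tu_j,D_t{u}_j)$ via $\|u\cdot\nabla u_j\|_{L^6}\lesssim\|u\|_{L^\infty}\|\nabla u_j\|_{L^6}$ and $\|D_t{u}_j\|_{L^6}\lesssim\|\nabla D_t{u}_j\|_{L^2}$. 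Likewise, the elliptic estimate for the $w_j$-system gives $\|\nabla^2D_t{u}_j\|_{L^2}+\|\nabla D_t\Pi_j\|_{L^2}\lesssim\|f_j\|_{L^2}+\|\nabla u\|_{L^3}\|\nabla\Pi_j\|_{L^6}+\|\rho\,D_tD_t{u}_j\|_{L^2}+\|\nabla r_j\|_{L^2}$, and after multiplying by $t$ and taking $L^2_T$ each term is $\lesssim 2^j\|(\dot\Delta_ju_0,\dot\Delta_jB_0)\|_{L^2}$ by the core estimate, the bound $\|t\nabla r_j\|_{L^2_T(L^2)}\lesssim\|t^{1/2}\nabla^2u\|_{L^2_T(L^3)}\|t^{1/2}\nabla u_j\|_{L^\infty_T(L^6)}+\|t^{1/2}\nabla u\|_{L^2_T(L^\infty)}\|t^{1/2}\nabla^2u_j\|_{L^\infty_T(L^2)}$, and \eqref{est-const-basic-0}, \eqref{S2eq1}, \eqref{est-const-basic-j-1}. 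The \emph{main obstacle} is the bookkeeping in $\int f_j\cdot D_tw_j$: one must arrange every H\"older splitting and every time weight so that each factor involving a frequency-localized function is measured by one of the $2^j$-scaled norms of Lemmas \ref{S2lem1}--\ref{Prop-const-j-Lip-1} rather than by a $t$-weighted norm without the $2^j$, while the pressure term forces the elliptic estimate for $\nabla D_t\Pi_j$ to be used simultaneously inside the energy argument.
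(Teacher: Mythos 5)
Your proposal is correct and follows essentially the same route as the paper: a $t^2$-weighted second-order energy estimate for $D_t{u}_j$ (the paper tests \eqref{est-const-hL6-j-1} against $\partial_tD_t{u}_j$ rather than $D_tD_t{u}_j$, a cosmetic variant whose extra commutator you correctly absorb via $\|t^{1/2}\nabla u\|_{L^2_T(L^\infty)}$), with the pressure term $D_t\nabla\Pi_j$ handled exactly as in the paper by splitting off $\nabla u\cdot\nabla\Pi_j$, integrating by parts against the nonzero divergence of $D_t{u}_j$, and closing with an elliptic bound for $\|\nabla D_t\Pi_j\|_{L^2}$ read off from the equation. The remaining bounds in \eqref{est-const-hL6-j-0} are then obtained from Stokes regularity for the $u_j$- and $D_t{u}_j$-systems just as in the paper, your $\|\nabla r_j\|_{L^2}$ bookkeeping being the explicit form of the paper's cross-term computation for $\|\nabla D_t\Pi_j-D_t\Delta u_j\|_{L^2}^2$.
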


\begin{proof}
Let us recall from \eqref{eqns-Dtub-j-1} that
\begin{equation}\label{est-const-hL6-j-1}
\rho \partial_t(D_t{u}_j) +\rho u \cdot \nabla (D_t{u}_j)-\Delta D_t{u}_j+D_t \nabla\Pi_j=f_j.
\end{equation}
By
taking $L^2$ inner product of  \eqref{est-const-hL6-j-1}  with $\partial_t D_t{u}_j,$ we find
\begin{equation}\label{est-const-hL6-j-2}
\begin{split}
\|\sqrt{\rho}\,\partial_t D_t{u}_j\|_{L^2}^2
+&\frac{1}{2}\frac{d}{dt}\|\nabla D_t{u}_j\|_{L^2}^2=\sum_{i=1}^7L_i\with\\
\sum_{i=1}^7L_i\eqdefa& -\int_{\R^3}\rho(u\cdot\nabla) D_t u_j |\partial_t D_t{u}_j\,dx
+\int_{\R^3}(B\cdot\nabla) D_t{B}_j |\partial_t D_t{u}_j\,dx
\\&
+\int_{\R^3}(D_t{B}\cdot\nabla)B_j | \partial_t D_t{u}_j\,dx
-\int_{\R^3}[(B\cdot\nabla u)\cdot\nabla]B_j | \partial_t D_t{u}_j\,dx
\\&
-\int_{\R^3}\partial_k[(\partial_ku\cdot\nabla)u_j] | \partial_t D_t{u}_j\,dx
-\int_{\R^3}(\partial_ku\cdot\nabla)\partial_ku_j | \partial_t D_t{u}_j\,dx
\\&
-\int_{\R^3}D_t\nabla\Pi_j|
\partial_t D_t{u}_j\,dx.
\end{split}
\end{equation}
It is easy to observe that
\begin{equation*}\label{est-const-hL6-j-4}
\begin{split}
|L_1|+|L_2|+|L_3|
\lesssim
\bigl(\|u\|_{L^\infty}\|\nabla D_t{u}_j\|_{L^2}&+\|B\|_{L^\infty}\|\nabla D_t{B}_j\|_{L^2}\\
&+\|D_t{B}\|_{L^3}\|\nabla{B}_j\|_{L^6}\bigr)
\|\partial_t D_t{u}_j\|_{L^2},
\end{split}
\end{equation*}
and
\begin{align*}
&|L_4|
\le
\|B\|_{L^6}\|\nabla u\|_{L^6}\|\nabla{B}_j\|_{L^6}
\|\partial_t D_t{u}_j\|_{L^2},\\
&
|L_5|+|L_6|
\le
\bigl(\|\nabla^2 u\|_{L^3}
\|\nabla u_j\|_{L^6}+\|\nabla u\|_{L^3}\|\nabla^2 u_j\|_{L^6}\bigr)
\|\partial_t D_t{u}_j\|_{L^2}.
\end{align*}

Notice that $
D_t\nabla\Pi_j
=
\nabla D_t\Pi_j-(\nabla u\cdot\nabla)\Pi_j,$ one has
\begin{equation*}\label{est-const-hL6-j-5}
|L_7| \leq |\int_{\R^3}\nabla{D}_t\Pi_j|\partial_t D_t{u}_j\,dx|+ |\int_{\R^3}(\nabla u\cdot\nabla)\Pi_j\partial_t D_t{u}_j\,dx |.
\end{equation*}
Yet due to $\dive u_j=0,$ we get, by using integration by parts, that
\begin{align*}
\bigl|\int_{\R^3}\nabla D_t\Pi_j|\partial_t D_t{u}_j\,dx\bigr|
& =
\bigl|\int_{\R^3}D_t\Pi_j\partial_t\bigl[\partial_ku^{\ell}\partial_{\ell}u_j^k\bigr]\,dx\bigr|
\\&
=\bigl|\int_{\R^3}D_t\Pi_j\,\dv\bigl[\partial_tu\cdot\nabla u_j\bigr]\,dx
+\int_{\R^3} D_t\Pi_j\,\dv\bigl[\partial_tu_j\cdot\nabla u\bigr]\,dx\bigr|
\\&
=\bigl|\int_{\R^3}\nabla D_t\Pi_j |(u_t\cdot\nabla) u_j\,dx
+\int_{\R^3}\nabla D_t\Pi_j |(\partial_tu_j\cdot\nabla) u\,dx\bigr|
\\&
\le
\bigl(\|u_t\|_{L^3}\|\nabla u_j\|_{L^6}
+\|\partial_tu_j\|_{L^6}\|\nabla u\|_{L^3}\bigr)\|\nabla D_t\Pi_j\|_{L^2},
\end{align*}
we thus obtain
\begin{equation*}\label{est-const-hL6-j-8}
\begin{split}
 |L_7|\lesssim &
\|\nabla u\|_{L^3}\|\nabla\Pi_j\|_{L^6}
\|\partial_t D_t{u}_j\|_{L^2}\\
&+\bigl(\|u_t\|_{L^3}\|\nabla u_j\|_{L^6}
+\|\partial_tu_j\|_{L^6}\|\nabla u\|_{L^3}\bigr)\|\nabla D_t\Pi_j\|_{L^2}.
\end{split}
\end{equation*}

By substituting the above estimates into \eqref{est-const-hL6-j-2}, we find
\begin{equation}\label{est-const-hL6-j-16a}
\begin{split}
\frac{d}{dt}\|\nabla &D_t{u}_j(t)\|_{L^2}^2+2\|\sqrt{\rho}\,\partial_t D_t{u}_j\|_{L^2}^2\\
\lesssim \Bigl(&\|(u, B)\|_{L^\infty}\|(\nabla D_t{u}_j, \nabla D_t{B}_j)\|_{L^2} +\|\nabla u\|_{L^3}\|(\nabla^2 u_j, \nabla\Pi_j)\|_{L^6}\\
&+\bigl(\|(D_t{B}, \nabla^2 u)\|_{L^3}+
\|B\|_{L^6}\|\nabla u\|_{L^6}\bigr)\|(\nabla{u}_j, \nabla{B}_j)\|_{L^6}\Bigr)
\|\partial_t D_t{u}_j\|_{L^2}\\
& +\bigl(\|u_t\|_{L^3}\|\nabla u_j\|_{L^6}
+\|\partial_tu_j\|_{L^6}\|\nabla u\|_{L^3}\bigr)\|\nabla D_t\Pi_j\|_{L^2}.
\end{split}
\end{equation}

In order to control $\|\nabla D_t\Pi_j\|_{L^2}$, in view of  \eqref{est-const-hL6-j-1}, we write
\begin{equation*}\label{est-const-hL6-j-9}
\nabla D_t \Pi_j- D_t\Delta{u}_j=-\rho \,D_t^2{u}_j +D_tB\cdot \nabla B_j+B\cdot D_t\nabla B_j+\nabla u\cdot \nabla\Pi_j,
\end{equation*}
which implies
\begin{equation*}\label{est-const-hL6-j-10}
\begin{split}
\|\nabla D_t \Pi_j- D_t\Delta{u}_j\|_{L^2}^2
\lesssim&
\|D_t^2u_j\|_{L^2}^2+\|D_tB\|_{L^3}^2\|\nabla B_j\|_{L^6}^2\\
&+\|B\|_{L^{\infty}}^2\|D_t\nabla B_j\|_{L^2}^2
+\|\nabla u\|_{L^3}^2\|\nabla\Pi_j\|_{L^6}^2.
\end{split}
\end{equation*}
Notice from $\dv\,u=0$ that,
\begin{equation*}\label{est-const-hL6-j-11}
\begin{split}
&\|\nabla D_t \Pi_j- D_t\Delta{u}_j\|_{L^2}^2=\|\nabla D_t \Pi_j\|_{L^2}^2+\|D_t\Delta{u}_j\|_{L^2}^2-2\int_{\mathbb{R}^3}\nabla D_t \Pi_j| D_t\Delta{u}_j\,dx,\\
&\bigl|\int_{\mathbb{R}^3}\nabla D_t \Pi_j| D_t\Delta{u}_j\,dx\bigr|=\bigl|\int_{\mathbb{R}^3}D_t \Pi_j| \nabla \cdot (D_t\Delta{u}_j)\,dx\bigr|\\
&\qquad\qquad\qquad\qquad\qquad=\bigl|\int_{\mathbb{R}^3}\partial_k D_t \Pi_j| (\partial_{\ell}u^{k}\Delta{u}_j^{\ell})\,dx\bigr|\lesssim\|\nabla D_t \Pi_j\|_{L^2} \|\nabla u\|_{L^3}\|\Delta{u}_j\|_{L^6}
\end{split}
\end{equation*}
and
\begin{equation*}\label{est-const-hL6-j-12}
\begin{split}
&\|D_t^2u_j\|_{L^2}
\lesssim
\|\partial_t D_t{u}_j\|_{L^2}+\|u\|_{L^\infty}\|\nabla D_t{u}_j\|_{L^2},\\
&\|D_t\nabla B_j\|_{L^2}\lesssim
\|\nabla D_t B_j\|_{L^2}+\|\nabla u\|_{L^3}\|\nabla {B}_j\|_{L^6},
\end{split}
\end{equation*}
we thus obtain
\begin{equation*}\label{est-const-hL6-j-13}
\begin{split}
&\|\nabla D_t\Pi_j\|_{L^2}^2+\|D_t\Delta{u}_j\|_{L^2}^2\\
&\lesssim
\|\partial_t D_t{u}_j\|_{L^2}^2+\|(u, B)\|_{L^\infty}^2\|(\nabla D_t{u}_j, \nabla D_t B_j)\|_{L^2}^2
\\
&+ (\|D_tB\|_{L^3}^2+\|B\|_{L^{\infty}}^2\|\nabla u\|_{L^3}^2) \|\nabla {B}_j\|_{L^6}^2
+\|\nabla u\|_{L^3}^2\|(\nabla^2 u_j, \nabla\Pi_j)\|_{L^6}^2.
\end{split}
\end{equation*}
By substituting the above estimate into \eqref{est-const-hL6-j-16a}, we find
\begin{equation*}\label{est-const-hL6-j-14}
\begin{split}
\frac{d}{dt}&\|\nabla D_t{u}_j(t)\|_{L^2}^2+2\|\sqrt{\rho}\,\partial_t D_t{u}_j\|_{L^2}^2\\
\lesssim& \Bigl(\|(u, B)\|_{L^\infty}\|(\nabla D_t{u}_j, \nabla D_t{B}_j)\|_{L^2} +\|\nabla u\|_{L^3}\|(\partial_t u_j, \nabla^2 u_j, \nabla\Pi_j)\|_{L^6}\\
& +\bigl(\|(D_t{B}, \nabla^2 u, \partial_t u)\|_{L^3}+
\|B\|_{L^6}\|\nabla u\|_{L^6})\|(\nabla{u}_j, \nabla{B}_j\bigr)\|_{L^6}\Bigr)
\|\partial_t D_t{u}_j\|_{L^2}\\
&+\bigl(\|u_t\|_{L^3}\|\nabla u_j\|_{L^6}
+\|\partial_tu_j\|_{L^6}\|\nabla u\|_{L^3}\bigr)\Bigl( \|(u, B)\|_{L^\infty}\|(\nabla D_t{u}_j, \nabla D_t B_j)\|_{L^2}\\
&
+ (\|D_tB \|_{L^3}+\|B\|_{L^{\infty}}\|\nabla u\|_{L^3}) \|\nabla {B}_j\|_{L^6}
+\|\nabla u\|_{L^3}\|(\nabla^2 u_j, \nabla\Pi_j)\|_{L^6}\Bigr).
\end{split}
\end{equation*}
Applying Young's inequality yields
\begin{equation*}\label{est-const-hL6-j-16}
\begin{split}
&\frac{d}{dt}\|\nabla D_t{u}_j(t)\|_{L^2}^2+c_5\|\partial_t D_t{u}_j\|_{L^2}^2\\
&\lesssim  \|(u, B)\|_{L^\infty}^2\|(\nabla D_t{u}_j, \nabla D_t{B}_j)\|_{L^2}^2 +\|\nabla u\|_{L^3}^2\|(\partial_t u_j, \nabla^2 u_j, \nabla\Pi_j)\|_{L^6}^2\\
& +\bigl(\|(D_t{B}, \nabla^2 u, \partial_t u)\|_{L^3}^2+
\|B\|_{L^6}^2\|\nabla u\|_{L^6}^2+\|B\|_{L^{\infty}}^2\|\nabla u\|_{L^3}^2\bigr)\|(\nabla{u}_j, \nabla{B}_j)\|_{L^6}^2,
\end{split}
\end{equation*}
from which and \eqref{est-const-basic-j-2}, we infer
\begin{equation}\label{est-const-hL6-j-16}
\begin{split}
&\frac{d}{dt}\|\nabla D_t{u}_j(t)\|_{L^2}^2+c_5\|\partial_t D_t{u}_j\|_{L^2}^2\\
&\lesssim  \|(u, B)\|_{L^\infty}^2\|(\nabla D_t{u}_j, \nabla D_t{B}_j)\|_{L^2}^2 +\|\nabla u\|_{L^3}^2\|(\partial_t u_j, \nabla^2 u_j, \nabla\Pi_j)\|_{L^6}^2\\
& +\bigl(\|(D_t{B}, \nabla^2 u, u_t)\|_{L^3}^2+
\|B\|_{L^6}^2\|\nabla u\|_{L^6}^2+\|B\|_{L^{\infty}}^2\|\nabla u\|_{L^3}^2\bigr)\|(\partial_t{u}_j, \partial_t{B}_j)\|_{L^2}^2.
\end{split}
\end{equation}
Due to  $\dv\,u=0$, one has
\begin{align*}
\|(\nabla D_t\Pi_j, \nabla^2 D_t{u}_j)\|_{L^2}^2
\lesssim &\|2\partial_ku \cdot \nabla \partial_k {u}_j+\Delta u\cdot \nabla {u}_j\|_{L^2}^2+\|(\nabla D_t\Pi_j, D_t\Delta{u}_j)\|_{L^2}^2\\
\lesssim & \|\partial_t D_t{u}_j\|_{L^2}^2+\|(u, B)\|_{L^\infty}^2\|(\nabla D_t{u}_j, \nabla D_t B_j)\|_{L^2}^2
\\
&+ \bigl(\|(D_tB, \nabla^2u)\|_{L^3}^2+\|B\|_{L^{\infty}}^2\|\nabla u\|_{L^3}^2\bigr) \|(\nabla {u}_j, \nabla {B}_j)\|_{L^6}^2\\
&+\|\nabla u\|_{L^3}^2\|(\nabla^2 u_j, \nabla\Pi_j)\|_{L^6}^2.
\end{align*}
While in view of  \eqref{model-3d-freq-1} with $\mu(\rho)=1,$ we deduce from the classical estimates on Stokes system that
\begin{equation*}\label{est-const-hL6-j-19}
\begin{split}
\|(\partial_t u_j, D_t u_j, \nabla^2 u_j, \nabla\Pi_j)\|_{L^6}
&\lesssim
\|D_tu_j\|_{L^6}
+\|(u,B)\|_{L^{\infty}}\|(\nabla u_j,\nabla B_j)\|_{L^6}.
\end{split}
\end{equation*}
Thanks to   \eqref{est-basic-2-23}, we obtain
\begin{equation}\label{est-const-hL6-j-19aaa}
\begin{split}
\|(\partial_t u_j, D_t u_j, \nabla^2 u_j, \nabla\Pi_j)\|_{L^6} \lesssim
\|\nabla D_tu_j\|_{L^2}
+\|(u,B)\|_{L^{\infty}}\|(\partial_t u_j, \partial_t  B_j)\|_{L^2}
\end{split}
\end{equation}
and
 \begin{equation}\label{est-const-hL6-j-19bbb}
\begin{split}
\|(\nabla D_t\Pi_j, \nabla^2 D_t{u}_j)&\|_{L^2}^2\lesssim\bigl(\|(D_tB, \nabla^2u)\|_{L^3}^2+\|(u, B)\|_{L^{\infty}}^2\|\nabla u\|_{L^3}^2\bigr) \|(\partial_t u_j, \partial_t  B_j)\|_{L^2}^2\\
&+\|\partial_t D_t{u}_j\|_{L^2}^2+(\|(u, B)\|_{L^\infty}^2+\|\nabla u\|_{L^3}^2)\|(\nabla D_t{u}_j, \nabla D_t B_j)\|_{L^2}^2.
\end{split}
\end{equation}

 By combining \eqref{est-const-hL6-j-16} with \eqref{est-const-hL6-j-19aaa}-\eqref{est-const-hL6-j-19bbb}  and using the
  fact: $\|D_t{B}\|_{L^3}\lesssim \|\partial_t{B}\|_{L^3}+\|u\|_{L^{6}}\|\nabla B\|_{L^6}$, we achieve
 \begin{equation}\label{est-const-hL6-j-16ccc}
\begin{split}
\frac{d}{dt}\|\nabla &D_t{u}_j(t)\|_{L^2}^2+c_6\|(\partial_t D_t{u}_j, \nabla D_t\Pi_j, \nabla^2 D_t{u}_j)\|_{L^2}^2\\
\lesssim &  (\|(u, B)\|_{L^\infty}^2+\|\nabla u\|_{L^3}^2)\|(\nabla D_t{u}_j, \nabla D_t B_j)\|_{L^2}^2\\
& +\bigl(\|({B}_t, \nabla^2 u, u_t)\|_{L^3}^2+
\|(u, B)\|_{L^6}^2\|(\nabla u, \nabla B)\|_{L^6}^2\\
&+\|(u, B)\|_{L^{\infty}}^2\|\nabla u\|_{L^3}^2\bigr)\|(\partial_t{u}_j, \partial_t{B}_j)\|_{L^2}^2.
\end{split}
\end{equation}
By multiplying \eqref{est-const-hL6-j-16ccc} by $t^2$ and then integrating the resulting inequality over $[0,T],$ we obtain
\begin{equation*}\label{est-const-hL6-j-18}
\begin{split}
\|t\,&\nabla D_t{u}_j\|_{L^{\infty}_T(L^2)}^2+\|t\,(\partial_t D_t{u}_j, \nabla D_t\Pi_j, \nabla^2 D_t{u}_j)\|_{L^2_T(L^2)}^2\lesssim  \|{t}^{\frac{1}{2}}\,\nabla D_t{u}_j\|_{L^2_T(L^2)}^2 \\
&+\bigl(\|{t}^{\frac{1}{2}}\,(u, B)\|_{L^{\infty}_T(L^\infty)}^2+\|t^{\frac{1}{2}}\,\nabla u\|_{L^{\infty}_T(L^3)}^2\bigr)\|{t}^{\frac{1}{2}}\,(\nabla D_t{u}_j, \nabla D_t{B}_j)\|_{L^2_T(L^2)}^2 \\
& +\bigl(\|{t}^{\frac{1}{2}}\,({B}_t, \nabla^2 u,  u_t)\|_{L^2_T(L^3)}^2+
\|{t}^{\frac{1}{4}}\,(u, B)\|_{L^{\infty}_T(L^6)}^2\|{t}^{\frac{1}{4}}\,(\nabla u, \nabla B)\|_{L^2_T(L^6)}^2\\
&\quad +\|{t}^{\frac{1}{2}}\,(u, B)\|_{L^{\infty}_T(L^{\infty})}^2\|\nabla u\|_{L^2_T(L^3)}^2\bigr)\|{t}^{\frac{1}{2}}\,(\partial_t{u}_j, \partial_t{B}_j)\|_{L^{\infty}_T(L^2)}^2,
\end{split}
\end{equation*}
from which, \eqref{est-const-basic-0}, \eqref{est-const-basic-j-1} and the inequality
 \begin{align*}
 \|{t}^{\frac{1}{2}}\,({B}_t, \nabla^2 u,  u_t)\|_{L^2_T(L^3)}^2\lesssim
  &\|{t}^{\frac{1}{4}}\,({B}_t, \nabla^2 u, u_t)\|_{L^2_T(L^2)}\|{t}^{\frac{3}{4}}\,({B}_t, \nabla^2 u, u_t)\|_{L^2_T(L^6)}\\
 \lesssim& \|{t}^{\frac{1}{4}}\,({B}_t, \nabla^2 u, u_t)\|_{L^2_T(L^2)}\bigl(\|{t}^{\frac{3}{4}}\,(\nabla{B}_t, \nabla u_t)\|_{L^2_T(L^2)}\\
 &+\|{t}^{\frac{3}{4}}\,\nabla^2 u\|_{L^2_T(L^6)}\bigr)\lesssim
\|(u_0,B_0)\|_{\dot H^{\frac{1}{2}}}^2,
\end{align*}
we infer
\begin{equation}\label{est-const-hL6-j-19}
\begin{split}
&\|t\,\nabla D_t{u}_j\|_{L^{\infty}_T(L^2)}^2+\|t\,(\partial_t D_t{u}_j, \nabla D_t\Pi_j, \nabla^2 D_t{u}_j)\|_{L^2_T(L^2)}^2\\
&\lesssim \bigl(1+\|(u_0,B_0)\|_{\dot H^{\frac{1}{2}}}^4\bigr) 2^{2j}\|(\dot\Delta_ju_0,\dot\Delta_jB_0)\|_{L^2}^2\lesssim 2^{2j}\|(\dot\Delta_ju_0,\dot\Delta_jB_0)\|_{L^2}^2.
\end{split}
\end{equation}
\eqref{est-const-hL6-j-19} together with \eqref{est-const-hL6-j-19aaa} ensures that
\begin{equation}\label{est-const-hL6-j-22}
\begin{split}
&\|t\,(\partial_t u_j, D_t u_j, \nabla^2 u_j, \nabla\Pi_j)\|_{L^\infty_t(L^6)} \lesssim
\|t\,\nabla D_t{u}_j\|_{L^{\infty}_T(L^2)}^2\\
&\qquad +\|{t}^{\frac{1}{2}}\,(u, B)\|_{L^{\infty}_T(L^{\infty})}^2 \|{t}^{\frac{1}{2}}\,(\partial_t{u}_j, \partial_t{B}_j)\|_{L^{\infty}_T(L^2)}^2\lesssim  2^{2j}\|(\dot\Delta_ju_0,\dot\Delta_jB_0)\|_{L^2}^2.
\end{split}
\end{equation}

By summarizing the estimates \eqref{est-const-hL6-j-19} and \eqref{est-const-hL6-j-22} we conclude the proof of \eqref{est-const-hL6-j-0}.
This completes the proof of  Lemma \ref{prop-Lip1}.
\end{proof}

\begin{prop}\label{prop-B21-lip2}{\sl Under the assumptions of Lemma \ref{Prop-const-j-Lip-1}, if we assume in addition that $(u_0, B_0) \in \dot{B}^{\frac{1}{2}}_{2,1}\times \dot{B}^{\frac{1}{2}}_{2,1}$, then  \eqref{est-const-B21-0} holds for any $T<T^\ast.$
}
\end{prop}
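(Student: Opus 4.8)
The plan is to rerun, now in the $\dot B^{\frac12}_{2,1}$ framework, exactly the dyadic bookkeeping already used to establish \eqref{est-variable-2} and \eqref{est-const-basic-0}: decompose $u=\sum_{j}u_j$, $B=\sum_j B_j$, $\nabla\Pi=\sum_j\nabla\Pi_j$ in $\mathcal{S}'_h$ as in \eqref{identity-1}, feed on the frequency‑localized bounds of Lemmas \ref{S2lem1}, \ref{Prop-const-j-Lip-1} and \ref{prop-Lip1}, and sum up. The one structural change is that, under the extra hypothesis $(u_0,B_0)\in\dot B^{\frac12}_{2,1}$, one has $\|(\dot\Delta_j u_0,\dot\Delta_j B_0)\|_{L^2}=d_j 2^{-\frac j2}\|(u_0,B_0)\|_{\dot B^{\frac12}_{2,1}}$ with $(d_j)_{j\in\Z}\in\ell^1$, $\sum_j d_j=1$; hence every dyadic sum carried out in $\ell^2(\Z)$ in the $\dot H^{\frac12}$ proof can now be carried out in $\ell^1(\Z)$, and this is precisely what makes the endpoint spaces $L^1_T(L^3)$ and $L^1_T(L^\infty)$ reachable. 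Below I abbreviate $a_j\eqdefa\|(\dot\Delta_j u_0,\dot\Delta_j B_0)\|_{L^2}$, so that $2^{\frac j2}a_j=d_j\,\|(u_0,B_0)\|_{\dot B^{\frac12}_{2,1}}$.

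First I would treat the three Besov norms of $(u,B)$. For $\|(u,B)\|_{\widetilde L^\infty_T(\dot B^{\frac12}_{2,1})}$ I write $\dot\Delta_k u=\sum_j\dot\Delta_k u_j$ and combine $\|\dot\Delta_k u_j\|_{L^\infty_T(L^2)}\lesssim 2^{-k}\|\nabla u_j\|_{L^\infty_T(L^2)}\lesssim 2^{j-k}a_j$ (Bernstein and the second line of \eqref{S2eq1}) with $\|\dot\Delta_k u_j\|_{L^\infty_T(L^2)}\lesssim\|u_j\|_{L^\infty_T(L^2)}\lesssim a_j$ (first line of \eqref{S2eq1}); this gives $2^{\frac k2}\|\dot\Delta_k u\|_{L^\infty_T(L^2)}\lesssim\bigl(\sum_j 2^{-\frac{|j-k|}2}d_j\bigr)\|(u_0,B_0)\|_{\dot B^{\frac12}_{2,1}}$, which is summable over $k$ by Young's inequality on $\ell^1(\Z)$; the $B$‑part is identical. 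For $\|u\|_{\widetilde L^2_T(\dot B^{\frac32}_{2,1})}$ I argue the same way, using for $j\le k$ the bound $\|\dot\Delta_k u_j\|_{L^2_T(L^2)}\lesssim 2^{-2k}\|\nabla^2u_j\|_{L^2_T(L^2)}\lesssim 2^{j-2k}a_j$ (since $\nabla^2 u_j\in\mathcal Q_j$ and $\|\mathcal Q_j\|_{L^2_T(L^2)}\lesssim 2^j a_j$ by \eqref{est-const-basic-j-1}) and for $j\ge k$ the bound $\|\dot\Delta_k u_j\|_{L^2_T(L^2)}\lesssim 2^{-k}\|\nabla u_j\|_{L^2_T(L^2)}\lesssim 2^{-k}a_j$ (first line of \eqref{S2eq1}); once more the outcome is $\sum_j 2^{-\frac{|j-k|}2}d_j$. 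Finally $\|B\|_{\widetilde L^2_T(\dot B^{\frac12}_{6,1})}$ follows from pairing $\|\dot\Delta_k B_j\|_{L^2_T(L^6)}\lesssim 2^{-k}\|\nabla B_j\|_{L^2_T(L^6)}\lesssim 2^{j-k}a_j$ (Bernstein and \eqref{est-const-basic-j-1}) with $\|\dot\Delta_k B_j\|_{L^2_T(L^6)}\lesssim 2^{k}\|\dot\Delta_k B_j\|_{L^2_T(L^2)}\lesssim\|\nabla B_j\|_{L^2_T(L^2)}\lesssim a_j$.

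Next I would produce the time‑weighted Lorentz norms; the point is that the frequency‑localized Lorentz estimates come for free by real interpolation, $L^{4,1}_T=(L^2_T,L^\infty_T)_{\frac12,1}$, i.e. $\|g\|_{L^{4,1}_T}\lesssim\|g\|_{L^2_T}^{1/2}\|g\|_{L^\infty_T}^{1/2}$, applied to the endpoints already recorded in the lemmas. Thus, by \eqref{est-const-basic-j-1a} ($\|t^{1/2}\mathcal Q_j\|_{L^2_T(L^2)}\lesssim a_j$) and \eqref{est-const-basic-j-1} ($\|t^{1/2}\mathcal Q_j\|_{L^\infty_T(L^2)}\lesssim 2^j a_j$), $\|t^{1/2}\mathcal Q_j\|_{L^{4,1}_T(L^2)}\lesssim 2^{j/2}a_j=d_j\,\|(u_0,B_0)\|_{\dot B^{\frac12}_{2,1}}$; and by \eqref{est-const-basic-j-1a} ($\|t\,\mathcal Q_j\|_{L^2_T(L^6)}\lesssim a_j$) and \eqref{est-const-hL6-j-0} ($\|t\,(\nabla^2u_j,\nabla\Pi_j)\|_{L^\infty_T(L^6)}\lesssim 2^j a_j$), $\|t\,(\nabla^2u_j,\nabla\Pi_j)\|_{L^{4,1}_T(L^6)}\lesssim 2^{j/2}a_j=d_j\,\|(u_0,B_0)\|_{\dot B^{\frac12}_{2,1}}$. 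Since $u_t=\sum_j\partial_tu_j$, $\nabla^2u=\sum_j\nabla^2u_j$ and $\nabla\Pi=\sum_j\nabla\Pi_j$, summing over $j$ (triangle inequality in the normed space $L^{4,1}_T$ together with $\sum_j d_j=1$) yields the two Lorentz terms $\|t^{1/2}(u_t,\nabla^2u,\nabla\Pi)\|_{L^{4,1}_T(L^2)}$ and $\|t(\nabla^2u,\nabla\Pi)\|_{L^{4,1}_T(L^6)}$ of \eqref{est-const-B21-0}.

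The last and most delicate step is the two $L^1_T$ endpoints, and this is where I expect the main obstacle. For $\|(\nabla^2u,\nabla\Pi)\|_{L^1_T(L^3)}$ I would use the Gagliardo--Nirenberg inequality $\|f\|_{L^3(\R^3)}\lesssim\|f\|_{L^2}^{1/2}\|f\|_{L^6}^{1/2}$ and write, with $f=(\nabla^2u,\nabla\Pi)(t)$, $\|f\|_{L^3}\lesssim\bigl(t^{1/2}\|f\|_{L^2}\bigr)^{1/2}\bigl(t\,\|f\|_{L^6}\bigr)^{1/2}\,t^{-3/4}$. By Step 3 both $t^{1/2}\|(\nabla^2u,\nabla\Pi)\|_{L^2}$ and $t\,\|(\nabla^2u,\nabla\Pi)\|_{L^6}$ belong to $L^{4,1}(0,T)$ with norms $\lesssim\|(u_0,B_0)\|_{\dot B^{\frac12}_{2,1}}$, so their square roots lie in $L^{8,2}(0,T)$ (with $\||g|^{1/2}\|_{L^{8,2}}=\|g\|_{L^{4,1}}^{1/2}$), while $t^{-3/4}\in L^{4/3,\infty}(0,T)$ with a norm bounded independently of $T$; since $\frac18+\frac18+\frac34=1$ and $\frac12+\frac12=1$, Hölder's inequality in Lorentz spaces (the appendix toolbox) puts the product in $L^{1,1}(0,T)=L^1(0,T)$ and gives $\|(\nabla^2u,\nabla\Pi)\|_{L^1_T(L^3)}\lesssim\|(u_0,B_0)\|_{\dot B^{\frac12}_{2,1}}$. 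The bound $\|\nabla u\|_{L^1_T(L^\infty)}$ is obtained identically from $\|\nabla u\|_{L^\infty(\R^3)}\lesssim\|\nabla^2u\|_{L^2}^{1/2}\|\nabla^2u\|_{L^6}^{1/2}$ and the same two $L^{4,1}_T$ quantities. The reason this is the hard part is that the $L^1$‑in‑time endpoint is genuinely borderline: the $\dot H^{\frac12}$‑based estimates in \eqref{est-const-basic-0} only give $L^4_T$‑type (not $L^{4,1}_T$‑type) integrability of $t^{1/2}\nabla^2u$, and $L^4_T$ is not integrable against $t^{-3/4}$ uniformly in $T$; it is exactly the upgrade to $L^{4,1}_T$ — which costs the extra assumption $(u_0,B_0)\in\dot B^{\frac12}_{2,1}$ because it turns the dyadic sums into $\ell^1$‑sums — that closes the argument (cf. Remark \ref{rmk-to-thm2-1}). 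Collecting the estimates of these four groups then proves \eqref{est-const-B21-0}.
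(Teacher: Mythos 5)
Your proposal is correct and follows essentially the same route as the paper: the Besov norms are obtained by the same high/low frequency splitting of $\dot\Delta_k u=\sum_j\dot\Delta_k u_j$ against the bounds of Lemmas \ref{S2lem1}, \ref{Prop-const-j-Lip-1} and \ref{prop-Lip1}; the Lorentz norms come from the same real interpolation $L^{4,1}_T=[L^2_T,L^\infty_T]_{\frac12,1}$ applied to the frequency-localized endpoints before summing the $\ell^1$ series; and the two $L^1_T$ endpoints are closed exactly as in \eqref{est-const-B21-4} and \eqref{est-const-B21-16} via $\|f\|_{L^3}\lesssim t^{-3/4}(t^{1/2}\|f\|_{L^2})^{1/2}(t\|f\|_{L^6})^{1/2}$ and O'Neil's inequality with $t^{-3/4}\in L^{4/3,\infty}$. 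Your closing remark correctly identifies the role of the $\dot B^{\frac12}_{2,1}$ hypothesis, which matches Remark \ref{rmk-to-thm2-1}.
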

\begin{proof} We first deduce from \eqref{est-const-basic-j-1a} and \eqref{est-const-hL6-j-0} that
\begin{equation*}\label{est-const-B21-1}
\begin{split}
&\|t(\nabla^2u_j,\nabla\Pi_j)\|_{L^2_T(L^6)}
\lesssim
\|(\dot\Delta_ju_0,\dot\Delta_jB_0)\|_{L^2}
\lesssim
{d}_j2^{-\frac{j}{2}}\|(u_0,B_0)\|_{\dot B^{\frac{1}{2}}_{2,1}},\\
&\|t(\nabla^2u_j,\nabla\Pi_j)\|_{L^\infty_T(L^6)}\lesssim 2^{j}
\|(\dot\Delta_ju_0,\dot\Delta_jB_0)\|_{L^2}
\lesssim
{d}_j2^{\frac{j}{2}}\|(u_0,B_0)\|_{\dot B^{\frac{1}{2}}_{2,1}}.
\end{split}
\end{equation*} Here and below, we always denote $(d_j)_{j\in\Z}$ to be a generic element of $\ell^1(\Z)$ so that $\sum_{j\in\Z}d_j=1.$
Then it follows from the interpolation:  $L^{4, 1}_T(L^6)=[L^2_T(L^6), L^\infty_T(L^6)]_{\frac{1}{2}, 1}$, that
\begin{equation*}\label{est-const-B21-2}
\begin{split}
\|t\bigl(\nabla^2u_j,\nabla\Pi_j\bigr)\|_{L^{4,1}_T(L^6)}
\lesssim d_j
\|(u_0,B_0)\|_{\dot B^{\frac{1}{2}}_{2,1}},
\end{split}
\end{equation*}
which implies
\begin{equation}\label{est-const-B21-3}
\begin{split}
\|t\bigl(\nabla^2u,\nabla\Pi\bigr)\|_{L^{4,1}_T(L^6)} \lesssim \sum_{j\in \mathbb{Z}}\|t\bigl(\nabla^2u_j,\nabla\Pi_j\bigr)\|_{L^{4,1}_T(L^6)}
\lesssim
\|(u_0,B_0)\|_{\dot B^{\frac{1}{2}}_{2,1}}.
\end{split}
\end{equation}

Along the same line, it follows from  Lemma \ref{Prop-const-j-Lip-1} that
\begin{align*}
&\|t^{\frac{1}{2}}(\partial_tu_j,\,\nabla^2\,u_j,\nabla\Pi_j)\|_{L^2_T(L^2)}
\lesssim d_{j}2^{-\frac{j}{2}}\|u_0\|_{\dot{B}^{\frac{1}{2}}_{2, 1}},
\\&
\|t^{\frac{1}{2}}(\partial_tu_j,\,\nabla^2\,u_j,\nabla\Pi_j)\|_{L^\infty_T(L^2)}
\lesssim d_{j}2^{\frac{j}{2}}\|u_0\|_{\dot{B}^{\frac{1}{2}}_{2, 1}},
\end{align*}
which together with the interpolation:
$L^{4, 1}_T(L^2)=[L^2_T(L^2), L^\infty_T(L^2)]_{\frac{1}{2}, 1},$ ensures that
\begin{equation*}\label{est-const-B21-13}
\begin{split}
\|t^{\frac{1}{2}}(\partial_tu_j,\,\nabla^2\,u_j,\nabla\Pi_j)\|_{L^{4, 1}_T(L^2)}
\lesssim
 d_j\|(u_0,B_0)\|_{\dot{B}^{\frac{1}{2}}_{2, 1}},
\end{split}
\end{equation*}
so that one has
\begin{equation}\label{est-const-B21-14}
\begin{split}
\|t^{\frac{1}{2}}(u_t,\,\nabla^2\,u,\nabla\Pi)\|_{L^{4, 1}_T(L^2)}
\lesssim \sum_{j\in \mathbb{Z}}\|t^{\frac{1}{2}}(\partial_tu_j,\,\nabla^2\,u_j,\nabla\Pi_j)\|_{L^{4, 1}_T(L^2)}
\lesssim
 \|(u_0,B_0)\|_{\dot{B}^{\frac{1}{2}}_{2, 1}}.
\end{split}
\end{equation}

Thanks to \eqref{est-const-B21-3} and \eqref{est-const-B21-14}, we deduce from Proposition \ref{Neil} that
\begin{equation}\label{est-const-B21-4}
\begin{split}
\|\nabla u\|_{L^1_T(L^\infty)}
&\lesssim
\int_0^Tt^{-\frac{3}{4}}\|{t}^{\frac{1}{2}}\,\nabla^2u(t)\|_{L^2}^{\frac{1}{2}}
\|t\nabla^2u(t)\|_{L^6}^{\frac{1}{2}}\,dt
\\&
\lesssim
\|t^{-\frac{3}{4}}\|_{L^{\frac{4}{3},\infty}(\R^+)}
\|{t}^{\frac{1}{2}}\,\nabla^2u\|_{L^{4,1}_T(L^2)}^{\frac{1}{2}}
\|t\nabla^2u\|_{L^{4,1}_T(L^6)}^{\frac{1}{2}}
\lesssim
\|(u_0,B_0)\|_{\dot B^{\frac{1}{2}}_{2,1}},
\end{split}
\end{equation}
and
\begin{equation}\label{est-const-B21-16}
\begin{split}
\int_0^T&\|(\nabla^2u,\nabla\Pi)(t)\|_{L^3}\,dt\\
\lesssim&
\int_0^T t^{-\frac{3}{4}}\|t^{\frac{1}{2}}(\nabla^2u,\nabla\Pi)(t)\|_{L^2}^{\frac{1}{2}}
\|t(\nabla^2u,\nabla\Pi)(t)\|_{L^6}^{\frac{1}{2}}\,dt
\\
\lesssim&
\|t^{-\frac{3}{4}}\|_{L^{\frac{4}{3},\infty}(\R^+)}
\|t^{\frac{1}{2}}(\nabla^2\,u,\nabla\Pi)\|_{L^{4,1}_T(L^2)}^{\frac{1}{2}}
\|t(\nabla^2\,u,\nabla\Pi)\|_{L^{4,1}_T(L^6)}^{\frac{1}{2}}
\lesssim
\|(u_0,B_0)\|_{\dot B^{\frac{1}{2}}_{2,1}}.
\end{split}
\end{equation}

While we get, by applying Lemma   \ref{Prop-const-j-Lip-1} and Lemma \ref{lem2.1}, that
\begin{equation*}
\begin{split}
&\|\dot\Delta_qu\|_{L^\infty_T(L^2)}
+\|\nabla\dot\Delta_qu\|_{L^2_T(L^2)}\\
&\lesssim
\sum_{q\le j}\bigl(\|\dot\Delta_qu_j\|_{L^\infty_T(L^2)}
+\|\nabla\dot\Delta_qu_j\|_{L^2_T(L^2)}\bigr)
+2^{-q}\sum_{j\le q}\bigl(\|\nabla\dot\Delta_qu_j\|_{L^\infty_T(L^2)}
+\|\nabla^2\dot\Delta_qu_j\|_{L^2_T(L^2)}\bigr)
\\&
\lesssim
\sum_{q\le j}\bigl(\|u_j\|_{L^\infty_T(L^2)}
+\|\nabla u_j\|_{L^2_T(L^2)}\bigr)
+2^{-q}\sum_{j\le q}\bigl(\|\nabla u_j\|_{L^\infty_T(L^2)}
+\|\nabla^2 u_j\|_{L^2_T(L^2)}\bigr)
\\&
\lesssim
d_q2^{-\frac{q}{2}}\|(u_0,B_0)\|_{\dot B^{\frac{1}{2}}_{2,1}},
\end{split}
\end{equation*}
which implies
\begin{equation}\label{est-const-B21-6}
\begin{split}
\|u\|_{\widetilde L^\infty_T(\dot B^{\frac{1}{2}}_{2,1})}
+\|u\|_{\widetilde L^2_T(\dot B^{\frac{3}{2}}_{2,1})}
\lesssim
\|(u_0,B_0)\|_{\dot B^{\frac{1}{2}}_{2,1}}.
\end{split}
\end{equation}
Similarly we deduce that
\begin{equation}\label{est-const-B21-7}
\begin{split}
\|B\|_{\widetilde L^\infty_T(\dot B^{\frac{1}{2}}_{2,1})}
\lesssim
\|(u_0,B_0)\|_{\dot B^{\frac{1}{2}}_{2,1}}.
\end{split}
\end{equation}

On the other hand, we deduce from \eqref{S2eq1} and \eqref{est-const-basic-j-1},
 that
\begin{equation*}\label{est-const-B21-8}
\begin{split}
\|\dot\Delta_qB\|_{L^2_T(L^6)}
&\lesssim
\sum_{q\le j}\|\dot\Delta_qB_j\|_{L^2_T(L^6)}
+2^{-q}\sum_{j\le q}\|\nabla\dot\Delta_qB_j\|_{L^2_T(L^6)}
\\&
\lesssim
\sum_{q\le j}\|\nabla B_j\|_{L^2_T(L^2)}
+2^{-q}\sum_{j\le q}\|\nabla B_j\|_{L^2_T(L^6)}
\\&
\lesssim
d_q2^{-\frac{q}{2}}\|(u_0,B_0)\|_{\dot B^{\frac{1}{2}}_{2,1}},
\end{split}
\end{equation*}
which ensures that
\begin{equation}\label{est-const-B21-9}
\begin{split}
\|B\|_{\widetilde L^2_T(\dot B^{\frac{1}{2}}_{6,1})}
\lesssim
\|(u_0,B_0)\|_{\dot B^{\frac{1}{2}}_{2,1}}.
\end{split}
\end{equation}

Finally it follows from
Lemmas \ref{Prop-const-j-Lip-1} and  \ref{prop-Lip1}  that
\begin{equation*}\label{est-const-B21-10}
\begin{split}
\|t\nabla\dot\Delta_q D_t{u}\|_{L^2_T(L^2)}
&\lesssim
\sum_{q\le j}\|t\nabla\dot\Delta_q D_t{u}_j\|_{L^2_T(L^2)}
+2^{-q}\sum_{j\le q}\|\nabla^2\dot\Delta_q D_t{u}_j\|_{L^2_T(L^2)}
\\&
\lesssim
d_q2^{-\frac{q}{2}}\|(u_0,B_0)\|_{\dot B^{\frac{1}{2}}_{2,1}},
\end{split}
\end{equation*}
so that one has
\begin{equation}\label{est-const-B21-11}
\begin{split}
\|t D_t{u}\|_{\widetilde L^2_T(\dot B^{\frac{3}{2}}_{2,1})}
\lesssim
\|(u_0,B_0)\|_{\dot B^{\frac{1}{2}}_{2,1}}.
\end{split}
\end{equation}

By summarizing the estimates (\ref{est-const-B21-3}-\ref{est-const-B21-11}), we conclude the proof of \eqref{est-const-B21-0}. This
 completes the proof of  Proposition \ref{prop-B21-lip2}.
\end{proof}

\renewcommand{\theequation}{\thesection.\arabic{equation}}
\setcounter{equation}{0}

\section{Proof of Theorem \ref{thm-GWS-MHD}}\label{Sect3}

This section is devoted to  the  proof of  Theorem \ref{thm-GWS-MHD}.

\begin{proof}[Proof of Theorem \ref{thm-GWS-MHD}]
By mollifying the initial data $(\rho_0, u_0)$ to be $(\rho_{0 \epsilon }, u_{0 \epsilon },  B_{0 \epsilon })$ , we get, by using
modifications of the
  classical well-posedness theory of inhomogeneous incompressible Navier-Stokes system (see \cite{AZ2015-1}) that  the system
  \eqref{1.2} has a unique local solution $(\rho_{\epsilon}, u_{\epsilon}, B_{\epsilon})$ on $[0, T^{\ast}_{\epsilon})$ for some positive lifespan $T^\ast_{\epsilon}.$ If the constants $\mathfrak{c}$ and $\varepsilon_0$ are sufficiently small in \eqref{small-data-1}, we deduce from Proposition
  \ref{prop-vaviable-1} that  $(\rho_{\epsilon}, u_{\epsilon}, B_{\epsilon})$ verify the estimates \eqref{bdd-density-visc-1}  and \eqref{est-variable-2} for any $T<T^\ast_{\epsilon}.$ Then a standard continuous argument shows  that $T_{\epsilon}^\ast=+\infty$.
In particular, we have $(u_{\epsilon}, B_{\epsilon}) \in (C([0, +\infty); \dot{H}^{\frac{1}{2}})\cap L^4(\mathbb{R}^+; \dot{H}^{1}))^2$, and for any $T \in [0, +\infty]$,  $(u_{\epsilon}, B_{\epsilon})$ satisfy  the inequality \eqref{est-variable-2}.
Then we  get, by using a  compactness argument  similar to that in \cite{LP1996}, that there exists  $\rho \in C_{\rm w}([0,\infty); L^{\infty})$
so that
\beq \label{S3eq1}
\begin{split}
&\rho_{\epsilon} \rightharpoonup \rho \quad \mbox{weak $\ast$ in} \ \ L^\infty(\R^+\times\R^3) \andf\\
&\rho_{\epsilon} \to \rho \quad \mbox{strongly in} \ \ L^r_{\mbox{loc}}(\R^+\times\R^3)\ \ \mbox{for any} \ r<\infty.
\end{split}
\eeq
Notice that for any $T<\infty,$  it follows from Proposition \ref{Neil} that
\beno
\|(\p_tu_{\epsilon},\p_tB_{\epsilon})\|_{L^{\f43,\infty}_T(L^2)}\lesssim \|t^{-\f14}\|_{L^{4,\infty}_T}\|(\p_tu_{\epsilon},\p_tB_{\epsilon})\|_{L^{2}_T(L^2)},
\eeno
which together with \eqref{est-variable-2} ensures that
\beno
\|(\p_tu_{\epsilon},\p_tB_{\epsilon})\|_{L^{\f43,\infty}_T(L^2)}+\|(\na u_{\epsilon}, \na B_{\epsilon})\|_{L^2_T(L^3)}\lesssim
\|(u_0,B_0)\|_{\dot H^{\f12}}.
\eeno
Then we deduce from Ascoli-Arzela Theorem that there exist $(u, B) \in \bigl(L^{\infty}([0, +\infty); \dot{H}^{\frac{1}{2}})\cap L^4(\mathbb{R}^+; \dot{H}^{1})\cap L^2(\mathbb{R}^+; \dot{W}^{1,3})\bigr)^2$ so that
\beq \label{S3eq2}
\begin{split}
&(u_{\epsilon},B_{\epsilon}) \rightharpoonup  (u,B) \quad\mbox{weakly in} \  L^4(\mathbb{R}^+; \dot{H}^{1})\andf\\
&(u_{\epsilon},B_{\epsilon})  \to (u,B) \quad\mbox{strongly in} \ \ L^2_{\mbox{loc}}(\R^+;\ L^r_{\mbox{loc}}(\R^3))\ \ \mbox{for any} \ r<\infty.
\end{split}
\eeq

Thanks to \eqref{S3eq1} and \eqref{S3eq2}, we conclude that $(\rho, u, B)$ thus obtained is indeed a global weak solution of \eqref{1.2}.
 Furthermore, it follows from \eqref{est-variable-2} and Fatou's Lemma that $(\rho, u, \nabla\Pi,  B)$ satisfies the estimates \eqref{bdd-density-visc-1} and \eqref{est-variable-2} for any $T \in [0, +\infty]$.

Finally let us prove  $(u, B)\in (C([0,\infty); {\dot{H}^{\frac{1}{2}}}))^2.$  Indeed it follows from  \eqref{est-variable-2} that $$\|u\|_{\widetilde{L}^{\infty}(\mathbb{R}^+; \dot{H}^{\frac{1}{2}})}\leq C\|(u_0, B_0)\|_{\dot{H}^{\frac{1}{2}}}.$$ Then for any $\varepsilon>0$, there is a positive $j_0=j_0(\varepsilon)\in \mathbb{N}$ so that
 \begin{equation*}\label{1.3aa}
\begin{split}
4\sum_{|j| \geq j_0}2^{j}\|\dot{\Delta}_ju\|_{L^{\infty}(\R^+; L^2)}^2<\varepsilon.
\end{split}
\end{equation*}
Then for any  $t \in [0, +\infty),\,h>0$, we have
\begin{equation*}\label{1.4-1}
\begin{split}
\|u(t+h)-u(t)\|_{\dot{H}^{\frac{1}{2}}}^2&=\sum_{j \in \mathbb{Z}}2^{ j}\|\dot{\Delta}_j(u(t+h)-u(t))\|_{L^2}^2\\
&\leq \sum_{|j| \leq j_0-1}2^{j}\|\dot{\Delta}_j(u(t+h)-u(t))\|_{L^2}^2+4\sum_{|j| \geq j_0}2^{j}\|\dot{\Delta}_ju\|_{L^{\infty}_T(L^2)}^2\\
&\leq 2^{j_0} \|u(t+h)-u(t)\|_{L^2}^2+\varepsilon,
\end{split}
\end{equation*}
from which, we infer
\begin{equation*}\label{1.4-2}
\begin{split}
\|u(t+h)-u(t)\|_{\dot{H}^{\frac{1}{2}}}^2&\leq 2^{j_0} \bigl\|\int_t^{t+h}\tau^{-\frac{1}{4}}\, \tau^{\frac{1}{4}}u_{\tau}(\tau)\,d\tau\bigr\|_{L^2}^2+\varepsilon\\
&\leq 2^{j_0}  \|\tau^{-\frac{1}{4}}\|_{L^2([t, t+h])}^2\|\tau^{\frac{1}{4}}u_{\tau} \|_{L^2_{[t, t+h]}(L^2)}^2 +\varepsilon\\
&\leq  C\, 2^{j_0+2}\|(u_0, B_0)\|_{\dot{H}^{\frac{1}{2}}}\,h^{\frac{1}{2}} +\varepsilon,
\end{split}
\end{equation*}
where we used the fact: $\|t^{\frac{1}{4}}u_t\|_{L^2(\R^+; L^2)} \leq C \|(u_0, B_0)\|_{\dot{H}^{\frac{1}{2}}}$ in \eqref{est-variable-2}.
This shows that $u\in C([0,\infty);$ $ \dot{H}^{\frac{1}{2}})$. Along the same line, we can verify that $B\in C([0,\infty);$ $ \dot{H}^{\frac{1}{2}})$.
This completes the proof of Theorem \ref{thm-GWS-MHD}.
\end{proof}

\renewcommand{\theequation}{\thesection.\arabic{equation}}
\setcounter{equation}{0}

\section{Proof of Theorem \ref{mainthm-GWP}}\label{Sect4}

The goal of the this section is to present the proof of Theorem \ref{mainthm-GWP}. Indeed with
Propositions \ref{prop-const-priori-1} and \ref{prop-B21-lip2},  the existence part of Theorem \ref{mainthm-GWP}
follows exactly
along the same line to that of Theorem \ref{thm-GWS-MHD}. In particular, the system \eqref{1.2} with
$\mu(\rho)=1$ has a global solution $(\rho, \, u,\, B, \,\nabla\Pi)$ with $\rho\in C_{\rm w}([0,\infty); L^{\infty})$ and $(u, B) \in (C([0, +\infty); \dot{H}^{\frac{1}{2}})\cap L^4(\mathbb{R}^+; \dot{H}^{1}))^2,$ which satisfy \eqref{bdd-density-visc-1}, \eqref{est-const-basic-0} and \eqref{est-const-B21-0}.  Below let us focus on the uniqueness part of Theorem \ref{mainthm-GWP}, which
we shall use the Lagrangian approach as that in \cite{DM1}. Let $(\rho, \, u,\, B, \,\nabla\Pi)$ be the global solution of the system \eqref{1.2} obtained above.
Due to $\nabla{u}\in L^1_{loc}(\mathbb{R}^+; L^{\infty})$, we can define $\eta$ the  position of the fluid particle $x$ in $\mathbb{R}^3$ at time $t\in \mathbb{R}^+$ through
\begin{equation}\label{def-flowmap-1}
\begin{cases}
&\frac{d}{dt}\eta(t, x)=u(t, \eta(t, x)), \quad \forall\,(t, x) \in \mathbb{R}^+\times \mathbb{R}^3,\\
&\eta|_{t=0}=x, \quad  \forall\, x\in \mathbb{R}^3,
\end{cases}
\end{equation}
then the displacement $\xi(t, x)\eqdefa \eta(t, x)-x$ satisfies
\begin{equation}\label{def-flowmap-2}
\begin{cases}
&\frac{d}{dt}\xi(t, x)=u(t, x+\xi(t, x)), \quad \forall\,(t, x) \in \mathbb{R}^+\times \mathbb{R}^3,\\
&\xi|_{t=0}=0.
\end{cases}
\end{equation}
We define Lagrangian quantities as follows:
\begin{equation*} \label{S4eq1}
\begin{split}
&v(t, x)\eqdefa u(t, \eta(t, x)),\, q(t, x)\eqdefa \Pi(t, \eta(t, x)), \, b(t, x)\eqdefa B(t, \eta(t, x)),\\
&\mathfrak{J}(t, x)\eqdefa J(t, \eta(t, x)),\, \mathfrak{f}(t, x)\eqdefa \rho(t, \eta(t, x)),
\end{split}
\end{equation*}
where $J\eqdefa\curl{B}$ denotes the current density.

Let $D\eta$ be  Jacobian matrix of the flow map $\eta$
\begin{equation*}
\begin{split}
 &D\eta\eqdefa \left(
\begin{array}{lll}
 1+\partial_1\xi^1 &  \partial_2\xi^1& \partial_3\xi^1\\
 \partial_1\xi^2&1+\partial_2\xi^2  &\partial_3 \xi^2\\
  \partial_1\xi^3 &\partial_2\xi^3& 1+\partial_3\xi^3 \\
\end{array}\right) \andf \mathcal{A} \eqdefa  (D\eta)^{-T}.
\end{split}
\end{equation*}
Thanks to $\nabla\cdot{u}=0$ and $\partial_t\rho+u\cdot\nabla\rho=0$, we get $\nabla_{\mathcal{A}}\cdot v=0$, $\partial_t \mbox{det}(D\eta)=0$ and $\partial_t  \mathfrak{f} =0$, which implies that $\mbox{det}(D\eta)\equiv 1$ and $ \mathfrak{f}(t, x)\equiv\rho_0(x),$ and there holds
\begin{equation}\label{expre-a-1}
  \begin{split}
  & \mathcal{A}_{11}=(1+\partial_2\xi^2)(1+\partial_3\xi^3)-\partial_2\xi^3  \partial_3\xi^2,\, \ \mathcal{A}_{12}=-(\partial_1\xi^2+\partial_1\xi^2\partial_3\xi^3-\partial_1\xi^3  \partial_3\xi^2),\\
&\mathcal{A}_{13}=-(\partial_1\xi^3+\partial_1\xi^3\partial_2\xi^2-\partial_1\xi^2  \partial_2\xi^3),\, \ \mathcal{A}_{21}=-(\partial_2\xi^1+\partial_2\xi^1\partial_3\xi^3-\partial_2\xi^3  \partial_3\xi^1),\\
&\mathcal{A}_{22}=(1+\partial_1\xi^1)(1+\partial_3\xi^3)-\partial_1\xi^3  \partial_3\xi^1,\, \ \mathcal{A}_{23}=-(\partial_2\xi^3+\partial_2\xi^3\partial_1\xi^1-\partial_1\xi^3  \partial_2\xi^1),\\
&\mathcal{A}_{31}=-(\partial_3\xi^1+\partial_3\xi^1\partial_2\xi^2-\partial_2\xi^1  \partial_3\xi^2),\, \ \mathcal{A}_{32}=-(\partial_3\xi^2+\partial_3\xi^2\partial_1\xi^1-\partial_1\xi^2  \partial_3\xi^1),\\
&\mathcal{A}_{33}=(1+\partial_1\xi^1)(1+\partial_2\xi^2)-\partial_1\xi^2  \partial_2\xi^1.
  \end{split}
\end{equation}

It follows from \eqref{def-flowmap-1} and \eqref{def-flowmap-2} that
\begin{equation*}\label{flow-map-identity-1}
\mathcal{A}_{i}^k \partial_{k} \eta^j=\mathcal{A}_{k}^j \partial_{i} \eta^k=\delta_i^j,\quad\partial_{i} \eta^j=\delta_i^j+\partial_{i} \xi^j, \quad \mathcal{A}_{i}^j=\delta_i^j-\mathcal{A}_{i}^k \partial_k\xi^j.
\end{equation*}
Since $\mathcal{A}(D\eta)^T=\mathbb{I}$, by differentiating it with respect to $t$ and $x,$ one has
\begin{equation}\label{identity-Lagrangian-1}
\begin{split}
&\partial_t \mathcal{A}_{i}^j=-\mathcal{A}_{k}^j\mathcal{A}_{i}^{m} \partial_{m}v^k,\quad \partial_{\ell} \mathcal{A}_{i}^j=-\mathcal{A}_{k}^j\mathcal{A}_{i}^{m} \partial_{m}\partial_{\ell}\xi^k,
\end{split}
\end{equation}
where we used the fact $\partial_t\eta=v$ in the first equation in \eqref{identity-Lagrangian-1}.

Moreover, it is easy to verify the following Piola identity:
\begin{equation*}\label{identity-Piola}
\begin{split}
&\partial_j (\mbox{det}(D\eta) \mathcal{A}_{i}^j) =0 \quad \forall \,i = 1, 2, 3.
\end{split}
\end{equation*}
Here and in what follows, the subscript notation for vectors and tensors as well as the Einstein summation convention has been adopted unless otherwise specified.

In the  Lagrangian coordinates, we may introduce the differential operators with
their actions given by $(\nabla_{\mathcal{A}}f)_i=\mathcal{A}_i^j  \partial_jf$, $ \mathbb{D}_{\mathcal{A}} (v)=\nabla_{\mathcal{A}} v+(\nabla_{\mathcal{A}} v)^T$, $\Delta_{\mathcal{A}} f=\nabla_{\mathcal{A}}\cdot \nabla_{\mathcal{A}} f$, so that in
the Lagrangian coordinates,  the system \eqref{1.2} reads
\begin{equation}\label{Lagrangian-MHD-1}
\begin{cases}
  & \rho_0\partial_t{v} + \nabla_{\mathcal{A}}{q}-\grad_{\mathcal{A}} \cdot \nabla_{\mathcal{A}}{v}=b\cdot \nabla_{\mathcal{A}}{b}\quad \mbox{in}\quad  \mathbb{R}^+\times \mathbb{R}^3,\\
    &\partial_t{b}+\nabla_{\mathcal{A}}\wedge (\sigma(\rho_0)\nabla_{\mathcal{A}}\wedge b)=b\cdot \nabla_{\mathcal{A}} v,\\
    & \nabla_{\mathcal{A}} \cdot v=\nabla_{\mathcal{A}} \cdot b=0,\\
    &(v, b)|_{t=0}=(u_0, B_0).
     \end{cases}
\end{equation}

\begin{rmk}
\label{rmk-Lagr-form-1}
It follows from  $(D\eta)^T\mathcal{A}=\mathbb{I}$ that if $\nabla_{\mathcal{A}} g=f$,  there hold
\begin{equation*}
\begin{split}
 &\nabla{g}=(D\eta)^Tf,\quad \nabla_{\mathfrak{B}}g= \mathfrak{B}\nabla{g}=\mathfrak{B}(D\eta)^Tf.
\end{split}
\end{equation*}
\end{rmk}
\begin{lem}\label{lem-lagr-quant-est-1}
Let $u(t, x)$ be a solenoidal vector field so that $\int_0^T\|\nabla{u}\|_{L^{\infty}}\,dt\leq \frac{1}{16}$ for some $T>0$,
let the flow map $\eta$ and the displacement $\xi$ be defined respectively by \eqref{def-flowmap-1} and \eqref{def-flowmap-2}. Then
 for any Euler quantity $h(t, x)$ and  its corresponding  Lagrangian quantity $\widetilde{h}(t, x)=h(t, \eta(t, x))$ ($\forall\,(t, x) \in \mathbb{R}^+\times \mathbb{R}^3$), and for all  $p,\, q \in [1, +\infty]$, there are two positive constants $C_1$ and $C_2$ such that
\begin{equation}\label{est-Lagr-Euler-1}
\begin{split}
&\|{\widetilde{h}}\|_{L^q_T(L^p)}=\|{h}\|_{L^q_T(L^p)},\quad \|\partial_t{\widetilde{h}}\|_{L^q_T(L^p)}=\|D_t{h}\|_{L^q_T(L^p)},\\
& \|\mathcal{A}-\mathbb{I}\|_{L^{\infty}_T(L^{\infty})} \leq \frac{20}{3}\int_0^T\|\nabla{u}\|_{L^{\infty}}\,dt,\quad \|\nabla^2 v\|_{L^1_T(L^3)} \lesssim \|\nabla^2u\|_{L^1_T(L^3)},\\
&C_1\|\nabla{\widetilde{h}}\|_{L^q_T(L^p)}\leq \|\nabla_{\mathcal{A}}{\widetilde{h}}\|_{L^q_T(L^p)}\leq C_2\|\nabla{\widetilde{h}}\|_{L^q_T(L^p)},\\
&\|\nabla{\widetilde{h}}\|_{L^q_T(L^p)}\lesssim \|\nabla{h}\|_{L^q_T(L^p)},\quad\|\nabla\partial_t{\widetilde{h}}\|_{L^q_T(L^p)}\lesssim \|\nabla{D}_th\|_{L^q_T(L^p)},\\
&\|\nabla \wedge (\sigma(\rho_0)\,\nabla_{\mathcal{A}} \wedge \widetilde{h})\|_{L^q_T(L^p)}\lesssim \|\|\nabla \wedge (\sigma(\rho)\,\nabla \wedge h)\|_{L^q_T(L^p)}.
\end{split}
\end{equation}
\end{lem}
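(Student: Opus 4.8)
\textbf{Proof proposal for Lemma \ref{lem-lagr-quant-est-1}.}
The plan is to exploit that $\eta(t,\cdot)$ is, for each $t$, a volume-preserving diffeomorphism of $\R^3$ (since $\det D\eta\equiv 1$), so that the change of variables $x\mapsto\eta(t,x)$ is an isometry on every $L^p(\R^3)$, uniformly in $t$. The first two identities in \eqref{est-Lagr-Euler-1} are then immediate: $\|\widetilde h(t)\|_{L^p}=\|h(t)\|_{L^p}$ by the unit Jacobian, whence the $L^q_T(L^p)$ norms coincide; and $\partial_t\widetilde h(t,x)=(\partial_t h)(t,\eta)+u(t,\eta)\cdot(\nabla h)(t,\eta)=(D_th)(t,\eta)$ by the chain rule and \eqref{def-flowmap-1}, so the same change of variables gives $\|\partial_t\widetilde h\|_{L^q_T(L^p)}=\|D_th\|_{L^q_T(L^p)}$.

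Next I would control $\xi$ and $D\eta$. From \eqref{def-flowmap-2} one has $\|\nabla\xi(t)\|_{L^\infty}\le\int_0^t\|\nabla u(s)\|_{L^\infty}\,(1+\|\nabla\xi(s)\|_{L^\infty})\,ds$, so Gronwall together with the smallness hypothesis $\int_0^T\|\nabla u\|_{L^\infty}\,dt\le\frac1{16}$ yields $\|\nabla\xi\|_{L^\infty_T(L^\infty)}\le e^{1/16}-1\le\frac15$ (say), hence $D\eta$ and $\mathcal A=(D\eta)^{-T}$ are within a small neighbourhood of $\mathbb I$. Writing $\mathcal A-\mathbb I=-\mathcal A(D\eta-\mathbb I)^T$ and using the Neumann series for $\mathcal A=(\mathbb I+\nabla\xi^T)^{-1}$ gives the bound $\|\mathcal A-\mathbb I\|_{L^\infty_T(L^\infty)}\le\frac{\|\nabla\xi\|_{L^\infty_T(L^\infty)}}{1-\|\nabla\xi\|_{L^\infty_T(L^\infty)}}\le\frac{20}{3}\int_0^T\|\nabla u\|_{L^\infty}\,dt$, using $\frac1{1-r}\le\frac{20}{3}$ for $r\le\frac{17}{20}$ which is guaranteed by the hypothesis, plus $\|\nabla\xi\|_{L^\infty}\le\int_0^T\|\nabla u\|_{L^\infty}\,dt\,(1+\|\nabla\xi\|_{L^\infty})$. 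The comparison $C_1\|\nabla\widetilde h\|\le\|\nabla_{\mathcal A}\widetilde h\|\le C_2\|\nabla\widetilde h\|$ then follows because $\nabla_{\mathcal A}\widetilde h=\mathcal A\nabla\widetilde h$ with $\mathcal A$ and $\mathcal A^{-1}=(D\eta)^T$ both bounded in $L^\infty_T(L^\infty)$; the constants $C_1,C_2$ depend only on the fixed bound $\frac1{16}$.

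For the estimates that pass derivatives between Eulerian and Lagrangian variables I would use the chain rule $\nabla\widetilde h(t,x)=(D\eta(t,x))^T(\nabla h)(t,\eta(t,x))$, so $\|\nabla\widetilde h(t)\|_{L^p}\le\|D\eta(t)\|_{L^\infty}\|(\nabla h)(t,\eta(t,\cdot))\|_{L^p}=\|D\eta(t)\|_{L^\infty}\|\nabla h(t)\|_{L^p}$ by the unit Jacobian, and $\|D\eta\|_{L^\infty_T(L^\infty)}\le 1+\frac15$; this gives $\|\nabla\widetilde h\|_{L^q_T(L^p)}\lesssim\|\nabla h\|_{L^q_T(L^p)}$. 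The bound for $\nabla\partial_t\widetilde h$ follows by the same argument applied to $\partial_t\widetilde h$ in place of $\widetilde h$, combined with the identity $\partial_t\widetilde h=\widetilde{D_th}$ already proved, and noting $[D_t;\nabla]$ produces a $\nabla u\cdot\nabla h$ term that is absorbed (one keeps track that the constant depends on $\int_0^T\|\nabla u\|_{L^\infty}$). For $\|\nabla^2 v\|_{L^1_T(L^3)}\lesssim\|\nabla^2 u\|_{L^1_T(L^3)}$ one differentiates $\nabla v=(D\eta)^T(\nabla u)(t,\eta)$ once more, producing terms $D^2\eta\cdot(\nabla u)(t,\eta)$ and $(D\eta)^T(D\eta)^T(\nabla^2 u)(t,\eta)$; the second is controlled directly, while $D^2\eta=D^2\xi$ must itself be estimated in $L^1_T(L^3)$ via the equation $\partial_t D^2\xi=D^2(u(t,x+\xi))$, which after expanding and using $\|\nabla\xi\|_{L^\infty_T(L^\infty)}$ small plus Gronwall (in the mixed norm) gives $\|\nabla^2\xi\|_{L^\infty_T(L^3)}\lesssim\|\nabla^2 u\|_{L^1_T(L^3)}$. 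Finally the curl estimate $\|\nabla\wedge(\sigma(\rho_0)\nabla_{\mathcal A}\wedge\widetilde h)\|_{L^q_T(L^p)}\lesssim\|\nabla\wedge(\sigma(\rho)\nabla\wedge h)\|_{L^q_T(L^p)}$ is obtained by expressing the Lagrangian curl operators in terms of $\mathcal A$, using $\sigma(\rho_0)(x)=\sigma(\rho)(t,\eta(t,x))$ (from $\mathfrak f\equiv\rho_0$ together with the transport of $\sigma(\rho)$), writing the difference between $\nabla\wedge(\sigma(\rho_0)\nabla_{\mathcal A}\wedge\widetilde h)$ and $\widetilde{\nabla\wedge(\sigma(\rho)\nabla\wedge h)}$ as terms each carrying at least one factor of $(\mathcal A-\mathbb I)$ or $D^2\xi$, all small or controlled as above, and then invoking the isometry property once more. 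The main obstacle is the second-order estimate $\|\nabla^2\xi\|$ (equivalently $\|\nabla^2 v\|_{L^1_T(L^3)}$): unlike the first-order bound it is not closed purely by $\int_0^T\|\nabla u\|_{L^\infty}$ being small, and one must carefully set up the Gronwall argument in the $L^\infty_T(L^3)$ norm for $\nabla^2\xi$, feeding in $\|\nabla^2 u\|_{L^1_T(L^3)}$ and using the smallness of $\|\nabla\xi\|_{L^\infty_T(L^\infty)}$ to absorb the quadratic terms $\nabla\xi\cdot\nabla^2 u(t,\eta)$ and $\nabla^2\xi\cdot\nabla u(t,\eta)$.
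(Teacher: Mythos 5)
Your proposal is correct and follows the same overall architecture as the paper: the unit Jacobian of $\eta$ makes the change of variables an $L^p$ isometry, a continuity/bootstrap argument keeps $\|\nabla\xi\|_{L^\infty_T(L^\infty)}$ small, and all the remaining bounds are perturbations of $\mathcal{A}$ around $\mathbb{I}$. Two sub-steps are handled by slightly different mechanics, both valid. For $\|\mathcal{A}-\mathbb{I}\|_{L^\infty_T(L^\infty)}$ you invert $(\mathbb{I}+\nabla\xi^T)$ by a Neumann series, while the paper reads the bound $2\|\nabla\xi\|(1+\|\nabla\xi\|)$ directly off the explicit cofactor formula \eqref{expre-a-1} (available because $\det D\eta\equiv1$); both give the constant $\frac{20}{3}$ after combining with $\int_0^T\|\nabla v\|_{L^\infty}\,dt\le\frac{8}{3}\int_0^T\|\nabla u\|_{L^\infty}\,dt$. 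For the second-order estimate $\|\nabla^2v\|_{L^1_T(L^3)}\lesssim\|\nabla^2u\|_{L^1_T(L^3)}$, which you rightly flag as the main obstacle, you run a Gronwall argument on $\|\nabla^2\xi\|_{L^\infty_T(L^3)}$ directly against $\|\nabla^2u\|_{L^1_T(L^3)}$; the paper instead uses $\|\nabla^2\xi\|_{L^\infty_T(L^3)}\le\|\nabla^2v\|_{L^1_T(L^3)}$ to bound $\|\nabla\mathcal{A}\|_{L^\infty_T(L^3)}\le3\|\nabla^2v\|_{L^1_T(L^3)}$ and then absorbs the resulting $\frac{11}{12}\|\nabla^2v\|_{L^1_T(L^3)}$ into the left-hand side, exploiting that $\|\nabla v\|_{L^1_T(L^\infty)}\le\frac16$. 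The absorption route avoids a Gronwall exponential but needs the numerical smallness; your Gronwall route is equally legitimate and arguably more robust. The curl estimate is treated in both arguments by the same device (replace $\nabla$ by $\nabla_{\mathcal{A}}$ up to errors carrying a factor $\mathcal{A}-\mathbb{I}$, then use the exact change-of-variables identity together with $\sigma(\rho_0)=\sigma(\rho)\circ\eta$), so no substantive difference there.
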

\begin{proof}
 Due to $\det (D\eta)=1$, we get, by using changes of variables, that
 \begin{equation*}\label{est-Lagr-Euler-2}
\begin{split}
&\int_{\mathbb{R}^3}|\widetilde{h}(t, x)|^p\,dx=\int_{\mathbb{R}^3}|h(t, \eta(t, x))|^p\,dx=\int_{\mathbb{R}^3}|h(t, x)|^p\,dx,
\end{split}
\end{equation*}
which gives
 \begin{equation*}\label{est-Lagr-Euler-3}
\begin{split}
&\|{\widetilde{h}}\|_{L^q_T(L^p)}=\|{h}\|_{L^q_T(L^p)}.
\end{split}
\end{equation*}
Along the same line, one has
 \begin{equation*}\label{est-Lagr-Euler-4}
\begin{split}
&\|\partial_t{\widetilde{h}}\|_{L^q_T(L^p)}=\|D_t{h}\|_{L^q_T(L^p)},\quad\|\nabla_{\mathcal{A}}{\widetilde{h}}\|_{L^q_T(L^p)}=\|\nabla{h}\|_{L^q_T(L^p)},\\
&\|\nabla_{\mathcal{A}}\partial_t{\widetilde{h}}\|_{L^q_T(L^p)}\lesssim \|\nabla{D}_th\|_{L^q_T(L^p)},\\
&\|\nabla_{\mathcal{A}} \wedge (\sigma(\rho_0)\,\nabla_{\mathcal{A}} \wedge \widetilde{h})\|_{L^q_T(L^p)}=\|\nabla \wedge (\sigma(\rho)\,\nabla \wedge h)\|_{L^q_T(L^p)}.
\end{split}
\end{equation*}
Due to \eqref{expre-a-1}, we have
 \begin{equation}\label{est-Lagr-Euler-5}
\begin{split}
&\|\mathcal{A}-\mathbb{I}\|_{L^{\infty}_T(L^{\infty})} \leq 2\|\nabla\xi(t)\|_{L^{\infty}_T(L^{\infty})}(1+\|\nabla\xi(t)\|_{L^{\infty}_T(L^{\infty})}),
\end{split}
\end{equation}
which along with $\int_0^T\|\nabla_{\mathcal{A}} v(t)\|_{L^{\infty}}\,dt=\int_0^T\|\nabla{u}(t)\|_{L^{\infty}}\,dt$ ensures that
 \begin{equation*}\label{est-Lagr-Euler-6}
\begin{split}\int_0^T\|\nabla v(t)\|_{L^{\infty}}\,dt&\leq \int_0^T\|\nabla_{\mathcal{A}} v(t)\|_{L^{\infty}}\,dt+\|\mathcal{A}-\mathbb{I}\|_{L^{\infty}_T(L^{\infty})}\int_0^t \|\nabla v(t)\|_{L^{\infty}}\,dt\\
&\leq \int_0^T\|\nabla{u}(t)\|_{L^{\infty}}\,dt+2\|\nabla\xi\|_{L^{\infty}_T(L^{\infty})}\bigl(1+\|\nabla\xi\|_{L^{\infty}_T(L^{\infty})}\bigr)\int_0^t \|\nabla v(t)\|_{L^{\infty}}\,dt.
\end{split}
\end{equation*}
We assume by the classical continuous argument that
 \begin{equation}\label{est-Lagr-Euler-7}
\begin{split}
&\|\nabla\xi\|_{L^{\infty}_T(L^{\infty})}\leq \frac{1}{4},
\end{split}
\end{equation}
then we obtain
 \begin{equation*}\label{est-Lagr-Euler-8}
\begin{split}
&\int_0^T\|\nabla v(t)\|_{L^{\infty}}\,dt \leq \frac{8}{3}\int_0^T\|\nabla{u}(t)\|_{L^{\infty}}\,dt\leq \frac{1}{6},
\end{split}
\end{equation*}
which together with the fact: $\|\nabla\xi\|_{L^{\infty}_T(L^{\infty})}\leq \int_0^T\|\nabla v(t)\|_{L^{\infty}}\,dt,$ ensures that
  \begin{equation*}\label{est-Lagr-Euler-9}
\begin{split}
&\|\nabla\xi(t)\|_{L^{\infty}_T(L^{\infty})}\leq \frac{1}{6}
\end{split}
\end{equation*}
and then \eqref{est-Lagr-Euler-7} holds.

Therefore thanks to \eqref{est-Lagr-Euler-7} and \eqref{est-Lagr-Euler-5}, we obtain
 \begin{equation}\label{est-Lagr-Euler-10}
\begin{split}
&\|\nabla\xi(t)\|_{L^{\infty}_T(L^{\infty})}\leq \frac{1}{4},\quad \int_0^T\|\nabla v(t)\|_{L^{\infty}}\,dt \leq \frac{1}{6},\\ &\|\mathcal{A}-\mathbb{I}\|_{L^{\infty}_T(L^{\infty})} \leq \frac{20}{3}\int_0^T\|\nabla{u}\|_{L^{\infty}}\,dt\leq \frac{5}{12}.
\end{split}
\end{equation}

Notice that
 \begin{equation*}\label{est-Lagr-Euler-11}
\begin{split}
&\|\nabla_{\mathcal{A}}{\widetilde{h}}\|_{L^q_T(L^p)}=\|\nabla{\widetilde{h}}+\nabla_{\mathcal{A}-\mathbb{I}}{\widetilde{h}}\|_{L^q_T(L^p)},
\end{split}
\end{equation*}
which implies that
 \begin{equation*}\label{est-Lagr-Euler-12}
\begin{split}
&\|\nabla{\widetilde{h}}\|_{L^q_T(L^p)}-\|\mathcal{A}-\mathbb{I}\|_{L^{\infty}_T(L^{\infty})}\|\nabla{\widetilde{h}}\|_{L^q_T(L^p)}\\
&\qquad\leq \|\nabla_{\mathcal{A}}{\widetilde{h}}\|_{L^q_T(L^p)}\leq \|\nabla{\widetilde{h}}\|_{L^q_T(L^p)}+\|\mathcal{A}-\mathbb{I}\|_{L^{\infty}_T(L^{\infty})}\|\nabla{\widetilde{h}}\|_{L^q_T(L^p)}.
\end{split}
\end{equation*}
We thus deduce from \eqref{est-Lagr-Euler-10} that
 \begin{equation*}\label{est-Lagr-Euler-13}
\begin{split}
C_1\|\nabla{\widetilde{h}}\|_{L^q_T(L^p)}\leq \|\nabla_{\mathcal{A}}{\widetilde{h}}\|_{L^q_T(L^p)}\leq C_2\|\nabla{\widetilde{h}}\|_{L^q_T(L^p)},
\end{split}
\end{equation*}
so that there hold
\begin{equation*}\label{est-Lagr-Euler-14}
\begin{split}
&\|\nabla\partial_t{\widetilde{h}}\|_{L^q_T(L^p)}\lesssim \|\nabla{D}_th\|_{L^q_T(L^p)},\\
&\|\nabla \wedge (\sigma(\rho_0)\,\nabla_{\mathcal{A}} \wedge \widetilde{h})\|_{L^q_T(L^p)}\lesssim \|\nabla \wedge (\sigma(\rho)\,\nabla \wedge h)\|_{L^q_T(L^p)}.
\end{split}
\end{equation*}

Finally, due to $\nabla^2 v=\nabla\nabla_{\mathcal{A}} v+\nabla\nabla_{\mathbb{I}-\mathcal{A}} v$, we have
\begin{equation}\label{est-Lagr-Euler-15}
\begin{split}
\|\nabla^2 v\|_{L^1_T(L^3)}
\leq &\|\nabla\nabla_{\mathcal{A}} v\|_{L^1_T(L^3)}+\|\mathbb{I}-\mathcal{A}\|_{L^{\infty}_T(L^{\infty})} \|\nabla^2v\|_{L^1_T(L^3)}\\
&+\|\nabla \mathcal{A}\|_{L^{\infty}_T(L^3)}\|\nabla{v}\|_{L^1_T(L^{\infty})}.
\end{split}
\end{equation}
While it follows from  \eqref{expre-a-1} and \eqref{est-Lagr-Euler-10} that
\begin{equation*}\label{est-Lagr-Euler-16}
\begin{split}
&\|\nabla \mathcal{A}\|_{L^{\infty}_T(L^3)}\leq  \|\nabla^2\xi\|_{L^{\infty}_T(L^3)} \bigl(2+4\|\nabla\xi\|_{L^{\infty}_T(L^{\infty})}\bigr)\leq  3\|\nabla^2 v\|_{L^1_T(L^3)}
\end{split}
\end{equation*}
which along with \eqref{est-Lagr-Euler-15}  and \eqref{est-Lagr-Euler-10} ensures that
\begin{equation*}\label{est-Lagr-Euler-17}
\begin{split}
&\|\nabla^2 v\|_{L^1_T(L^3)}\leq \|\nabla\nabla_{\mathcal{A}} v\|_{L^1_T(L^3)}+\frac{11}{12} \|\nabla^2v\|_{L^1_T(L^3)}.
\end{split}
\end{equation*}
We thus obtain
\begin{equation*}\label{est-Lagr-Euler-18}
\begin{split}
&\|\nabla^2 v\|_{L^1_T(L^3)}\leq 12\|\nabla\nabla_{\mathcal{A}} v\|_{L^1_T(L^3)}  \lesssim \|\nabla_{\mathcal{A}}\nabla_{\mathcal{A}} v\|_{L^1_T(L^3)}=\|\nabla^2u\|_{L^1_T(L^3)}.
\end{split}
\end{equation*}
This finishes the proof of Lemma \ref{lem-lagr-quant-est-1}.
\end{proof}

Let  $(v, \nabla{q}, b)$ and $(\bar{v}, \nabla\bar{q}, \bar{b})$ be  two solutions of the system \eqref{Lagrangian-MHD-1}.
We denote
$\delta {f}\eqdefa f-\bar{f}$. Then $(\d v, \nabla{\d q}, \d b)$ solves
\begin{equation}\label{diffdeltavb-MHD-1}
\begin{cases}
  & \rho_0\partial_t \delta{v} + \nabla_{\mathcal{A}}\,\delta{q}-\nabla_{\mathcal{A}} \cdot \nabla_{\mathcal{A}}\delta{v} =\delta\mathcal{F},\\
    &\partial_t \delta{b}+\nabla_{\mathcal{A}}\wedge(\sigma(\rho_0)\nabla_{\mathcal{A}}\wedge \delta{b})\\
    &\quad =\delta{H}-\nabla_{\mathcal{A}}\wedge(\sigma(\rho_0)\nabla_{\delta\mathcal{A}}\wedge \bar{b})-\nabla_{\delta\mathcal{A}}\wedge(\sigma(\rho_0)\nabla_{\bar{\mathcal{A}}}\wedge\bar{b}),\\
    & \nabla_{\mathcal{A}} \cdot \delta{v}+ \nabla_{\delta\mathcal{A}} \cdot \bar{v}=\nabla_{\mathcal{A}} \cdot \delta{b}+\nabla_{\delta\mathcal{A}} \cdot \bar{b}=0,\\
         &(\delta{v}, \delta{b})|_{t=0}=(0, 0),
     \end{cases}
\end{equation}
where $\delta\mathcal{F}=\delta{F}^{(1)}+\delta{F}^{(2)}$ with
\begin{equation*}\label{diffdeltavb-j-MHD-2}
\begin{split}
\delta{F}^{(1)}&\eqdefa \nabla_{\mathcal{A}}\cdot ( b\otimes {b})-\nabla_{\bar{\mathcal{A}}}\cdot (\bar{b}\otimes\bar{b})=b\cdot\nabla_{\mathcal{A}}\delta{b}+b\cdot\nabla_{\delta\mathcal{A}} \bar{b}+\delta{b}\cdot\nabla_{\bar{\mathcal{A}}} \bar{b}\\
&=\nabla_{\mathcal{A}}\cdot ( b\otimes \delta{b})+\nabla_{\mathcal{A}}\cdot ( \delta{b}\otimes \bar{b})+\nabla_{\delta{\mathcal{A}}}\cdot (\bar{b}\otimes \bar{b}),\\
\delta{F}^{(2)}&\eqdefa - \nabla_{\delta\mathcal{A}}\,\bar{q}+\nabla_{\mathcal{A}} \cdot \nabla_{\delta\mathcal{A}}\bar{v}+\nabla_{\delta\mathcal{A}} \cdot \nabla_{\bar{\mathcal{A}}}\bar{v},
     \end{split}
\end{equation*}
and
 \begin{equation}\label{diffdeltavb-j-MHD-4}
\begin{split}
\delta{H}&\eqdefa\nabla_{\mathcal{A}}\cdot ( b\otimes v)-\nabla_{\bar{\mathcal{A}}}\cdot (\bar{b}\otimes\bar{v})=b\cdot\nabla_{\mathcal{A}}\delta{v}+b\cdot\nabla_{\delta\mathcal{A}}\bar{v}+\delta{b}\cdot\nabla_{\bar{\mathcal{A}}}\bar{v}\\
&=\nabla_{\mathcal{A}}\cdot (b\otimes \delta{v})+\nabla_{\mathcal{A}}\cdot (\delta b\otimes \bar{v})+\nabla_{\delta{\mathcal{A}}}\cdot ( \bar{b}\otimes \bar{v}).
     \end{split}
\end{equation}

Alternatively, the $\delta{b}$ equation in \eqref{diffdeltavb-MHD-1} can also  be reformulated as
 \begin{equation}\label{diffb-Lagrangian-MHD-2}
\begin{split}
&\partial_t \delta{b}+\nabla_{\mathcal{A}}\wedge(\sigma(\rho_0) \delta\mathfrak{J})=\delta{H}-\nabla_{\delta\mathcal{A}}\wedge(\sigma(\rho_0)\nabla_{\bar{\mathcal{A}}}\wedge\bar{b}) \with\\
& \delta\mathfrak{J}\eqdefa \nabla_{\mathcal{A}}\wedge {b}-\nabla_{\bar{\mathcal{A}}}\wedge\bar{b}=\nabla_{\mathcal{A}}\wedge\delta{b}+ \nabla_{\delta\mathcal{A}}\wedge \bar{b}.
     \end{split}
\end{equation}

\begin{lem}\label{S4lem2}
{\sl Under the assumptions of Lemma \ref{lem-lagr-quant-est-1}, there exists a positive constant $c_7$ so that
 \begin{equation}\label{diffvb-Lagr-noj-H1-1}
\begin{split}
& \frac{d}{dt}E_{1}(t)+2c_{7} D_{1}(t) \lesssim \|(\nabla \delta{v}, \nabla\delta{b})\|_{L^2}^2\bigl(\|\partial_t\mathcal{A}\|_{L^{\infty}}+\|b\|_{L^{\infty}}^2+\|(\nabla\bar{b}, \nabla\bar{v}, \partial_t\mathcal{A})\|_{L^3}^2\bigr)\\
&+\|(b, \nabla\mathcal{A})\|_{L^3}^2\| \nabla^2\delta{v}\|_{L^2}^2+\|(\delta\mathfrak{J}, \nabla\delta{b})\|_{L^3}^2\|\partial_t\mathcal{A}\|_{L^3}+f_1(t)\,\|(\delta\mathfrak{J}, \nabla\delta{b})\|_{L^3}+\mathcal{R}_{1}+\mathcal{R}_{2},
     \end{split}
\end{equation}
where
 \begin{equation}\label{diffvb-Lagr-noj-H1-1aaa}
\begin{split}
E_{1}(t)\eqdefa &\|(\nabla_{\mathcal{A}}\delta{v}, \sqrt{\sigma(\rho_0)}\, \delta\mathfrak{J})\|_{L^2}^2-2\int_{\mathbb{R}^3} \sigma(\rho_0)(\nabla_{\bar{\mathcal{A}}}\wedge \bar{b} )  \cdot (\nabla_{\delta\mathcal{A}}\wedge \delta{b})\,dx\\
&+4C_5\|\delta\mathcal{A}\otimes\nabla\bar{b}\|_{L^2}^2,\\
D_{1}(t)\eqdefa& \|(\partial_t\delta{v}, \nabla\delta{q}, \nabla^2\delta{v}, \partial_t\delta{b})\|_{L^2}^2,\\
f_1(t)\eqdefa & \|\partial_t \bar{\mathcal{A}}\|_{L^{3}} \|\nabla\bar{b}\|_{L^6}\|\delta\mathcal{A}\|_{L^6} +\| \nabla\bar{b}\|_{L^6}\|\partial_t\delta\mathcal{A}\|_{L^2}+\|\delta\mathcal{A}\|_{L^6} \|\nabla\partial_t\bar{b}\|_{L^2},\\
 \mathcal{R}_{1}(t)\eqdefa& \|(\nabla{v}, \nabla\bar{v})\|_{L^{\infty}}^2(\|\nabla\delta\xi\|_{L^2}^2\|\nabla\bar{v}\|_{L^3}^2+\|\nabla\delta\mathcal{A}\|_{L^2}^2 +\|\delta\mathcal{A}\|_{L^6}^2 \|\nabla^2\bar{\xi}\|_{L^3}^2)\\
  & + \|\delta\mathcal{A}\|_{L^{3}}^2(\|(\partial_t\bar{v}, \nabla\bar{q}, \nabla\nabla_{\bar{\mathcal{A}}}\bar{v})\|_{L^6}^2+\|b\|_{L^{\infty}}^2 \|\nabla\bar{b}\|_{L^{6}}^2),\\
\mathcal{R}_{2}(t) \eqdefa& \|\delta\mathcal{A}\|_{L^6}^2(\|b\|_{L^3}^2  \|\nabla\bar{v}\|_{L^{\infty}}^2 +\|\nabla\bar{b}\|_{L^6}\|\partial_t\nabla\bar{b}\|_{L^2})+\|\nabla\bar{b}\|_{L^6}^2 \|\delta\mathcal{A}\|_{L^6}\|\partial_t\delta\mathcal{A}\|_{L^2}.
     \end{split}
\end{equation}}
\end{lem}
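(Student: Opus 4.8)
The plan is to derive \eqref{diffvb-Lagr-noj-H1-1} by a Desjardins--type energy estimate on the difference system, i.e. by testing the $\delta v$-equation of \eqref{diffdeltavb-MHD-1} against $\partial_t\delta v$ and the $\delta b$-equation, in the current formulation \eqref{diffb-Lagrangian-MHD-2}, against $\partial_t\delta b$, and then summing. First I would take the $L^2$ inner product of the momentum equation with $\partial_t\delta v$; integrating by parts in $-\int_{\R^3}(\nabla_{\mathcal{A}}\cdot\nabla_{\mathcal{A}}\delta v)\cdot\partial_t\delta v\,dx$ produces $\frac12\frac{d}{dt}\|\nabla_{\mathcal{A}}\delta v\|_{L^2}^2$ up to commutators generated by the time dependence of $\mathcal{A}$, which by the first identity in \eqref{identity-Lagrangian-1} have the schematic form $\mathcal{A}\mathcal{A}(\partial v)(\nabla_{\mathcal{A}}\delta v)^2$ and are hence bounded by $\|\partial_t\mathcal{A}\|_{L^\infty}\|\nabla\delta v\|_{L^2}^2$. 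The pressure term $\int_{\R^3}\nabla_{\mathcal{A}}\delta q\cdot\partial_t\delta v\,dx$ does not vanish because $\nabla_{\mathcal{A}}\cdot\delta v=-\nabla_{\delta\mathcal{A}}\cdot\bar v$ is only a lower-order error; using the elliptic equation for $\delta q$ obtained by applying $\nabla_{\mathcal{A}}\cdot$ to the momentum equation, this contribution is absorbed into the dissipation $\|(\partial_t\delta v,\nabla\delta q,\nabla^2\delta v)\|_{L^2}^2$ together with terms that I would record in $\mathcal{R}_1$. The forcing $\delta\mathcal{F}=\delta F^{(1)}+\delta F^{(2)}$ is then estimated term by term via H\"older's inequality and the interpolation inequalities used in Section \ref{sect-apriori}, after rewriting $\nabla_{\bar{\mathcal{A}}}\bar v$, $\nabla_{\bar{\mathcal{A}}}\bar b$, $\bar q$ in Eulerian terms through Remark \ref{rmk-Lagr-form-1}; this yields the remaining pieces of $\mathcal{R}_1$, $\mathcal{R}_2$ and the term $\|(b,\nabla\mathcal{A})\|_{L^3}^2\|\nabla^2\delta v\|_{L^2}^2$.

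For the magnetic part I would use the current unknown $\delta\mathfrak{J}=\nabla_{\mathcal{A}}\wedge\delta b+\nabla_{\delta\mathcal{A}}\wedge\bar b$ precisely because differentiating $\nabla_{\delta\mathcal{A}}\wedge\bar b$ in time would cost one derivative too many of $\delta\xi$. Testing \eqref{diffb-Lagrangian-MHD-2} against $\partial_t\delta b$ and integrating by parts in $\int_{\R^3}\bigl(\nabla_{\mathcal{A}}\wedge(\sigma(\rho_0)\delta\mathfrak{J})\bigr)\cdot\partial_t\delta b\,dx$ gives $\int_{\R^3}\sigma(\rho_0)\,\delta\mathfrak{J}\cdot(\nabla_{\mathcal{A}}\wedge\partial_t\delta b)\,dx$ up to a $\partial_t\mathcal{A}$-commutator; since $\nabla_{\mathcal{A}}\wedge\delta b=\delta\mathfrak{J}-\nabla_{\delta\mathcal{A}}\wedge\bar b$, one has $\nabla_{\mathcal{A}}\wedge\partial_t\delta b=\partial_t\delta\mathfrak{J}-\partial_t(\nabla_{\delta\mathcal{A}}\wedge\bar b)+[\partial_t;\nabla_{\mathcal{A}}\wedge]\delta b$, so the leading contribution is $\frac12\frac{d}{dt}\|\sqrt{\sigma(\rho_0)}\,\delta\mathfrak{J}\|_{L^2}^2$ and the remainder generates the term $f_1(t)\,\|(\delta\mathfrak{J},\nabla\delta b)\|_{L^3}$ together with the $\partial_t\mathcal{A}$-terms appearing in \eqref{diffvb-Lagr-noj-H1-1}. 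The genuinely dangerous term is $-\int_{\R^3}\partial_t\delta b\cdot\bigl(\nabla_{\delta\mathcal{A}}\wedge(\sigma(\rho_0)\nabla_{\bar{\mathcal{A}}}\wedge\bar b)\bigr)\,dx$ coming from the right-hand side of \eqref{diffb-Lagrangian-MHD-2}: after integrating by parts it equals, modulo a $\partial_t\delta\mathcal{A}$-commutator, $-\int_{\R^3}\sigma(\rho_0)(\nabla_{\bar{\mathcal{A}}}\wedge\bar b)\cdot(\nabla_{\delta\mathcal{A}}\wedge\partial_t\delta b)\,dx$, and replacing $\nabla_{\delta\mathcal{A}}\wedge\partial_t\delta b$ by $\partial_t(\nabla_{\delta\mathcal{A}}\wedge\delta b)$ minus $\partial_t\delta\mathcal{A}$-terms turns the principal part into a perfect time-derivative; moving it to the left is exactly what forces the corrector $-2\int_{\R^3}\sigma(\rho_0)(\nabla_{\bar{\mathcal{A}}}\wedge\bar b)\cdot(\nabla_{\delta\mathcal{A}}\wedge\delta b)\,dx$ in the definition of $E_1$. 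Finally, because the right-hand side genuinely needs an $L^2$ control of $\nabla\delta b$ rather than just of $\delta\mathfrak{J}$, I would use $\nabla\delta b\lesssim\delta\mathfrak{J}-\nabla_{\delta\mathcal{A}}\wedge\bar b$ up to $(\mathcal{A}-\mathbb{I})\nabla\delta b$ (the latter absorbed by smallness), and propagate $\|\delta\mathcal{A}\otimes\nabla\bar b\|_{L^2}^2$ in time through $\partial_t\delta\mathcal{A}=-\mathcal{A}\mathcal{A}\,\nabla\delta v+\cdots$ and $\partial_t\bar{\mathcal{A}}=-\bar{\mathcal{A}}\bar{\mathcal{A}}\,\nabla\bar v$; choosing the constant $C_5$ in $E_1$ large enough that the production rate of $4C_5\|\delta\mathcal{A}\otimes\nabla\bar b\|_{L^2}^2$ is dominated by the dissipation $D_1$ and by $\mathcal{R}_2$ closes this part.

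To finish I would convert every $\nabla_{\mathcal{A}}$-norm into an ordinary $\nabla$-norm and ensure that the commutator remainders of the type $\|\mathcal{A}-\mathbb{I}\|_{L^\infty}\|\nabla^2\delta v\|_{L^2}$, $\|\mathcal{A}-\mathbb{I}\|_{L^\infty}\|\nabla\delta q\|_{L^2}$, $\|\mathcal{A}-\mathbb{I}\|_{L^\infty}\|\nabla^2\delta b\|_{L^2}$ can be moved to the left and swallowed by $D_1$; this is where Lemma \ref{lem-lagr-quant-est-1} is used, and where the hypothesis $\int_0^T\|\nabla u\|_{L^\infty}\,dt\le\frac{1}{16}$ — available from $\|\nabla u\|_{L^1_T(L^\infty)}\lesssim\|(u_0,B_0)\|_{\dot B^{1/2}_{2,1}}$ in Proposition \ref{prop-B21-lip2} under the smallness assumption — is essential. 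Summing the viscous and magnetic identities, applying Young's inequality to split all mixed norms so that only a small multiple of $D_1$ remains on the right, and relabelling the harmless contributions as $\mathcal{R}_1,\mathcal{R}_2$ then gives \eqref{diffvb-Lagr-noj-H1-1} for a suitable $c_7>0$. \textbf{The main obstacle} is the resistivity term itself: the interaction of the variable coefficient $\sigma(\rho_0)$ with the geometric factor $\nabla_{\delta\mathcal{A}}$ is what forces both the passage to the current unknown $\delta\mathfrak{J}$ and the introduction of the two correctors in $E_1$, and the delicate bookkeeping is to keep all $\partial_t\mathcal{A}$- and $\partial_t\delta\mathcal{A}$-commutators organized so that no uncontrolled second derivative of $\delta\xi$ ever survives outside a perfect time-derivative.
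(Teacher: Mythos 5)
Your proposal follows essentially the same route as the paper's proof: testing the $\delta v$-equation against $\partial_t\delta v$ with the elliptic estimate for $(\nabla\delta q,\nabla^2\delta v)$, testing the reformulated $\delta b$-equation \eqref{diffb-Lagrangian-MHD-2} against $\partial_t\delta b$ with $\delta\mathfrak{J}$ as the effective unknown, converting the term $\nabla_{\delta\mathcal{A}}\wedge(\sigma(\rho_0)\nabla_{\bar{\mathcal{A}}}\wedge\bar b)$ into a perfect time derivative that produces the cross corrector in $E_1$, and adding $4C_5\|\delta\mathcal{A}\otimes\nabla\bar b\|_{L^2}^2$ to restore coercivity via $\|\nabla\delta b\|_{L^2}\lesssim\|\delta\mathfrak{J}\|_{L^2}+\|\delta\mathcal{A}\otimes\nabla\bar b\|_{L^2}$. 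All the key structural choices (the two correctors in $E_1$, the bookkeeping of $\partial_t\mathcal{A}$- and $\partial_t\delta\mathcal{A}$-commutators, the absorption of $\nabla_{\mathcal{A}}$-versus-$\nabla$ discrepancies by the smallness from Lemma \ref{lem-lagr-quant-est-1}) coincide with those in the paper.
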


\begin{proof} We divide the proof of this lemma into the following two steps:

\no{\bf Step 1.} The energy estimate of $\d v.$

We first get, by
taking the $L^2$ inner product of the  $\d v$ equations in \eqref{diffdeltavb-MHD-1} with $\partial_t\delta{v},$ that
 \begin{equation*}\label{diffv-j-Lagrangian-MHD-H1-1}
\begin{split}
& \|\sqrt{\rho_0}\partial_t\delta{v}\|_{L^2}^2+\int_{\mathbb{R}^3} \nabla_{\mathcal{A}}\delta{v}:\nabla_{\mathcal{A}}\partial_t\delta{v}\,dx\\
&=\int_{\mathbb{R}^3} \delta{q}\,\nabla_{\mathcal{A}}\cdot\partial_t\delta{v}\,dx
+\int_{\mathbb{R}^3}\bigl(b\cdot\nabla_{\mathcal{A}}\delta{b}+b\cdot\nabla_{\delta\mathcal{A}}\bar{b}
+\delta{b}\cdot\nabla_{\bar{\mathcal{A}}}\bar{b}\bigr)\cdot\partial_t\delta{v}\,dx.
     \end{split}
\end{equation*}
Notice that
\begin{align*}
 \int_{\mathbb{R}^3} \nabla_{\mathcal{A}}\delta{v}:\nabla_{\mathcal{A}}\partial_t\delta{v}\,dx
 &=\frac{1}{2}\frac{d}{dt}\|\nabla_{\mathcal{A}}\delta{v}(t)\|_{L^2}^2-\int_{\mathbb{R}^3} \nabla_{\mathcal{A}}\delta{v}:\nabla_{\partial_t\mathcal{A}}\delta{v}\,dx,\\
\int_{\mathbb{R}^3} \delta{q}\,\nabla_{\mathcal{A}}\cdot\partial_t\delta{v}\,dx&=\int_{\mathbb{R}^3} \delta{q}\,\partial_t(\nabla_{\mathcal{A}}\cdot\delta{v})\,dx-\int_{\mathbb{R}^3} \delta{q}\,\nabla_{\partial_t\mathcal{A}}\cdot\delta{v}\,dx\\
&=-\int_{\mathbb{R}^3} \delta{q}\,\partial_t(\nabla_{\delta\mathcal{A}} \cdot \bar{v})\,dx+\int_{\mathbb{R}^3} \delta{v}\cdot \nabla_{\partial_t\mathcal{A}}\delta{q}\,dx,
\end{align*}
we obtain
\begin{equation}\label{diffv-j-Lagrangian-MHD-H1-5}
\begin{split}
& \|\sqrt{\rho_0}\partial_t\delta{v}\|_{L^2}^2+\frac{1}{2}\frac{d}{dt}\|\nabla_{\mathcal{A}}\delta{v}(t)\|_{L^2}^2=\int_{\mathbb{R}^3} \nabla_{\mathcal{A}}\delta{v}:\nabla_{\partial_t\mathcal{A}}\delta{v}\,dx\\
&\qquad+\int_{\mathbb{R}^3} \,(\bar{v}\cdot\nabla_{\partial_t\delta\mathcal{A}}\delta{q} +\partial_t\bar{v}\cdot\nabla_{\delta\mathcal{A}}\delta{q} )\,dx+\int_{\mathbb{R}^3} \delta{v}\cdot \nabla_{\partial_t\mathcal{A}}\delta{q}\,dx\\
&\qquad+\int_{\mathbb{R}^3}\bigl(b\cdot\nabla_{\mathcal{A}}\delta{b}
+b\cdot\nabla_{\delta\mathcal{A}}\bar{b}+\delta{b}\cdot\nabla_{\bar{\mathcal{A}}}\bar{b}\bigr)\cdot\partial_t\delta{v}\,dx.
     \end{split}
\end{equation}
It is easy to observe that
\begin{align*}
\bigl|\int_{\mathbb{R}^3}& \nabla_{\mathcal{A}}\delta{v}:\nabla_{\partial_t\mathcal{A}}\delta{v}\,dx\bigr|+\bigl|\int_{\mathbb{R}^3} \,\partial_t\bar{v}\cdot\nabla_{\delta\mathcal{A}}\delta{q} \,dx\bigr|\\
&+\bigl|\int_{\mathbb{R}^3}\bigl(b\cdot\nabla_{\mathcal{A}}\delta{b}+b\cdot\nabla_{\delta\mathcal{A}}\bar{b}
+\delta{b}\cdot\nabla_{\bar{\mathcal{A}}}\bar{b}\bigr)\cdot\partial_t\delta{v}\,dx\bigr|\\
\lesssim &\|\nabla_{\mathcal{A}} \delta{v}\|_{L^2}^2\|\partial_t\mathcal{A}\|_{L^{\infty}}+\|\delta\mathcal{A}\|_{L^{3}}\|\partial_t\bar{v}\|_{L^6}\|\nabla\delta{q} \|_{L^2}\\
&+\bigl(\|b\|_{L^{\infty}}\|\nabla_{\mathcal{A}}\delta{b}\|_{L^{2}}+\|b\|_{L^{\infty}}\|\delta\mathcal{A}\|_{L^3}\|\nabla\bar{b}\|_{L^6}
+\|\delta{b}\|_{L^{6}}\|\nabla_{\bar{\mathcal{A}}}\bar{b}\|_{L^{3}}\bigr)\|\partial_t\delta{v}\|_{L^{2}}.
     \end{align*}
While by using integration by parts, one has
 \begin{align*}
|\int_{\mathbb{R}^3} \bar{v}\cdot\nabla_{\partial_t\delta\mathcal{A}}\delta{q} \,dx|&=|\int_{\mathbb{R}^3}\delta{q} \,\nabla_{\partial_t\delta\mathcal{A}}\cdot \bar{v}\, dx|\lesssim \|\delta{q}\|_{L^6}\,\|\partial_t\delta\mathcal{A}\|_{L^2}\|\nabla\bar{v}\|_{L^3}\\
&\lesssim \|\nabla\delta{q}\|_{L^2}\bigl(\|\nabla\delta\xi\|_{L^2}\|\nabla{v}\|_{L^{\infty}}+\|\nabla\delta{v}\|_{L^2} \bigr)\|\nabla\bar{v}\|_{L^3}
  \end{align*}
  and
  \begin{align*}
& \bigl|\int_{\mathbb{R}^3} \delta{v}\cdot \nabla_{\partial_t\mathcal{A}}\delta{q}\,dx\bigr|\lesssim \|\delta{v}\|_{L^6}\,\|\partial_t\mathcal{A}\|_{L^3}\|\nabla\delta{q}\|_{L^2}|\lesssim \|\nabla\delta{v}\|_{L^2}\,\|\partial_t\mathcal{A}\|_{L^3}\|\nabla\delta{q}\|_{L^2}.
  \end{align*}
  By substituting the above estimates into \eqref{diffv-j-Lagrangian-MHD-H1-5},
 we arrive at
 \begin{align*}
 &\frac{d}{dt}\|\nabla_{\mathcal{A}}\delta{v}(t)\|_{L^2}^2+2c_{2}\|\partial_t\delta{v}\|_{L^2}^2\lesssim \|\nabla_{\mathcal{A}} \delta{v}\|_{L^2}^2\|\partial_t\mathcal{A}\|_{L^{\infty}}\\
&+\|(\partial_t\delta{v}, \nabla\delta{q})\|_{L^2}\|(\nabla\delta{v}, \nabla\delta{b})\|_{L^2}(\|b\|_{L^{\infty}}+\|(\nabla\bar{v}, \nabla\bar{b}, \partial_t\mathcal{A})\|_{L^3})\\
&+\|(\partial_t\delta{v}, \nabla\delta{q})\|_{L^2} \bigl(\|\delta\mathcal{A}\|_{L^3}(\|b\|_{L^{\infty}}\|\nabla\bar{b}\|_{L^6}+\|\partial_t\bar{v}\|_{L^6})+\|\nabla\delta\xi\|_{L^2}\|\nabla{v}\|_{L^{\infty}}\|\nabla\bar{v}\|_{L^3}\bigr).
\end{align*}
By applying Young's inequality, we obtain
 \begin{equation}\label{diffv-j-Lagrangian-MHD-H1-7}
\begin{split}
\frac{d}{dt}&\|\nabla_{\mathcal{A}}\delta{v}(t)\|_{L^2}^2+ 2c_{2}\|\partial_t\delta{v}\|_{L^2}^2\\
\leq &\varepsilon \|(\partial_t\delta{v}, \nabla\delta{q}) \|_{L^{2}}^2+C_{\varepsilon}\|(\nabla \delta{v}, \nabla\delta{b})\|_{L^2}^2(\|\partial_t\mathcal{A}\|_{L^{\infty}}+\|b\|_{L^{\infty}}^2+\|(\nabla\bar{v}, \nabla\bar{b}, \partial_t\mathcal{A})\|_{L^3}^2)\\
&+C_{\varepsilon}\bigl(\|\delta\mathcal{A}\|_{L^3}^2(\|b\|_{L^{\infty}}^2\|\nabla\bar{b}\|_{L^6}^2+\|\partial_t\bar{v}\|_{L^6}^2)
+\|\nabla\delta\xi\|_{L^2}^2\|\nabla{v}\|_{L^{\infty}}^2\|\nabla\bar{v}\|_{L^3}^2\bigr)
     \end{split}
\end{equation}
for any positive constant $\varepsilon$.

 In order to handle the estimate about $\|(\nabla^2\delta{v}, \nabla\delta{q}) \|_{L^{2}}$, we use the momentum equations in \eqref{diffdeltavb-MHD-1} to get
 \begin{equation*}\label{diffv-j-Lagrangian-MHD-H1-9}
\begin{split}
  &\bigl\|(\nabla_{\mathcal{A}}\,\delta{q}-\nabla_{\mathcal{A}} \cdot \nabla_{\mathcal{A}}\delta{v})\bigr\|_{L^2} \leq \|\rho_0\partial_t \delta{v}\|_{L^2} + \|\delta\mathcal{F}\|_{L^2}.
     \end{split}
\end{equation*}
While due to $ \nabla_{\mathcal{A}} \cdot \delta{v}+ \nabla_{\delta\mathcal{A}} \cdot \bar{v}=0$, one has
 \begin{align*}
  & \|\nabla_{\mathcal{A}}\,\delta{q}-\nabla_{\mathcal{A}} \cdot \nabla_{\mathcal{A}}\delta{v}\|_{L^2}^2\\
  &=\|\nabla_{\mathcal{A}}\,\delta{q}\|_{L^2}^2+\|\nabla_{\mathcal{A}} \cdot \nabla_{\mathcal{A}}\delta{v}\|_{L^2}^2-2\int_{\mathbb{R}^3} \nabla_{\mathcal{A}} \delta{q} :\nabla_{\mathcal{A}}(\nabla_{\mathcal{A}}\cdot\delta{v})\,dx\\
  &=\|\nabla_{\mathcal{A}}\,\delta{q}\|_{L^2}^2+\|\nabla_{\mathcal{A}} \cdot \nabla_{\mathcal{A}}\delta{v}\|_{L^2}^2+2\int_{\mathbb{R}^3} \nabla_{\mathcal{A}} \delta{q} :\nabla_{\mathcal{A}}(\nabla_{\delta\mathcal{A}} \cdot \bar{v})\,dx.
     \end{align*}
As a result, it comes out
 \begin{equation*}\label{diffv-j-Lagrangian-MHD-H1-11}
\begin{split}
  &\|\nabla_{\mathcal{A}}\,\delta{q}\|_{L^2}^2+\|\nabla_{\mathcal{A}} \cdot \nabla_{\mathcal{A}}\delta{v}\|_{L^2}^2\lesssim \|\nabla_{\mathcal{A}} \delta{q}\|_{L^2}\|\nabla_{\mathcal{A}}(\nabla_{\delta\mathcal{A}} \cdot \bar{v})\|_{L^2}+ \|\partial_t \delta{v}\|_{L^2}^2 + \|\delta\mathcal{F}\|_{L^2}^2.
     \end{split}
\end{equation*}
Applying Young's inequality yields
\begin{equation*}\label{diffv-j-Lagrangian-MHD-H1-13}
\begin{split}
&\|\nabla_{\mathcal{A}}\,\delta{q}\|_{L^2}^2+\|\nabla_{\mathcal{A}} \cdot \nabla_{\mathcal{A}}\delta{v}\|_{L^2}^2\lesssim  \|\nabla_{\mathcal{A}}(\nabla_{\delta\mathcal{A}} \cdot \bar{v})\|_{L^2}^2+\|\partial_t \delta{v}\|_{L^2}^2 + \|\delta\mathcal{F}\|_{L^2}^2.
     \end{split}
\end{equation*}
Notice that
 \begin{equation*}\label{diffv-j-Lagrangian-MHD-H1-14}
\begin{split}
&\|\nabla_{\mathcal{A}}(\nabla_{\delta\mathcal{A}} \cdot \bar{v})\|_{L^2}^2+ \|\delta\mathcal{F}\|_{L^2}^2\lesssim \|\nabla\delta\mathcal{A}  \nabla\bar{v}\|_{L^2}^2+\|\delta\mathcal{A} \nabla{D\bar{\eta}}^T \nabla\bar{v}\|_{L^2}^2+ \|b\|_{L^{\infty}}^2\|\nabla\delta{b}\|_{L^{2}}^2\\
&\qquad+\|\delta{b}\|_{L^{6}}^2\|\nabla\bar{b}\|_{L^{3}}^2 +\|b\|_{L^{\infty}}^2\|\delta\mathcal{A}\|_{L^3}^2\|\nabla\bar{b}\|_{L^{6}}^2+\|\delta\mathcal{A}\|_{L^{3}}^2\|(\nabla\bar{q}, \nabla\nabla_{\bar{\mathcal{A}}}\bar{v})\|_{L^6}^2.
     \end{split}
\end{equation*}
We thus obtain
\begin{equation*}\label{diffv-j-Lagrangian-MHD-H1-16}
\begin{split}
&\|\nabla\delta{q}\|_{L^2}^2+\|\nabla^2\delta{v}\|_{L^2}^2\lesssim \|\nabla\mathcal{A} \nabla\delta{v}\|_{L^2}^2 +\|\nabla\delta\mathcal{A}  \nabla\bar{v}\|_{L^2}^2+\|\delta\mathcal{A} \nabla{D\bar{\eta}}^T \nabla\bar{v}\|_{L^2}^2+\|\partial_t \delta{v}\|_{L^2}^2\\
  & +(\|b\|_{L^{\infty}}^2+\|\nabla\bar{b}\|_{L^{3}}^2)\|\nabla\delta{b}\|_{L^{2}}^2+\|b\|_{L^{\infty}}^2\|\delta\mathcal{A}\|_{L^3}^2\|\nabla\bar{b}\|_{L^{6}}^2
  +\|\delta\mathcal{A}\|_{L^{3}}^2\|(\nabla\bar{q}, \nabla\nabla_{\bar{\mathcal{A}}}\bar{v})\|_{L^6}^2,
     \end{split}
\end{equation*}
from which, we infer
 \begin{equation}\label{diffv-j-Lagrangian-MHD-H1-19}
\begin{split}
\|\nabla\delta{q}\|_{L^2}^2+\|\nabla^2\delta{v}\|_{L^2}^2
  \lesssim&\|\partial_t \delta{v}\|_{L^2}^2 +\|\nabla\mathcal{A}\|_{L^3}^2 \| \nabla^2\delta{v}\|_{L^2}^2+\|\delta\mathcal{A}\|_{L^6}^2 \|\nabla{D\bar{\eta}}\|_{L^3}^2 \|\nabla\bar{v}\|_{L^{\infty}}^2\\
  &+(\|b\|_{L^{\infty}}^2+\|\nabla\bar{b}\|_{L^{3}}^2)\|\nabla\delta{b}\|_{L^{2}}^2+\|\nabla\delta\mathcal{A}\|_{L^2}^2  \|\nabla\bar{v}\|_{L^{\infty}}^2\\
  &+ \|b\|_{L^{\infty}}^2\|\delta\mathcal{A}\|_{L^3}^2 \|\nabla\bar{b}\|_{L^{6}}^2
  +\|\delta\mathcal{A}\|_{L^{3}}^2\|(\nabla\bar{q}, \nabla\nabla_{\bar{\mathcal{A}}}\bar{v})\|_{L^6}^2.
     \end{split}
\end{equation}

By
combining \eqref{diffv-j-Lagrangian-MHD-H1-7} with \eqref{diffv-j-Lagrangian-MHD-H1-19}, we find
 \begin{equation}\label{diffv-j-Lagrangian-MHD-H1-20}
\begin{split}
\frac{d}{dt}&\|\nabla_{\mathcal{A}}\delta{v}(t)\|_{L^2}^2+ 2c_{3}\|(\partial_t\delta{v}, \nabla\delta{q}, \nabla^2\delta{v})\|_{L^2}^2\leq C_{2}\Bigl(\|\nabla\mathcal{A}\|_{L^3}^2 \| \nabla^2\delta{v}\|_{L^2}^2\\
&+ \|(\nabla \delta{v}, \nabla\delta{b})\|_{L^2}^2\bigl(\|\partial_t\mathcal{A}\|_{L^{\infty}}+\|b\|_{L^{\infty}}^2+\|(\nabla\bar{b}, \nabla\bar{v}, \partial_t\mathcal{A})\|_{L^3}^2\bigr)+\mathcal{R}_{1}\Bigr)
     \end{split}
\end{equation}
with $\mathcal{R}_{1}$ being given by \eqref {diffvb-Lagr-noj-H1-1aaa}.\\

\no{\bf Step 2.} The energy estimate of $\d b.$

 We first get, by taking the $L^2$ inner product of the equations \eqref{diffb-Lagrangian-MHD-2} with $\partial_t\delta{b},$ that
 \begin{equation}\label{diffb-j-Lagrangian-MHD-H1-1}
\begin{split}
&  \|\partial_t\delta{b}\|_{L^2}^2+\int_{\mathbb{R}^3}\sigma(\rho_0)\, \delta\mathfrak{J}\cdot \nabla_{\mathcal{A}}\wedge\partial_t\delta{b}\,dx=\int_{\mathbb{R}^3}\bigl(\delta{H}+\nabla_{\delta\mathcal{A}}\wedge(\sigma(\rho_0)\nabla_{\bar{\mathcal{A}}}\wedge \bar{b})\bigr) \cdot \partial_t\delta{b}\,dx.
     \end{split}
\end{equation}
Yet observing that
\begin{align*}
&  \int_{\mathbb{R}^3}\sigma(\rho_0)\delta\mathfrak{J}\cdot \nabla_{\mathcal{A}}\wedge\partial_t\delta{b}\,dx \\
&=\int_{\mathbb{R}^3}\sigma(\rho_0) \delta\mathfrak{J}\cdot\partial_t(\nabla_{\mathcal{A}}\wedge\delta{b})\,dx-\int_{\mathbb{R}^3}\sigma(\rho_0) \delta\mathfrak{J}\cdot (\nabla_{\partial_t\mathcal{A}}\wedge \delta{b})\,dx\\
&=\int_{\mathbb{R}^3}\sigma(\rho_0)\, \delta\mathfrak{J}\cdot\partial_t(\delta\mathfrak{J}- \nabla_{\delta\mathcal{A}}\wedge \bar{b})\,dx-\int_{\mathbb{R}^3}\sigma(\rho_0)\, \delta\mathfrak{J}\cdot (\nabla_{\partial_t\mathcal{A}}\wedge \delta{b})\,dx\\
&=\frac{1}{2}\frac{d}{dt}\|\sqrt{\sigma(\rho_0)}\, \delta\mathfrak{J}(t)\|_{L^2}^2-\int_{\mathbb{R}^3}\sigma(\rho_0)\, \delta\mathfrak{J}\cdot( \partial_t(\nabla_{\delta\mathcal{A}}\wedge \bar{b})+\nabla_{\partial_t\mathcal{A}}\wedge \delta{b})\,dx,
     \end{align*}
from which, we infer
 \begin{equation*}\label{diffb-j-Lagrangian-MHD-H1-3}
\begin{split}
&  \int_{\mathbb{R}^3}\sigma(\rho_0)\, \delta\mathfrak{J}\, \nabla_{\mathcal{A}}\wedge \partial_t\delta{b}\,dx \\
&=\frac{1}{2}\frac{d}{dt}\|\sqrt{\sigma(\rho_0)} \delta\mathfrak{J}(t)\|_{L^2}^2-\int_{\mathbb{R}^3}\sigma(\rho_0) \delta\mathfrak{J}\cdot (\nabla_{\partial_t\delta\mathcal{A}}\wedge \bar{b})\,dx\\
&\qquad +\int_{\mathbb{R}^3}\nabla_{\delta\mathcal{A}}\wedge(\sigma(\rho_0) \delta\mathfrak{J})\cdot \partial_t\bar{b}\,dx-\int_{\mathbb{R}^3}\sigma(\rho_0) \delta\mathfrak{J}\cdot (\nabla_{\partial_t\mathcal{A}}\wedge \delta{b})\,dx,
     \end{split}
\end{equation*}
Similarly, one has
 \begin{equation*}\label{diffb-j-Lagr-H1-4}
\begin{split}
&  \int_{\mathbb{R}^3} \nabla_{\delta\mathcal{A}}\wedge(\sigma(\rho_0)\nabla_{\bar{\mathcal{A}}}\wedge \bar{b})  \cdot \partial_t\delta{b}\,dx=\int_{\mathbb{R}^3} \sigma(\rho_0)(\nabla_{\bar{\mathcal{A}}}\wedge \bar{b})   \cdot (\nabla_{\delta\mathcal{A}}\wedge\partial_t\delta{b})\,dx\\
&=\frac{d}{dt}\int_{\mathbb{R}^3} \sigma(\rho_0)(\nabla_{\bar{\mathcal{A}}}\wedge \bar{b} )  \cdot (\nabla_{\delta\mathcal{A}}\wedge \delta{b})(t)\,dx\\
&-\int_{\mathbb{R}^3} \sigma(\rho_0)\partial_t(\nabla_{\bar{\mathcal{A}}}\wedge \bar{b})   \cdot \nabla_{\delta\mathcal{A}}\wedge \delta{b}\,dx-\int_{\mathbb{R}^3} \sigma(\rho_0)(\nabla_{\bar{\mathcal{A}}}\wedge \bar{b})   \cdot \nabla_{\partial_t\delta\mathcal{A}}\wedge \delta{b}\,dx.
     \end{split}
\end{equation*}
By substituting the above equalities into \eqref{diffb-j-Lagrangian-MHD-H1-1},
We obtain
 \begin{equation*}\label{diffb-j-Lagrangian-MHD-H1-5}
\begin{split}
 \frac{d}{dt}&\Bigl(\frac{1}{2}\|\sqrt{\sigma(\rho_0)}\, \delta\mathfrak{J}(t)\|_{L^2}^2-\int_{\mathbb{R}^3} \sigma(\rho_0)(\nabla_{\bar{\mathcal{A}}}\wedge \bar{b} )  \cdot (\nabla_{\delta\mathcal{A}}\wedge \delta{b})(t)\,dx\Bigr)+\|\partial_t\delta{b}\|_{L^2}^2\\
    =&\int_{\mathbb{R}^3}\sigma(\rho_0)\, \delta\mathfrak{J}\cdot\bigl( \nabla_{\partial_t\delta\mathcal{A}}\wedge \bar{b}-\nabla_{\delta\mathcal{A}}\wedge\partial_t\bar{b}+\nabla_{\partial_t\mathcal{A}}\wedge \delta{b}\bigr)\,dx+\int_{\mathbb{R}^3} \delta{H}  \cdot \partial_t\delta{b}\,dx\\
&-\int_{\mathbb{R}^3} \sigma(\rho_0)\partial_t(\nabla_{\bar{\mathcal{A}}}\wedge \bar{b})   \cdot \nabla_{\delta\mathcal{A}}\wedge \delta{b}\,dx-\int_{\mathbb{R}^3} \sigma(\rho_0)(\nabla_{\bar{\mathcal{A}}}\wedge \bar{b})   \cdot \nabla_{\partial_t\delta\mathcal{A}}\wedge \delta{b}\,dx,
     \end{split}
\end{equation*}
which results in
 \begin{equation}\label{diffb-j-Lagrangian-MHD-H1-6}
\begin{split}
\frac{d}{dt}&\Bigl(\|\sqrt{\sigma(\rho_0)}\, \delta\mathfrak{J}(t)\|_{L^2}^2-2\int_{\mathbb{R}^3} \sigma(\rho_0)(\nabla_{\bar{\mathcal{A}}}\wedge \bar{b} )  \cdot (\nabla_{\delta\mathcal{A}}\wedge \delta{b})(t)\,dx\Bigr)+2\|\partial_t\delta{b}\|_{L^2}^2\\
    \lesssim &\|\delta\mathfrak{J}\|_{L^3}(\| \nabla\bar{b}\|_{L^6}\|\partial_t\delta\mathcal{A}\|_{L^2}+\|\delta\mathcal{A}\|_{L^6} \|\nabla\partial_t\bar{b}\|_{L^2})+\|\partial_t\mathcal{A}\|_{L^3} \|\delta\mathfrak{J}\|_{L^3}\|\nabla\delta{b}\|_{L^3}\\
    &+\|\delta{H} \|_{L^2} \| \partial_t\delta{b}\|_{L^2}+(\|\partial_t(\nabla_{\bar{\mathcal{A}}}\wedge \bar{b})\|_{L^2}\|\delta\mathcal{A}\|_{L^6} +\|\nabla\bar{b}\|_{L^6}  \|\partial_t\delta\mathcal{A}\|_{L^2}) \|\nabla\delta{b}\|_{L^3}.
     \end{split}
\end{equation}
It is obvious to observe from \eqref{diffdeltavb-j-MHD-4} that
 \begin{equation*}\label{est-H-L2-1}
\begin{split}
\|\delta{H}\|_{L^2} \lesssim &\|b\|_{L^3} \|\nabla\delta{v}\|_{L^6} +\|b\|_{L^3} \|\delta\mathcal{A}\|_{L^6} \|\nabla\bar{v}\|_{L^{\infty}} +\|\delta{b}\|_{L^6} \|\nabla\bar{v}\|_{L^3}\\
\lesssim &\|b\|_{L^3} \|\nabla^2\delta{v}\|_{L^2} +\|b\|_{L^3} \|\delta\mathcal{A}\|_{L^6} \|\nabla\bar{v}\|_{L^{\infty}} + \|\nabla\delta{b}\|_{L^2}\|\nabla\bar{v}\|_{L^3}.
     \end{split}
\end{equation*}
Then we get, by using Young's inequality to
\eqref{diffb-j-Lagrangian-MHD-H1-6},  that
\begin{equation}\label{diffb-Lagr-MHD-H1-10}
\begin{split}
 \frac{d}{dt}\Bigl(&\|\sqrt{\sigma(\rho_0)}\, \delta\mathfrak{J}(t)\|_{L^2}^2-2\int_{\mathbb{R}^3} \sigma(\rho_0)(\nabla_{\bar{\mathcal{A}}}\wedge \bar{b} )  \cdot (\nabla_{\delta\mathcal{A}}\wedge \delta{b})(t)\,dx\Bigr)+\|\partial_t\delta{b}\|_{L^2}^2\\
\lesssim & \|b\|_{L^3}^2 \|\nabla^2\delta{v}\|_{L^2}^2 +\|b\|_{L^3}^2 \|\delta\mathcal{A}\|_{L^6}^2 \|\nabla\bar{v}\|_{L^{\infty}}^2 + \|\nabla\delta{b}\|_{L^2}^2\|\nabla\bar{v}\|_{L^3}^2\\
    &+(\|\delta\mathfrak{J}\|_{L^3}+\|\nabla\delta{b}\|_{L^3})\,f_1(t)
    +(\|\delta\mathfrak{J}\|_{L^3}^2+\|\nabla\delta{b}\|_{L^3}^2)\|\partial_t\mathcal{A}\|_{L^3}
     \end{split}
\end{equation}
with $f_1(t)$ being given by \eqref{diffvb-Lagr-noj-H1-1aaa}.

Notice that
 \begin{equation*}\label{H1-energy-cont-1}
\begin{split}
|\int_{\mathbb{R}^3} \sigma(\rho_0)(\nabla_{\bar{\mathcal{A}}}\wedge \bar{b} )  \cdot (\nabla_{\delta\mathcal{A}}\wedge \delta{b})\,dx| \lesssim \|\delta\mathcal{A}\otimes\nabla\bar{b}\|_{L^2}\|\nabla\delta{b}|\|_{L^2}
     \end{split}
\end{equation*}
and
 \begin{equation*}\label{J-nabla-b-1}
\begin{split}
&\|\nabla\delta{b}\|_{L^2}\lesssim \|\nabla_{\mathcal{A}}\wedge\delta{b}\|_{L^2}+\|\nabla_{\mathcal{A}}\cdot\delta{b}\|_{L^2}\lesssim \|\delta\mathfrak{J}\|_{L^2}+ \|\delta\mathcal{A}\otimes\nabla\bar{b}\|_{L^2},
     \end{split}
\end{equation*}
we find
 \begin{equation*}\label{H1-energy-cont-2}
\begin{split}
2\bigl|\int_{\mathbb{R}^3} \sigma(\rho_0)(\nabla_{\bar{\mathcal{A}}}\wedge \bar{b} )  \cdot (\nabla_{\delta\mathcal{A}}\wedge \delta{b})\,dx\bigr| & \leq\frac{1}{4}\|\sqrt{\sigma(\rho_0)}\, \delta\mathfrak{J}\|_{L^2}^2+C_5\|\delta\mathcal{A}\otimes\nabla\bar{b}\|_{L^2}^2,
     \end{split}
\end{equation*}
which implies
 \begin{equation}\label{H1-energy-cont-3}
\begin{split}
&\frac{1}{2}\|\sqrt{\sigma(\rho_0)}\, \delta\mathfrak{J}\|_{L^2}^2+2C_5\|\delta\mathcal{A}\otimes\nabla\bar{b}\|_{L^2}^2\\
&\leq \|\sqrt{\sigma(\rho_0)}\, \delta\mathfrak{J}\|_{L^2}^2-2\int_{\mathbb{R}^3} \sigma(\rho_0)(\nabla_{\bar{\mathcal{A}}}\wedge \bar{b} )  \cdot (\nabla_{\delta\mathcal{A}}\wedge \delta{b})\,dx+4C_5\|\delta\mathcal{A}\otimes\nabla\bar{b}\|_{L^2}^2\\
& \leq\frac{3}{2}\|\sqrt{\sigma(\rho_0)}\, \delta\mathfrak{J}\|_{L^2}^2+6C_5\|\delta\mathcal{A}\otimes\nabla\bar{b}\|_{L^2}^2.
     \end{split}
\end{equation}
Thanks to   \eqref{diffb-Lagr-MHD-H1-10}, \eqref{H1-energy-cont-3} and the fact:
\begin{equation*}\label{diffb-j-Lagrangian-MHD-H1-13bbb}
\begin{split}
 \frac{d}{dt}\|\delta\mathcal{A}\otimes\nabla\bar{b}(t)\|_{L^2}^2&\lesssim \int_{\mathbb{R}^3}(|\partial_t\delta\mathcal{A}\otimes\nabla\bar{b}||\delta\mathcal{A}\otimes\nabla\bar{b}|
+|\delta\mathcal{A}\otimes\partial_t\nabla\bar{b}||\delta\mathcal{A}\otimes\nabla\bar{b}|)\,dx\\
&\lesssim \|\nabla\bar{b}\|_{L^6}^2 \|\delta\mathcal{A}\|_{L^6}\|\partial_t\delta\mathcal{A}\|_{L^2}
+\|\delta\mathcal{A}\|_{L^6}^2 \|\nabla\bar{b}\|_{L^6}\|\partial_t\nabla\bar{b}\|_{L^2},
     \end{split}
\end{equation*}
we deduce  that
\begin{equation*}\label{diffb-Lagr-MHD-H1-100}
\begin{split}
\frac{d}{dt}\Bigl(&\|\sqrt{\sigma(\rho_0)}\, \delta\mathfrak{J}(t)\|_{L^2}^2-2\int_{\mathbb{R}^3} \sigma(\rho_0)(\nabla_{\bar{\mathcal{A}}}\wedge \bar{b} )  \cdot (\nabla_{\delta\mathcal{A}}\wedge \delta{b})(t)\,dx\\
&+4C_5\|\delta\mathcal{A}\otimes\nabla\bar{b}(t)\|_{L^2}^2\Bigr)+\|\partial_t\delta{b}\|_{L^2}^2\\
\lesssim &\|b\|_{L^3}^2 \|\nabla^2\delta{v}\|_{L^2}^2 + \|\nabla\delta{b}\|_{L^2}^2\|\nabla\bar{v}\|_{L^3}^2+\bigl(\|\delta\mathfrak{J}\|_{L^3}^2+\|\nabla\delta{b}\|_{L^3}^2\bigr)\|\partial_t\mathcal{A}\|_{L^3}\\
    &+\bigl(\|\delta\mathfrak{J}\|_{L^3}+\|\nabla\delta{b}\|_{L^3}\bigr)\,f_1(t)
    +\mathcal{R}_{2}
     \end{split}
\end{equation*}
with $\mathcal{R}_{2}$ being given by \eqref{diffvb-Lagr-noj-H1-1aaa}.

By summing up \eqref{diffb-Lagr-MHD-H1-10} and \eqref{diffv-j-Lagrangian-MHD-H1-20}, we obtain \eqref{diffvb-Lagr-noj-H1-1}.
This completes the proof of Lemma \ref{S4lem2}. \end{proof}

Now we are in a position to present  the proof of the uniqueness part of Theorem \ref{mainthm-GWP}.

\begin{proof}[Proof of the uniqueness part of Theorem \ref{mainthm-GWP}]
Let us first deal with the estimate of  $\|\delta\mathfrak{J}\|_{L^3}$. Observing from \eqref{diffb-Lagrangian-MHD-2} that
 \begin{equation*}
\begin{split}
&\nabla_{\mathcal{A}}\wedge(\nabla_{\mathcal{A}}\wedge \delta\mathfrak{J})=\nabla_{\mathcal{A}}\wedge(-\nabla_{\mathcal{A}}\wedge((\sigma(\rho_0)-1) \delta\mathfrak{J})-\partial_t \delta{b}+\delta{H}-\nabla_{\delta\mathcal{A}}\wedge(\sigma(\rho_0)\nabla_{\bar{\mathcal{A}}}\wedge\bar{b})),
     \end{split}
\end{equation*}
we write
 \begin{equation*}\label{expre-J-1}
\begin{split}
&-\Delta_{\mathcal{A}}\delta\mathfrak{J}=-\nabla_{\mathcal{A}}(\nabla_{\mathcal{A}}\cdot \delta\mathfrak{J})\\
&\quad+\nabla_{\mathcal{A}}\wedge\bigg(-\nabla_{\mathcal{A}}\wedge((\sigma(\rho_0)-1) \delta\mathfrak{J})-\partial_t \delta{b}+\delta {H}-\nabla_{\delta\mathcal{A}}\wedge(\sigma(\rho_0)\nabla_{\bar{\mathcal{A}}}\wedge\bar{b})\bigg).
     \end{split}
\end{equation*}
While it follows from the second equation of
\eqref{diffb-Lagrangian-MHD-2} that $$\nabla_{\mathcal{A}}\cdot \delta\mathfrak{J}=\nabla_{\mathcal{A}}\cdot (\nabla_{\mathcal{A}}\wedge \delta{b}+ \nabla_{\delta\mathcal{A}}\wedge \bar{b})
=\nabla_{\mathcal{A}}\cdot (\nabla_{\delta\mathcal{A}}\wedge \bar{b}),$$ so that we have
 \begin{equation*}
\begin{split}
&-\Delta_{\mathcal{A}}\delta\mathfrak{J}=-\nabla_{\mathcal{A}}(\nabla_{\mathcal{A}}\cdot (\nabla_{\delta\mathcal{A}}\wedge \bar{b}))\\
&\quad+\nabla_{\mathcal{A}}\wedge\bigg(-\nabla_{\mathcal{A}}\wedge((\sigma(\rho_0)-1) \delta\mathfrak{J})-\partial_t \delta{b}+\delta {H}-\nabla_{\delta\mathcal{A}}\wedge(\sigma(\rho_0)\nabla_{\bar{\mathcal{A}}}\wedge\bar{b})\bigg),
     \end{split}
\end{equation*}
from which, we infer
 \begin{equation}\label{expre-J-3}
\begin{split}
\|\delta\mathfrak{J}\|_{L^3}\lesssim &\|\nabla_{\delta\mathcal{A}}\wedge \bar{b}\|_{L^3}+\|(\sigma(\rho_0)-1) \delta\mathfrak{J}\|_{L^3}+\|(-\Delta_{\mathcal{A}})^{-\frac{1}{2}}\partial_t \delta{b}\|_{L^3}\\
&+\|(-\Delta_{\mathcal{A}})^{-\frac{1}{2}}\delta {H}\|_{L^3}+\|(-\Delta_{\mathcal{A}})^{-\frac{1}{2}}\nabla_{\delta\mathcal{A}}\wedge(\sigma(\rho_0)\nabla_{\bar{\mathcal{A}}}\wedge\bar{b}))\|_{L^3}\\
\lesssim &\|\delta\mathcal{A}\|_{L^6}\|\nabla\wedge \bar{b}\|_{L^6}+\|(\sigma(\rho_0)-1)\|_{L^{\infty}} \|\delta\mathfrak{J}\|_{L^3}+\|(-\Delta_{\mathcal{A}})^{-\frac{1}{2}}\partial_t \delta{b}\|_{L^2}^{\frac{1}{2}}\|\partial_t \delta{b}\|_{L^2}^{\frac{1}{2}}\\
&+\|(-\Delta_{\mathcal{A}})^{-\frac{1}{2}}\delta {H}\|_{L^3}+\|(-\Delta_{\mathcal{A}})^{-\frac{1}{2}}\nabla_{\delta\mathcal{A}}\wedge(\sigma(\rho_0)\nabla_{\bar{\mathcal{A}}}\wedge\bar{b}))\|_{L^3}.
     \end{split}
\end{equation}
 It is easy to observe from \eqref{diffdeltavb-j-MHD-4} and the fact:  $\|(-\Delta_{\mathcal{A}})^{-\frac{1}{2}}\nabla\|_{L^p} \lesssim 1$ ($\forall\,p \in (1, \infty)$), that
 \begin{equation*}
\begin{split}
\|(-\Delta_{\mathcal{A}})^{-\frac{1}{2}}\delta {H}\|_{L^3}&\lesssim \|b\otimes \delta{v}\|_{L^3}+\|\delta b\otimes \bar{v}\|_{L^3}+\|\delta{\mathcal{A}}\otimes ( \bar{b}\otimes \bar{v})\|_{L^3}\\
&\lesssim \|b\|_{L^6} \|\delta{v}\|_{L^6}+\|\delta b\|_{L^6}\|\bar{v}\|_{L^6}+\|\delta{\mathcal{A}}\|_{L^6}\|\bar{b}\|_{L^6}\|\bar{v}\|_{L^{\infty}},
     \end{split}
\end{equation*}
and
 \begin{equation*}
\begin{split}
&\|(-\Delta_{\mathcal{A}})^{-\frac{1}{2}}\nabla_{\delta\mathcal{A}}\wedge(\sigma(\rho_0)\nabla_{\bar{\mathcal{A}}}\wedge\bar{b}))\|_{L^3}\lesssim \|\delta\mathcal{A}\otimes(\sigma(\rho_0)\nabla_{\bar{\mathcal{A}}}\wedge\bar{b})\|_{L^3}\lesssim \|\delta\mathcal{A}\|_{L^6}\|\nabla\bar{b}\|_{L^6}.
     \end{split}
\end{equation*}
 While due to the $\delta{b}$ equations of \eqref{diffdeltavb-MHD-1}, one has
 \begin{align*}
\|(-\Delta_{\mathcal{A}})^{-\frac{1}{2}}\partial_t \delta{b}\|_{L^2} \lesssim & \|(-\Delta_{\mathcal{A}})^{-\frac{1}{2}}\nabla_{\mathcal{A}}\wedge(\sigma(\rho_0) \delta\mathfrak{J})\|_{L^2}\\
&+\|(-\Delta_{\mathcal{A}})^{-\frac{1}{2}}\delta {H}\|_{L^2}+\|(-\Delta_{\mathcal{A}})^{-\frac{1}{2}}\nabla_{\delta\mathcal{A}}\wedge(\sigma(\rho_0)\nabla_{\bar{\mathcal{A}}}\wedge\bar{b})\|_{L^2}\\
\lesssim& \|\delta\mathfrak{J}\|_{L^2}+\|b\otimes \delta{v}\|_{L^2}+\|\delta b\otimes \bar{v}\|_{L^2}+\|\delta{\mathcal{A}}\otimes ( \bar{b}\otimes \bar{v})\|_{L^2}\\
&+\|\delta\mathcal{A}\otimes( \nabla_{\bar{\mathcal{A}}}\wedge\bar{b})\|_{L^2},
\end{align*}
which implies
 \begin{equation*}\label{expre-J-6}
\begin{split}
\|(-\Delta_{\mathcal{A}})^{-\frac{1}{2}}\partial_t \delta{b}\|_{L^2} \lesssim &\|\delta\mathfrak{J}\|_{L^2}+\|b\|_{L^3}\|\delta{v}\|_{L^6}+\|\delta b\|_{L^6} \|\bar{v}\|_{L^3}\\
&+\|\delta{\mathcal{A}}\|_{L^3}(\|\bar{b}\|_{L^6}\|\bar{v}\|_{L^\infty}+\|\nabla\bar{b}\|_{L^6}),
     \end{split}
\end{equation*}
By substituting the above estimates into \eqref{expre-J-3} and using the smallness of $\|(\sigma(\rho_0)-1)\|_{L^{\infty}}$, we obtain
 \begin{equation}\label{J-j-L3-2}
\begin{split}
\|\delta\mathfrak{J}&\|_{L^3}^2\lesssim  (1+\|(\bar{v}, b)\|_{L^3} )\|(\delta\mathfrak{J}, \nabla\delta{v}, \nabla\delta{b})\|_{L^2} \|\partial_t \delta{b}\|_{L^2} +\|(\bar{v}, b)\|_{L^6}^2 \|(\nabla\delta{v}, \nabla\delta b)\|_{L^2}^2\\
&+\|\delta{\mathcal{A}}\|_{L^3} (\|\bar{b}\|_{L^6} \|\bar{v}\|_{L^\infty}
+\|\nabla\bar{b}\|_{L^6} )\|\partial_t \delta{b}\|_{L^2} +\|\delta\mathcal{A}\|_{L^6}^2(\|\nabla\bar{b}\|_{L^6}^2+\|\bar{b}\|_{L^6}^2\|\bar{v}\|_{L^{\infty}}^2).
     \end{split}
\end{equation}

While it follows  from \eqref{diffb-Lagrangian-MHD-2} that
 \begin{equation*}\label{J-j-L3-3}
\begin{split}
\|\nabla\delta{b}\|_{L^3}&\lesssim \|\nabla_{\mathcal{A}}\wedge\delta{b}\|_{L^3}+\|\nabla_{\mathcal{A}}\cdot\delta{b}\|_{L^3}\lesssim \|\delta\mathfrak{J}\|_{L^3}+ \|\nabla_{\delta\mathcal{A}}\wedge\bar{b}\|_{L^3}\\
&\lesssim \|\delta\mathfrak{J}\|_{L^3}+ \|\delta\mathcal{A}\|_{L^6}\|\nabla\bar{b}\|_{L^6}
     \end{split}
\end{equation*}
and
 \begin{equation*}\label{J-j-L3-4}
\begin{split}
&\|\delta\mathfrak{J}\|_{L^3}\lesssim \|\nabla\delta{b}\|_{L^3}+ \|\delta\mathcal{A}\|_{L^6}\|\nabla\bar{b}\|_{L^6},
     \end{split}
\end{equation*}
which together with \eqref{J-j-L3-2} ensures that
 \begin{equation}\label{J-j-L3-6}
\begin{split}
&\|(\delta\mathfrak{J}, \nabla\delta{b})\|_{L^3}^2\lesssim  \|(\bar{v}, b)\|_{L^6}^2 \|(\nabla\delta{v}, \nabla\delta b)\|_{L^2}^2+\|(\nabla\delta{v}, \nabla\delta{b}))\|_{L^2} \|\partial_t \delta{b}\|_{L^2}\\
&\qquad+ (\|\bar{b}\|_{L^6} \|\bar{v}\|_{L^\infty}
+\|\nabla\bar{b}\|_{L^6} )\|\delta{\mathcal{A}}\|_{L^3}\|\partial_t \delta{b}\|_{L^2}+\|\delta\mathcal{A}\|_{L^6}^2(\|\nabla\bar{b}\|_{L^6}^2+\|\bar{b}\|_{L^6}^2\|\bar{v}\|_{L^{\infty}}^2).
     \end{split}
\end{equation}

By inserting \eqref{J-j-L3-6} into \eqref{diffvb-Lagr-noj-H1-1} and using Young's inequality, we achieve
 \begin{equation}\label{diffvb-Lagr-noj-H1-22}
\begin{split}
\frac{d}{dt}E_{1}(t)&+\frac{3}{2}c_{7} D_{1}(t) \leq C_7 \Bigl(\|(b, \nabla\mathcal{A})\|_{L^3}^2\| \nabla^2\delta{v}\|_{L^2}^2+f_3(t)\|(\nabla \delta{v}, \nabla\delta{b})\|_{L^2}^2\\
&+f_{1}(t)\|(\nabla\delta{v}, \nabla\delta{b}))\|_{L^2}\|(\bar{v}, b)\|_{L^6}+f_{1}(t)\|(\nabla\delta{v}, \nabla\delta{b}))\|_{L^2}^{\frac{1}{2}} \|\partial_t \delta{b}\|_{L^2}^{\frac{1}{2}}\\
&+ f_{1}(t)\bigl(\|\bar{b}\|_{L^6}^{\frac{1}{2}} \|\bar{v}\|_{L^\infty}^{\frac{1}{2}}
+\|\nabla\bar{b}\|_{L^6}^{\frac{1}{2}} \bigr)\|\delta{\mathcal{A}}\|_{L^3}^{\frac{1}{2}}\|\partial_t \delta{b}\|_{L^2}^{\frac{1}{2}}+\mathcal{R}_{1}+\mathcal{R}_{2}+\mathcal{R}_{3}\Bigr),
     \end{split}
\end{equation}
where
 \begin{equation*}\label{diffvbv-Lagr-H1-remain-1}
\begin{split}
&f_3(t)\eqdefa \|\partial_t\mathcal{A}\|_{L^{\infty}}+\|b\|_{L^{\infty}}^2+\|(\nabla\bar{b}, \nabla\bar{v}, \partial_t\mathcal{A})\|_{L^3}^2+\|(\bar{v}, b)\|_{L^6}^2\|\partial_t\mathcal{A}\|_{L^3},\\
& \mathcal{R}_{3}\eqdefa f_{1}(t)\|\delta\mathcal{A}\|_{L^6}(\|\nabla\bar{b}\|_{L^6}+\|\bar{b}\|_{L^6}\|\bar{v}\|_{L^{\infty}})\\
&\qquad + \bigl(\|\bar{b}\|_{L^6}^2 \|\bar{v}\|_{L^\infty}^2
+\|\nabla\bar{b}\|_{L^6}^2 \bigr)\bigl(\|\delta{\mathcal{A}}\|_{L^3}^2\|\partial_t\mathcal{A}\|_{L^3}^2+\|\delta\mathcal{A}\|_{L^6}^2\|\partial_t\mathcal{A}\|_{L^3}\bigr).
     \end{split}
\end{equation*}
Since  $C_7 \|b\|_{L^{\infty}_t(L^3)}^2 \leq C_8\|(u_0, B_0)\|_{\dot{H}^{\frac{1}{2}}}^2\leq \frac{1}{2} c_7$
if $\|(u_0, B_0)\|_{\dot{H}^{\frac{1}{2}}}$ small enough, from which and \eqref{diffvb-Lagr-noj-H1-22}, we deduce that
 \begin{equation}\label{diffvb-Lagr-noj-H1-23}
\begin{split}
\frac{d}{dt}E_{1}(t)&+ c_{7} D_{1}(t) \leq C_7 \Bigl(\|\nabla\mathcal{A}\|_{L^3}^2\| \nabla^2\delta{v}\|_{L^2}^2+f_3(t)\|(\nabla \delta{v}, \nabla\delta{b})\|_{L^2}^2\\
&+f_{1}(t)\|(\nabla\delta{v}, \nabla\delta{b}))\|_{L^2}\|(\bar{v}, b)\|_{L^6}+f_{1}(t)\|(\nabla\delta{v}, \nabla\delta{b}))\|_{L^2}^{\frac{1}{2}} \|\partial_t \delta{b}\|_{L^2}^{\frac{1}{2}}\\
&+ f_{1}(t)\bigl(\|\bar{b}\|_{L^6}^{\frac{1}{2}} \|\bar{v}\|_{L^\infty}^{\frac{1}{2}}
+\|\nabla\bar{b}\|_{L^6}^{\frac{1}{2}}\bigr)\|\delta{\mathcal{A}}\|_{L^3}^{\frac{1}{2}}\|\partial_t \delta{b}\|_{L^2}^{\frac{1}{2}}+\mathcal{R}_{1}+\mathcal{R}_{2}+\mathcal{R}_{3}\Bigr).
     \end{split}
\end{equation}
Thanks to \eqref{H1-energy-cont-3}, we know that, there is a positive constant $C$ satisfying
\begin{equation*}\label{equiv-E1-1}
\begin{split}
& C^{-1}\|(\nabla\delta{v}, \delta\mathfrak{J}, \nabla\delta{b}, \delta\mathcal{A}\otimes\nabla\bar{b})(t)\|_{L^2}^2 \leq E_{1}(t) \leq C  \|(\nabla\delta{v}, \delta\mathfrak{J}, \nabla\delta{b}, \delta\mathcal{A}\otimes\nabla\bar{b})(t)\|_{L^2}^2
     \end{split}
\end{equation*}
for any $t \in [0, T^{*}[$.

Let
\begin{equation*}\label{def-diff-energy-total-1}
\begin{split}
& \mathfrak{E}(t)\eqdefa \|(\nabla\delta{v}, \delta\mathfrak{J}, \nabla\delta{b}, \delta\mathcal{A}\otimes\nabla\bar{b})\|_{L^{\infty}_t(L^2)}^2+\|(\partial_t\delta{v}, \nabla\delta{q}, \nabla^2\delta{v}, \partial_t\delta{b})\|_{L^2_t(L^2)}^2,
     \end{split}
\end{equation*}
we then get by integrating \eqref{diffvb-Lagr-noj-H1-23} over $[0,T]$ that
 \begin{equation}\label{est-E-unique-1}
\begin{split}
\mathfrak{E}(T)\leq & C  \Bigl(\|\nabla\mathcal{A}\|_{L^{\infty}_T(L^3)}^2\| \nabla^2\delta{v}\|_{L^2_T(L^2)}^2+\|f_3(t)\|_{L^1_T}\|(\nabla \delta{v}, \nabla\delta{b})\|_{L^{\infty}_T(L^2)}^2\\
&+\sum_{i=1}^3\|\mathcal{R}_{i}\|_{L^1_T}\Bigr)+C \|f_{1}\|_{L^{\frac{4}{3}}_T}\Bigl(\|(\nabla \delta{v}, \nabla\delta{b})\|_{L^{\infty}_T(L^2)}\,\|(\bar{v}, b)\|_{L^4_T(L^6)}\\
&+\|(\nabla \delta{v}, \nabla\delta{b})\|_{L^{\infty}_T(L^2)}^{\frac{1}{2}} \|\partial_t \delta{b}\|_{L^2_T(L^2)}^{\frac{1}{2}} +\bigl(\|t^{\frac{1}{4}}\bar{b}\|_{L^{\infty}_T(L^6)}^{\frac{1}{2}} \|t^{\frac{1}{2}}\bar{v}\|_{L^{\infty}_T(L^\infty)}^{\frac{1}{2}}\\
&+\|t^{\frac{3}{4}}\nabla\bar{b}\|_{L^6}^{\frac{1}{2}} \bigr)\|t^{-\frac{3}{4}} \delta\mathcal{A} \|_{L^{\infty}_T(L^3)} ^{\frac{1}{2}}\|\partial_t \delta{b}\|_{L^2_T(L^2)}^{\frac{1}{2}}\Bigr).
     \end{split}
\end{equation}
Thanks to \eqref{est-Lagr-Euler-1}, we have
 \begin{equation*}\label{est-A-int-1}
\begin{split}
&\|\nabla\mathcal{A}\|_{L^3}\lesssim \int_0^t\|\nabla^2 v(\tau)\|_{L^{3}}\,d\tau,\, \|\delta\mathcal{A}(t)\|_{L^6}\lesssim\|\nabla\delta\mathcal{A}(t)\|_{L^2}\lesssim
\int_0^t\|\nabla^2\delta{v}(\tau)\|_{L^2}\,d\tau,\\
&\|(\delta\mathcal{A}(t),\,\nabla\delta\xi(t))\|_{L^2}\lesssim  \int_0^t\|\nabla \delta{v}(\tau)\|_{L^2}\,d\tau, \quad\|\delta\mathcal{A}\|_{L^3}\lesssim  \int_0^t\|\nabla \delta{v}(\tau)\|_{L^3}\,d\tau,
     \end{split}
\end{equation*}
from which and Hardy's inequality ($\|t^{\alpha-1}\int_0^tf(\tau)d\tau\|_{L^p_T}\lesssim \|t^{\alpha} f(t)\|_{L^p_T}$ ($\forall \,\alpha <1-\frac{1}{p}$, \, $p\in (1, +\infty)$), we infer
 \begin{equation*}\label{est-A-int-2}
\begin{split}
& \|\nabla\mathcal{A}\|_{L^{\infty}_T(L^3)}
\lesssim \int_0^T\|\nabla^2{v}\|_{L^3}\,dt\lesssim\|(u_0, B_0)\|_{\dot{B}^{\frac{1}{2}}_{2, 1}},\, \|t^{-1} (\delta\mathcal{A},\,\nabla\delta\xi)\|_{L^{\infty}_T(L^2)}
\lesssim  \|\nabla \delta{v}(\tau)\|_{L^{\infty}_T(L^2)},\\
&\|t^{-\frac{1}{2}}\delta\mathcal{A}\|_{L^{\infty}_T(L^6)}
\lesssim \|t^{-\frac{1}{2}}\nabla\delta\mathcal{A}\|_{L^{\infty}_T(L^2)}
\lesssim \|t^{-\frac{1}{2}}\int_0^t\|\nabla^2\delta{v}(\tau)\|_{L^2}\,d\tau\|_{L^{\infty}_T}\lesssim
\|\nabla^2\delta{v}\|_{L^2_T(L^2)},\\
&\|t^{-\frac{3}{4}} \delta\mathcal{A} \|_{L^{\infty}_T(L^3)}
\lesssim  \|t^{-\frac{3}{4}}\int_0^t\|\nabla \delta{v}(\tau)\|_{L^3}\,d\tau\|_{L^{\infty}_T}\lesssim \|\nabla \delta{v}\|_{L^{4}_T(L^3)}
\end{split}
\end{equation*}
and
\begin{equation*}\label{est-A-int-3}
\begin{split}
&\|t^{-1}\delta\mathcal{A}\|_{L^2_T(L^6)}
\lesssim
\|\nabla^2\delta{v}\|_{L^2_T(L^2)}, \quad \|t^{-\frac{3}{4}}\delta\mathcal{A}\|_{L^{4}_T(L^6)}\lesssim
\|t^{\frac{1}{4}}\nabla^2\delta{v}\|_{L^4_T(L^2)}.
\end{split}
\end{equation*}
As a result, it comes out
 \begin{equation*}
\begin{split}
&\|t^{-1}\delta\mathcal{A}\|_{L^2_T(L^6)}+\|t^{-\frac{3}{4}}\delta\mathcal{A}\|_{L^{4}_T(L^6)}+
\|t^{-\frac{1}{2}}\delta\mathcal{A}\|_{L^{\infty}_T(L^6)}\\
&\qquad +\|t^{-1} (\delta\mathcal{A},\,\nabla\delta\xi)\|_{L^{\infty}_T(L^2)}+\|t^{-\frac{3}{4}} \delta\mathcal{A} \|_{L^{\infty}_T(L^3)} \lesssim (\mathfrak{E}(T))^{\frac{1}{2}}.
\end{split}
\end{equation*}
Thanks to the fact that
 \begin{equation*}
 \begin{split}
\int_0^T \| \nabla\bar{b}\|_{L^6}^{\frac{4}{3}}\,dt&=\int_0^T t^{-\frac{2}{3}}\,\|t^{\frac{1}{2}}\nabla\bar{b}\|_{L^6}^{\frac{4}{3}}\,dt\lesssim \|t^{-\frac{2}{3}}\|_{L^{\frac{3}{2}, \infty}} \|t^{\frac{1}{2}}\nabla\bar{b}\|_{L^{4, \frac{4}{3}}_T(L^6)}^{\frac{4}{3}}\leq C \|(u_0, B_0)\|_{\dot{B}^{\frac{1}{2}}_{2, 1}},
     \end{split}
 \end{equation*}
 we get
  \begin{equation*}
\begin{split}
\|f_1\|_{L^{\frac{4}{3}}_T} \leq & C\Bigl(\|\nabla\bar{v}\|_{L^2_T(L^{3})}\|t^{\frac{1}{2}}\nabla\bar{b}\|_{L^4_T(L^6)}\|t^{-\frac{1}{2}}\delta\mathcal{A}\|_{L^{\infty}_T(L^6)}+ \| \nabla\bar{b}\|_{L^{\frac{4}{3}}_T(L^6)} \|\nabla\delta{v}\|_{L^{\infty}_T(L^2)}  \\
& + \|t^{\frac{1}{2}}\nabla\bar{b}\|_{L^4_T(L^6)} \|t^{-1}\nabla\delta\xi\|_{L^{\infty}_T(L^2)} \|t^{\frac{1}{2}}\nabla{v}\|_{L^2_T(L^{\infty})}\\
&+\|t^{-\frac{3}{4}}\delta\mathcal{A}\|_{L^{4}_T(L^6)} \|t^{\frac{3}{4}}\nabla\partial_t\bar{b}\|_{L^2_T(L^2)}\Bigr)\leq L(T)\,\mathfrak{E}(T)^{\frac{1}{2}},
     \end{split}
\end{equation*}
here and in what follows, the continuous function $L(T)$ satisfies that $L(T)\rightarrow 0$ as $T\rightarrow 0^{+}$.

Along the same line, we obtain
\begin{equation*}\label{est-f-1-16}
\begin{split}
\|f_3(t)\|_{L^{1}_T} \leq C \bigl(&\|\nabla{v}\|_{L^1_T(L^{\infty})}+\|b\|_{L^2_T(L^\infty)}^2 \\
&+\|(\nabla\bar{b}, \nabla{b}, \nabla\bar{v}, \nabla{v})\|_{L^2_T(L^3)}^2(1+\|(\bar{v}, b)\|_{L^{\infty}_T(L^3)})\bigr)\leq L(T),
     \end{split}
\end{equation*}
and
\begin{align*}
 \|\mathcal{R}_{1}\|_{L^{1}_T} \leq& C   \|t^{\frac{1}{2}}\nabla({v}, \bar{v})\|_{L^2_T(L^{\infty})}^2 \bigl(\|t^{-1}\nabla\delta\xi\|_{L^{\infty}_T(L^2)}^2 \|t^{\frac{1}{2}}\nabla\bar{v}\|_{L^{\infty}_T(L^3)}^2\\
 &\qquad \qquad \qquad \qquad \qquad
 +\|t^{-\frac{1}{2}}\nabla\delta\mathcal{A}\|_{L^{\infty}_T(L^2)}^2 +\|t^{-\frac{1}{2}}\delta\mathcal{A}\|_{L^{\infty}_T(L^6)}^2 \|\nabla^2\bar{\xi}\|_{L^{\infty}_T(L^3)}^2\bigr)\\
  & + C\|t^{-\frac{3}{4}}\delta\mathcal{A}\|_{L^{\infty}_T(L^{3})}^2 \bigl(\|t^{\frac{3}{4}}(\partial_t\bar{v}, \nabla\bar{q}, \nabla\nabla_{\bar{\mathcal{A}}}\bar{v})\|_{L^2_T(L^6)}^2 + \|t^{\frac{1}{2}}b\|_{L^{\infty}_T(L^{\infty})}^2 \|t^{\frac{1}{4}}\nabla\bar{b}\|_{L^2_T(L^{6})}^2\bigr)\\
  \leq&  L(T)\,\mathfrak{E}(T),
     \end{align*}
\begin{align*}
\|\mathcal{R}_{2}\|_{L^{1}_T} \leq &C  \|t^{-\frac{1}{2}}\delta\mathcal{A}\|_{L^{\infty}_T(L^6)}^2 \bigl(\|b\|_{L^{\infty}_T(L^3)}^2  \|t^{\frac{1}{2}}\nabla\bar{v}\|_{L^2_T(L^{\infty})}^2 +\|t^{\frac{1}{4}}\nabla\bar{b}\|_{L^{2}_T(L^6)} \|t^{\frac{3}{4}}\nabla\partial_t\bar{b}\|_{L^{2}_T(L^2)}\bigr) \\
&+C \|t^{\frac{1}{4}}\nabla\bar{b}\|_{L^2_T(L^6)}^2  \|t^{-\frac{1}{2}}\delta\mathcal{A}\|_{L^{\infty}_T(L^6)} \|\partial_t\delta\mathcal{A}\|_{L^{\infty}_T(L^2)}\leq L(T)\,\mathfrak{E}(T),
      \end{align*}
and
 \begin{align*}
\|\mathcal{R}_{3}\|_{L^{1}_T} \leq &C \|f_{1}(t)\|_{L^{\frac{4}{3}}_T}\|t^{-\frac{3}{4}}\delta\mathcal{A}\|_{L^4_T(L^6)}) \bigl(\|t^{\frac{3}{4}}\nabla\bar{b}\|_{L^{\infty}_TL^6)}+\|t^{\frac{1}{4}}\bar{b}\|_{L^{\infty}_T(L^6)}
\|t^{\frac{1}{2}}\bar{v}\|_{L^{\infty}_T(L^{\infty})}\bigr)\\
&+C \bigl(\|t^{\frac{3}{4}}\nabla\bar{b}\|_{L^{\infty}_TL^6)}^2+\|t^{\frac{1}{4}}\bar{b}\|_{L^{\infty}_T(L^6)}^2
\|t^{\frac{1}{2}}\bar{v}\|_{L^{\infty}_T(L^{\infty})}^2\bigr)\\
&\,\times(\|t^{-\frac{3}{4}}\delta{\mathcal{A}}\|_{L^{\infty}_T(L^3)}^2 \|\nabla{v}\|_{L^2_T(L^3)}^2 + \|t^{-\frac{3}{4}}\delta\mathcal{A}\|_{L^4_T(L^6)}^2  \|\nabla{v}\|_{L^2_T(L^3)})
\leq  L(T)\,\mathfrak{E}(T).
    \end{align*}
By substituting the above estimates into
 \eqref{est-E-unique-1}, we obtain
 \begin{equation*}
\begin{split}
&\mathfrak{E}(T) \leq L(T)\,\mathfrak{E}(T).
     \end{split}
\end{equation*}
Taking $T_0>0$ to be so small that $L(T_0)\leq \frac{1}{2}$, we obtain
  \begin{equation*}
\begin{split}
&\mathfrak{E}(T_0) \leq \frac{1}{2}\mathfrak{E}(T_0),
     \end{split}
\end{equation*}
which implies that $\mathfrak{E}(T_0)=0$.

Therefore, we obtain $\delta{v}(t) = \delta{b}(t) =\nabla \delta\Pi(t)\equiv 0$ for any $t \in [0, T_0]$. The uniqueness of such strong solutions on the whole time interval $[0, +\infty)$ then follows by a bootstrap argument.
This ends the proof of Theorem \ref{mainthm-GWP}. \end{proof}

\appendix
\setcounter{equation}{0}
\section{Tool box on Littlewood-Paley theory and Lorentz spaces}\label{appendix-sect-5}

The proof  of the main results in this paper  requires Littlewood-Paley
theory. For the convenience of readers, we briefly explain how it may be built in the
case $x\in\R^3$ (see e.g. \cite{BCD}).

Let  $\varphi(\tau)$ be a smooth function such that
\begin{align*}
&\Supp \varphi \subset \Bigl\{\tau \in \R\,: \, \frac34 <
\tau < \frac83 \Bigr\}\quad\mbox{and}\quad \forall
 \tau>0\,,\ \sum_{j\in\Z}\varphi(2^{-j}\tau)=1.
\end{align*}
we define the homogeneous  dyadic operators as follows:
for $u\in{\mathcal S}_h'$ and any $ j\in\Z,$
\begin{equation}\label{LP-decom-sum-1}
  \begin{aligned}
&\dot\Delta_ju\eqdefa\varphi(2^{-j}|{D}|)u=\cF^{-1}(\varphi(2^{-j}|\xi|)\hat{u})\ \ \hspace{1cm}\mbox{and}
\hspace{1cm}
\dot S_j u\eqdefa\sum_{\ell \leq j-1}\dot\Delta_{\ell}u.
\end{aligned}
\end{equation}
The dyadic operator satisfies the
property of almost orthogonality:
\begin{equation*}
\begin{split}
&\dot\Delta_k\dot\Delta_j u\equiv 0
\quad\mbox{if}\quad\vert k-j\vert\geq 2
\quad\mbox{and}\quad\dot\Delta_k(\dot S_{j-1}u\dot\Delta_j u)
\equiv 0\quad\mbox{if}\quad\vert k-j\vert\geq 5.
\end{split}
\end{equation*}

\begin{defi}[see {\cite[Subsection 2.3]{BCD}}] \label{def2.1}
Let $(p,q)\in[1,+\infty]^2,$ $s\in\R$ and $u\in{\mathcal
S}_h'(\R^3)$, we set
$$
\|u\|_{{\dot{B}}^s_{p,q}}\eqdefa \big\|2^{js}\|{\dot\Delta_j u}\|_{L^{p}}\big\|_{\ell^{q}(\Z)}.
$$

$\bullet$ For $s<\frac{3}{p}$ (or $s=\frac{3}{p}$ if $q=1$), we define $
\dot{B}^s_{p,q}(\R^3)\eqdefa \big\{u\in{\mathcal S}_h'(\R^3)\;\big|\; \|
u\|_{{\dot{B}^s_{p,q}}}<\infty\big\}.$

$\bullet$ If $k\in\N$ and $\frac{3}{p}+k\leq s<\frac{3}{p}+k+1$ (or
$s=\frac{3}{p}+k+1$ if $r=1$), then $\dot{B}^s_{p,q}(\R^3)$ is defined as
the set of distributions $u\in{\mathcal S}_h'(\R^3)$ such that
$\partial^\beta u\in \dot B^{s-k}_{p,q}(\R^3)$ whenever $|\beta|=k.$
\end{defi}

We also recall Bernstein's inequality from  \cite{BCD}:

\begin{lem}\label{lem2.1}
{\sl Let $\mathcal{B}\eqdefa \{
\xi\in\R^3,\ |\xi|\leq\frac{4}{3}\}$ be a ball   and $\mathcal{C}\eqdefa \{
\xi\in\R^3,\frac{3}{4}\leq|\xi|\leq\frac{8}{3}\}$ a ring.
 A constant $C$ exists so that for any positive real number $\lambda,$ any nonnegative
integer $k,$ any smooth homogeneous function $\sigma$ of degree $m$,
any couple of real numbers $(a, \; b)$ with $ b \geq a \geq 1$, and any function $u$ in $L^a$,
there hold
\begin{equation}
\begin{split}
&\Supp \hat{u} \subset \lambda \mathcal{B} \Rightarrow
\sup_{|\alpha|=k} \|\pa^{\alpha} u\|_{L^{b}} \leq  C^{k+1}
\lambda^{k+ d\left(\frac{1}{a}-\frac{1}{b} \right)}\|u\|_{L^{a}},\\
& \Supp \hat{u} \subset \lambda \mathcal{C} \Rightarrow
C^{-1-k}\lambda^{ k}\|u\|_{L^{a}}\leq
\sup_{|\alpha|=k}\|\partial^{\alpha} u\|_{L^{a}}\leq
C^{1+k}\lambda^{ k}\|u\|_{L^{a}},\\
& \Supp \hat{u} \subset \lambda \mathcal{C} \Rightarrow \|\sigma(D)
u\|_{L^{b}}\leq C_{\sigma, m} \lambda^{ m+d\left(\frac{1}{a}-\frac{1}{b}
\right)}\|u\|_{L^{a}}, \end{split}\label{2.1}
\end{equation}}
with $\sigma(D)
u\eqdefa\mathcal{F}^{-1}(\sigma\,\hat{u})$.
\end{lem}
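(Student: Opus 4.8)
The plan is to derive all three estimates from a single device: represent the frequency-localized function as a convolution against a fixed Schwartz kernel rescaled by $\lambda$, and then apply Young's convolution inequality, reading off the powers of $\lambda$ from the scaling of the kernel. To set this up I fix, once and for all, two cut-off functions $\phi,\chi\in C_c^\infty(\R^d)$ with $\phi\equiv1$ on a neighbourhood of the ball $\mathcal B$ and $\Supp\phi$ contained in a slightly larger ball, and $\chi\equiv1$ on a neighbourhood of the annulus $\mathcal C$ with $\Supp\chi$ contained in a slightly larger annulus bounded away from the origin. The only algebraic input is the scaling relation $\cF^{-1}[\phi(\cdot/\lambda)](x)=\lambda^d g(\lambda x)$ for $g\eqdefa\cF^{-1}\phi$, which dictates how each convolution kernel scales.

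For the first inequality, since $\Supp\widehat u\subset\lambda\mathcal B$ one has $\widehat u(\xi)=\phi(\xi/\lambda)\widehat u(\xi)$, hence $\partial^\alpha u=(\partial^\alpha g_\lambda)*u$ with $g_\lambda(x)=\lambda^d g(\lambda x)$. Choosing $c\in[1,\infty]$ by $1+\frac1b=\frac1a+\frac1c$, which is admissible precisely because $b\ge a$ forces $c\ge1$, Young's inequality gives $\|\partial^\alpha u\|_{L^b}\le\|\partial^\alpha g_\lambda\|_{L^c}\|u\|_{L^a}$; a direct computation of $\|\partial^\alpha g_\lambda\|_{L^c}$ from the scaling yields the factor $\lambda^{|\alpha|+d(1/a-1/b)}$, leaving the constant $\|\partial^\alpha g\|_{L^c}$. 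The third inequality is identical in spirit: setting $\widetilde\sigma\eqdefa\sigma\chi$, which is smooth and compactly supported because $\sigma$ is smooth off the origin and homogeneous of degree $m$, homogeneity gives $\sigma(\xi)\chi(\xi/\lambda)=\lambda^m\widetilde\sigma(\xi/\lambda)$, so $\sigma(D)u=\lambda^m\,\cF^{-1}[\widetilde\sigma(\cdot/\lambda)]*u$ and Young's inequality supplies the factor $\lambda^{m+d(1/a-1/b)}$ together with the constant $C_{\sigma,m}=\|\cF^{-1}\widetilde\sigma\|_{L^c}$.

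The upper bound in the second inequality is just the case $a=b$ of the first, since $\mathcal C$ lies in a fixed dilate of $\mathcal B$. For the lower bound I exploit that $|\xi|$ is bounded below on $\lambda\mathcal C$. Put $P(\xi)\eqdefa\sum_{|\beta|=k}\xi^{2\beta}$, homogeneous of degree $2k$ and comparable to $|\xi|^{2k}$; since $P\ne0$ on $\Supp\chi(\cdot/\lambda)$ I may insert $P/P$ to get the reproducing identity $\widehat u=\chi(\cdot/\lambda)\widehat u=\sum_{|\alpha|=k}\lambda^{-k}m_\alpha(\cdot/\lambda)\,(\xi^\alpha\widehat u)$, where $m_\alpha\eqdefa\chi\,\xi^\alpha/P$ is smooth and compactly supported and the $\lambda^{-k}$ comes from the scaling of the homogeneous factor $\xi^\alpha/P$ of degree $-k$. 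As $\xi^\alpha\widehat u$ equals, up to the unimodular constant $(-i)^{|\alpha|}$, the Fourier transform of $\partial^\alpha u$, this writes $u$ as a sum over $|\alpha|=k$ of convolutions of $\partial^\alpha u$ against kernels $\cF^{-1}[m_\alpha(\cdot/\lambda)]$; Young's inequality with $c=1$ then gives $\lambda^k\|u\|_{L^a}\le C^{1+k}\sup_{|\alpha|=k}\|\partial^\alpha u\|_{L^a}$.

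The genuine difficulty is not the structure of the argument but the \emph{uniformity} of the constant, namely that the kernel norms $\|\partial^\alpha g\|_{L^c}$ and $\|\cF^{-1}m_\alpha\|_{L^1}$ grow at most like $C^{|\alpha|+1}$ and that the number of multi-indices with $|\alpha|=k$ is at most $C^k$. The latter is the elementary bound $\binom{k+d-1}{d-1}\le C^k$. For the former I would control each kernel by a weighted sup-norm, $\|K\|_{L^1}\lesssim\sup_x(1+|x|)^{d+1}|K(x)|$, and pass through the Fourier transform to $L^1_\xi$ bounds on $(1-\Delta_\xi)^{N}$ applied to the fixed symbols $\xi^\alpha\phi$ or $m_\alpha$, with $N=\lceil(d+1)/2\rceil$. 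Because $\phi,\chi$ are fixed and their derivatives only meet a polynomial factor of degree $k$ on a fixed compact set, Leibniz's rule produces at most geometric growth in $k$, which is exactly the asserted $C^{k+1}$. This is the one place where the $k$-dependence must be tracked explicitly rather than absorbed into a generic constant.
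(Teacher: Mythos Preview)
The paper does not prove this lemma; it merely recalls it as a standard fact from \cite{BCD}. Your argument is correct and is precisely the classical proof---representing the localized function as a convolution with a rescaled Schwartz kernel and applying Young's inequality, with the lower bound obtained by inverting the polynomial $\sum_{|\alpha|=k}\xi^{2\alpha}$ on the annulus---that one finds in that reference.
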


In  order to obtain a better description of the regularizing effect
of the transport-diffusion equation, we shall use Chemin-Lerner type
norm from
\cite{CL}.
\begin{defi}\label{def2.2}
{\sl Let $s\in\R$,
$r,\lambda, p\in [1,+\infty]$ and $T>0,$
 we define
\begin{equation*}
\|u\|_{\widetilde L^\lambda_T(\dot B^s_{p,r})}\eqdefa\Big\|2^{js}\|\dot\Delta_j
u\|_{L^\lambda_T(L^{p})}\Big\|_{\ell ^{r}(\mathbb{Z})}.
\end{equation*}
}
\end{defi}

Before introducing Lorentz space, we begin by recalling the rearrangement  of a function. For a measurable function $f,$ we define its non-increasing rearrangement (see \cite{G14} for instance)  by
 $f^{*}:\R_+\to \R_+$ via
$$
f^{*}(\lambda)\eqdefa \inf\big\{s\geq0;\,
\big|\{x:\ |f(x)|>s\}\ \big|\leq\lambda\big\},
$$
where $\big|\{\ x\in\R^3:\,|f(x)|>s\ \}\big|$ denotes the Lebesgue measure of the set $\{\ x\in\R^3:\,|f(x)|>s\ \}.$

\begin{defi} (Lorentz spaces)\label{espace_lorentz}
Let $f$ a mesurable function and
$1\leq p,q\leq\infty.$
Then $f$ belongs to the Lorentz space $L^{p,q}$ if
\begin{displaymath}
\|f\|_{L^{p,q}}\overset{def}{=}
\begin{cases}
            \Big( \int^\infty_0(t^{1\over p}f^*(t))^q{dt\over t}\Big)
            ^{1\over q}<\infty&
\text{if $q<\infty$}\\
             \displaystyle\sup_{t>0}\bigl(t^{1\over p}f^*(t)\bigr)<\infty &\text
             {if $q=\infty$}.
        \end{cases}
\end{displaymath}
\end{defi}
Alternatively, we can also define the Lorentz spaces by the real interpolation, as the interpolation between the Lebesgue spaces~:
$$
L^{p,q}\eqdefa (L^{p_0},L^{p_1})_{(\theta,q)},
$$
with $1\le p_0<p<p_1\le\infty,$ $0<\theta<1$ satisfying
${1\over p}={1-\theta\over p_0}+{\theta\over p_1}$ and $1\leq q\leq\infty,$ also $f\in L^{p,q}$ if the following quantity
$$
\|f\|_{L^{p,q}}\eqdefa
\Big(\int_0^\infty\big(t^{-\theta}K(t,f)\big)^q
{dt\over t}\Big)^{1\over q}
$$
is finite with
$$
K(f,t)\eqdefa\displaystyle\inf_{f=f_0+f_1}
\big\{\ \|f_0\|_{L^{p_0}}+t\|f_1\|_{L^{p_1}}\;\,\big|
\;f_0\in L^{p_0},\,f_1\in L^{p_1}\ \big\}.
$$

The Lorentz spaces verify the following properties
(see  \cite{lema,ON} for more details)~:

\begin{prop}\label{Neil}
{\sl Let $f\in L^{p_1,q_1},$ $g\in L^{p_2,q_2}$ and
$1\leq p,q,p_j,q_j\leq\infty,$ for $1\leq j\leq2.$
\vspace{0,5cm}

\begin{itemize}
\item[(1)] If $1<p<\infty$ and $1\le q\le\infty,$ then
$$
\|fg\|_{L^{p,q}}
\lesssim
\|f\|_{L^{p,q}}\|g\|_{L^{\infty}}.
$$

\item[(2)]
If ${1\over p}={1\over p_1}+{1\over p_2}$ and
${1\over q}={1\over q_1}+{1\over q_2},$ then
$$
\|fg\|_{L^{p,q}}
\lesssim
\|f\|_{L^{p_1,q_1}}\|g\|_{L^{p_2,q_2}}.
$$

\item[(3)]
For $1\leq p\leq\infty$ and $1\leq q_1\leq q_2\leq\infty,$ we have
$$
L^{p,q_1}\hookrightarrow L^{p,q_2}
\hspace{1cm}\mbox{and}\hspace{1cm}L^{p,p}=L^p.
$$
\end{itemize}}
\end{prop}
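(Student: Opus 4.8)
The plan is to reduce all three assertions to elementary manipulations of the non-increasing rearrangement $f^*$, so that the only analytic input is the behaviour of $f^*$ under products and the monotone structure of the defining integral $\int_0^\infty(t^{1/p}f^*(t))^q\,\frac{dt}{t}$. The single nontrivial ingredient is the pointwise rearrangement inequality for products,
\[
(fg)^*(t)\leq f^*(t/2)\,g^*(t/2)\qquad(t>0),
\]
which I would establish first. It follows from the set inclusion $\{|fg|>\lambda_1\lambda_2\}\subset\{|f|>\lambda_1\}\cup\{|g|>\lambda_2\}$, giving $|\{|fg|>\lambda_1\lambda_2\}|\le|\{|f|>\lambda_1\}|+|\{|g|>\lambda_2\}|$; choosing $\lambda_1=f^*(t/2)$ and $\lambda_2=g^*(t/2)$ and invoking the right-continuity of $f^*$ yields the claim.

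With this in hand, parts (1) and (2) are immediate. For (2), I insert the product inequality, perform the change of variables $s=t/2$ (which only costs a harmless factor $2^{1/p}$, since $\frac{dt}{t}$ is scale invariant), split $s^{q/p}=s^{q/p_1}s^{q/p_2}$ using $\frac1p=\frac1{p_1}+\frac1{p_2}$, and then apply H\"older's inequality in the measure $\frac{ds}{s}$ with the conjugate exponents $q_1/q$ and $q_2/q$, which are admissible precisely because $\frac1q=\frac1{q_1}+\frac1{q_2}$. This produces $\|fg\|_{L^{p,q}}\lesssim\|f\|_{L^{p_1,q_1}}\|g\|_{L^{p_2,q_2}}$. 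Part (1) is the special case $g\in L^\infty$: here $g^*(t/2)\le\|g\|_{L^\infty}$ pointwise, so no H\"older step is needed and the same change of variables gives the bound directly.

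For part (3) I would use only that $f^*$ is non-increasing. Integrating $(s^{1/p}f^*(s))^{q_1}$ over $[0,t]$ and bounding $f^*(s)\ge f^*(t)$ on that interval yields $t^{1/p}f^*(t)\le(q_1/p)^{1/q_1}\|f\|_{L^{p,q_1}}$, that is, the endpoint embedding $L^{p,q_1}\hookrightarrow L^{p,\infty}$. The general nesting $L^{p,q_1}\hookrightarrow L^{p,q_2}$ for $q_1\le q_2<\infty$ then follows by writing $(t^{1/p}f^*(t))^{q_2}=(t^{1/p}f^*(t))^{q_2-q_1}(t^{1/p}f^*(t))^{q_1}$, estimating the first factor by the $L^{p,\infty}$ bound just obtained and integrating the second. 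The identity $L^{p,p}=L^p$ is read off directly from the definition, since $(t^{1/p}f^*(t))^p\,\frac{dt}{t}=f^*(t)^p\,dt$ and $f$, $f^*$ are equimeasurable.

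The computations are entirely routine; the only points demanding care are the endpoint cases $q=\infty$ or $q_j=\infty$, where the integral definition of $\|\cdot\|_{L^{p,q}}$ is replaced by the supremum $\sup_{t>0}t^{1/p}f^*(t)$. In those cases the H\"older step in (2) degenerates (one factor is extracted as a supremum) and the change of variables argument carries over verbatim, so I expect no genuine obstacle beyond bookkeeping. Since these are the classical Lorentz-space inequalities, I would alternatively remark that (1)--(3) may simply be quoted from the references \cite{lema,ON} already cited; the self-contained argument above is retained only for completeness.
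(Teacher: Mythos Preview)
Your proposal is correct; the pointwise product inequality $(fg)^*(t)\le f^*(t/2)g^*(t/2)$ together with H\"older in $\frac{ds}{s}$ and the monotonicity argument for the embeddings is precisely the standard route, and the endpoint bookkeeping you flag causes no trouble. The paper itself does not supply a proof of this proposition at all: it is stated in the appendix as a toolbox fact and simply referred to the literature \cite{lema,ON}, exactly as you suggest in your final remark. So your self-contained sketch is in fact more than the paper provides.
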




\noindent {\bf Acknowledgments.}
 G. Gui is supported in part by National Natural Science Foundation of China under Grants 12371211 and 12126359.
 P. Zhang is partially  supported by National Key R$\&$D Program of China under grant 2021YFA1000800 and by National Natural Science Foundation of China under Grants 12421001, 12494542 and 12288201.

 \vskip 0.4cm

{\bf Data Availability} The authors confirm that this manuscript has no associated data.
 \vskip 0.3cm
{\bf \large Declarations}
 \vskip 0.3cm

{\bf Conflict of interest} The authors state that there is no conflict of interest.


 \vskip 0.4cm

\end{document}